\documentclass[pdflatex,sn-mathphys-num
]{sn-jnl}
\usepackage[
]{hyperref}
\hypersetup{hypertexnames=false}
\usepackage{graphicx}%
\usepackage{multirow}%
\usepackage{amsmath,amssymb,amsfonts}%
\usepackage{amsthm}%
\usepackage{mathrsfs}%
\usepackage[title]{appendix}%
\usepackage{xcolor}%
\usepackage{textcomp}%
\usepackage{manyfoot}%
\usepackage{booktabs}%
\usepackage{listings}%

\usepackage{xspace,url,color,
underscore,import,diagbox}
\usepackage[shortlabels]{enumitem}
\usepackage{anyfontsize}
\theoremstyle{thmstyleone}%
\newtheorem{theorem}{Theorem}[section]

\newtheorem{lemma}[theorem]{Lemma}
\newtheorem{proposition}[theorem]{Proposition}

\theoremstyle{thmstyletwo}%
\newtheorem{example}{Example}[section]
\newtheorem{remark}{Remark}[section]

\theoremstyle{thmstylethree}%
\newtheorem{definition}{Definition}[section]

\DeclareUnicodeCharacter{00A0}{ }
\DeclareMathOperator*{\argmin}{arg\,min}
\DeclareMathOperator{\Ima}{Im}
\DeclareMathOperator{\dom}{dom}
\DeclareMathOperator{\ri}{ri}
\DeclareMathOperator{\inte}{int}
\DeclareMathOperator{\aff}{aff}
\DeclareMathOperator{\conv}{conv}
\DeclareMathOperator{\cl}{cl}
\DeclareMathOperator{\lip}{lip}
\DeclareMathOperator{\diag}{diag}

\DeclareMathOperator{\cone}{cone}
\DeclareMathOperator{\spn}{span}
\DeclareMathOperator{\gph}{gph}

\begin{document}
\raggedbottom

\title[VU-calculus and the $\mathcal{U}$-Hessian]
{A $\mathcal{VU}$-calculus for composite functions and the $\mathcal{U}$-Hessian of partly smooth functions}


\author*[1]{\fnm{Shuai} \sur{Liu}}\email{shuai0liu@gmail.com}

%

\affil*[1]{\orgdiv{School of Mathematics and Statistics}, \orgname{Nanfang College, Guangzhou}, \orgaddress{\street{No. 882, Wenquan Avenue, Conghua District}, \city{Guangzhou}, \postcode{510970}, \state{Guangdong}, \country{China}}}

%


\abstract{We assemble a finite-dimensional \(\mathcal{VU}\)-calculus for composite
nonsmooth functions whose outer function is convex: the chain rule, separable and
convex sums, a strictly differentiable perturbation of a lower semicontinuous (lsc) term, the
model \(\delta_X+f_0+\theta\circ F\), and a finite maximum of \(C^1\)
functions. The same algebra yields an \(\varepsilon\)-\(\mathcal{VU}\)
chain rule for a proper outer approximation of the subdifferential.
On the set \(\mathcal{R}_{h,F}\) of points at which the convex chain rule
holds, the subspaces \(\mathcal{V}f\) and \(\mathcal{U}f\), an orthonormal
frame of \(\mathcal{U}f\), and the \(\mathcal{U}\)-gradient \(\bar g_u\)
are written in terms of the factors.
If \(f\) is \(C^1\)-partly smooth at \(\bar{x}\), that gradient is
\(\bar g_u=U_f^\top\nabla_{\mathcal{M}}f(\bar{x})\).
If \(f\) is \(C^2\)-partly smooth and \(0\in\ri\partial f(\bar{x})\), the
convex \(\mathcal{U}\)-Hessian \(H_U\) (when \(f\) is convex) or the local
matrix \(H_\varepsilon\) (when \(f\) is prox-regular at \(\bar{x}\) for
\(0\)) equals the Gram matrix
\(U_f^\top\nabla^2_{\mathcal{M}}f(\bar{x})\,U_f\); the same matrix,
written along the active manifold in a continuous frame, depends
continuously on the base point.
If in addition \(f\) is \(C^2\)-partly smooth at \(\bar{x}\), then under
prox-regularity and subdifferential continuity at \(\bar{x}\) for \(0\),
\(H_\varepsilon\succ 0\) is equivalent to tilt stability of \(\bar{x}\)
and to strong metric regularity of \(\partial f\) at \((\bar{x},0)\), with
\(\lip\bigl((\partial f)^{-1}\bigr)(0\mid\bar{x})=\|H_\varepsilon^{-1}\|\).
The calculus and the test are illustrated on a hinge composite and two
elementary tests of \(H_U\succ 0\).}

\keywords{VU-decomposition, U-Hessian, partial smoothness, U-gradient,
tilt stability, strong metric regularity}

\pacs[MSC Classification]{49J52, 49J53, 90C30, 90C25, 49K40}

\maketitle
\raggedbottom
\makeatletter
\def\@textbottom{\vskip \z@ \@plus 1pt}
\makeatother

\section{Introduction}

Nonsmooth objectives that arise as a convex outer function composed with a
smooth inner mapping, as a sum of such terms, or as a finite maximum of
\(C^1\) pieces, are typically nondifferentiable, yet the
nondifferentiability is confined to a proper subspace. The complementary
subspace consists of the directions along which the function looks
\(C^1\), and often \(C^2\). This splitting is the
\(\mathcal{VU}\)-decomposition of Lemar\'echal--Oustry--Sagastiz\'abal
\cite{Lemarechal2000}, later extended to proper lsc functions in
\cite{Liu2020}. Lewis \cite{Lewis2002active} identified the same geometry
with an active manifold of partial smoothness: by normal sharpness, the \(\mathcal{U}\)-space is
the tangent space and the \(\mathcal{V}\)-space is the normal space.
The relative-interior condition \(0\in\ri\partial f(\bar{x})\) enters only
later, for sharpness of the \(\mathcal{V}\)-slice and for the Gram
identification of \(H_U\) or \(H_\varepsilon\) with the covariant Hessian.
Mifflin--Sagastiz\'abal \cite{Mifflin2003pdg} record that identification
for primal-dual gradient structure and relate the \(U\)-Hessian to
second-order epi-derivatives; identifiability of the manifold at
\(\bar{x}\) for \(0\) is Hare--Lewis \cite{Hare2004}.

The subdifferential identity behind the chain rule is that of
Rockafellar and Wets~\cite[Theorem~10.6]{Rockafellar1998}; the
\(\mathcal{VU}\)-space formulae are the image and preimage of that
identity under \(\spn(\partial f-g)\). Separable and convex
sums likewise follow from the corresponding subdifferential rules.
A neighbouring composite class---a positively homogeneous convex outer
function composed with a smooth mapping that sends the reference point to
the origin---is studied in Shapiro \cite{Shapiro2003}.
A chain rule for \(\mathcal{VU}\)-spaces of a composite
\(f=h\circ\Phi\) is already available in
Hare--Planiden--Sagastiz\'abal \cite{Hare2020chain}, under primal-dual
gradient structure and a fast track for \(h\), a \(C^2\) inner mapping,
and transversality of \(\Phi\) to the active manifold of \(h\).
The present formulae use only convexity of the outer function, \(F\) of
class \(C^1\), and the two alternatives in \(\mathcal{R}_{h,F}\); the
\(\mathcal{U}\)-gradient is the coordinate vector
\(\bar g_u=U_f^\top g\) in an orthonormal frame of \(\mathcal{U}f\).
On the second-order side, the \(\mathcal{U}\)-Hessian
\(H_U=\nabla^2\mathcal{L}_U(0)\) (convex case) and the local matrix
\(H_\varepsilon=\nabla^2\mathcal{L}_\varepsilon(0)\) are the reduced
Hessians. For convex partly smooth functions with
\(0\in\ri\partial f(\bar{x})\) one has the Gram identity
\(H_U=U_f^\top\nabla^2_{\mathcal{M}}f(\bar{x})\,U_f\);
a related identification in the bivariate setting of
\cite{Liu2025} is recorded there. The present paper records the same
Gram identity for \(H_\varepsilon\) under prox-regularity.
Positive-definiteness of the covariant Hessian is
equivalent to tilt stability by
Poliquin--Rockafellar \cite{Poliquin1998tilt} and
Lewis--Zhang \cite{lewis2013partial}; the equivalences of that test with
\(H_\varepsilon\succ 0\) and with strong metric regularity of
\(\partial f\), together with the modulus
\(\lip\bigl((\partial f)^{-1}\bigr)(0\mid\bar{x})=\|H_\varepsilon^{-1}\|\),
are recorded in Section~\ref{ss:UH-tilt}.
Quadratic growth of \(\mathcal{L}_U\) at the origin, and its transfer to
\(f\) when \(f\) is convex, is Lemar\'echal--Oustry
\cite{Lemarechal2001Growth}.
Without partial smoothness, tilt stability is the uniform variant of
that growth \cite{Drusvyatskiy2013tilt}; under
\(C^2\)-partial smoothness the two conditions coincide
\cite[Theorem~6.3]{lewis2013partial}.

A \(\mathcal{VU}\)-step needs a frame of \(\mathcal{U}f\) and a
compressed curvature of size \(\dim\mathcal{U}f\). For the composite
models common in structured optimization those objects should be read
from the factors, rather than assembled as a generalized Hessian in
\(\mathbb{R}^n\). Existing composite \(\mathcal{VU}\) formulae either
require primal-dual gradient structure and a fast track, or stop at the
spaces without the frame, the \(\mathcal{U}\)-gradient, or the sign of
the reduced Hessian. The second-order tests used for tilt stability,
strong metric regularity of \(\partial f\), and the superlinear-rate
hypothesis of proximal-gradient \(\mathcal{VU}\) methods are stated in
several languages (covariant Hessian, \(\mathcal{U}\)-Hessian, partial
\(\mathcal{U}\)-Hessian, restricted Hessian on a fast track). Writing
them as one Gram matrix makes the algorithmic hypothesis
\(H_\varepsilon\succ 0\) the same test as tilt stability, and gives the
Lipschitz modulus of \((\partial f)^{-1}\) as \(\|H_\varepsilon^{-1}\|\).
That interface, not a new constraint qualification, is the reason for
the calculus below.

The contribution is therefore one of organisation and of interfaces, not of
new constraint qualifications or of a new characterisation of tilt
stability. Precisely:
\begin{itemize}
\item
a \(\mathcal{VU}\)-calculus for six constructions (chain rule, separable
and convex sums, a strictly differentiable perturbation, the model
\(\delta_X+f_0+\theta\circ F\), and a finite maximum), under convexity of
the outer function, together with the \(\varepsilon\)-\(\mathcal{VU}\)
push-forward of a proper outer approximation of \(\partial h\);
the set \(\mathcal{R}_{h,F}\) only names the two alternatives that give
the convex chain rule, and the finite-maximum formulae are classical;
\item
Gram identification: if \(f\) is \(C^2\)-partly smooth and
\(0\in\ri\partial f(\bar{x})\), then \(H_U\) (convex case) or
\(H_\varepsilon\) (prox-regular case) equals
\(U_f^\top\nabla^2_{\mathcal{M}}f(\bar{x})\,U_f\);
\item
that Gram matrix, recentred along \(\mathcal{M}\), depends continuously
on the base point in a continuous orthonormal frame of
\(T_{\mathcal{M}}\);
\item
under \(C^2\)-partial smoothness, prox-regularity and subdifferential
continuity at \(\bar{x}\) for \(0\), and \(0\in\ri\partial f(\bar{x})\),
\(H_\varepsilon\succ 0\) is equivalent to tilt stability of \(\bar{x}\)
and to strong metric regularity of \(\partial f\) at \((\bar{x},0)\), with
\(\lip\bigl((\partial f)^{-1}\bigr)(0\mid\bar{x})=\|H_\varepsilon^{-1}\|\);
\item
a nonconvex outer function, and an inner mapping that is merely Lipschitz,
are left aside.
\end{itemize}
The single-variable \(\mathcal{U}\)-Lagrangian of Definition~\ref{def:U-Lag} is that
of \cite{Lemarechal2000}, written in the coordinates \(U_fu+V_fw\);
we do not use the intrinsic splitting \(\oplus\).
The \(\varepsilon\)-\(\mathcal{VU}\) identity of
Subsection~\ref{ss:eps-VU} does not treat continuity of the subspaces in
\((x,\varepsilon)\), nor the choice of a small enlargement; those questions
remain those of \cite{Liu2019Subdifferential}.
Implementable \(\mathcal{VU}\)-methods begin with the convex algorithm of
Mifflin--Sagastiz\'abal \cite{Mifflin2005}; later schemes appear in
\cite{Liu2020VUMethods,Liu2025}.
Superlinear convergence and the Dennis--Mor\'e condition are taken up in
Section~\ref{sec:conclude}.
The calculus itself is first-order in the factors and second-order only
at a minimizer with \(0\in\ri\partial f\).

Section~\ref{sec:prelim} fixes the \(\mathcal{VU}\)-decomposition, frames,
the single-variable \(\mathcal{U}\)-Lagrangian and the \(\mathcal{U}\)-Hessian.
Section~\ref{sec:calculus} contains the exact calculus, the first half of a
soft-margin example on frozen features, and the \(\varepsilon\)-\(\mathcal{VU}\)
push-forward at the end of the section.
Section~\ref{sec:ps} records partial smoothness and derivatives along
the manifold (Subsection~\ref{ss:PS}), identifies \(H_U\) and
\(H_\varepsilon\) at \(\bar{x}\) (Subsection~\ref{ss:UH-ident}),
records the same matrix and its continuity along \(\mathcal{M}\)
(Subsection~\ref{ss:UH-along}), and records the
equivalences with tilt stability and quadratic growth
(Subsection~\ref{ss:UH-tilt}); three examples isolate
positive-definiteness and the relative-interior hypothesis.

\section{Preliminaries}
\label{sec:prelim}
This section fixes the \(\mathcal{VU}\)-spaces, frames, the
\(\mathcal{U}\)-gradient, the convex \(\mathcal{U}\)-Lagrangian
\(\mathcal{L}_U\), and the local \(\mathcal{U}\)-Lagrangian
\(\mathcal{L}_\varepsilon\) used in Sections~\ref{sec:calculus} and~\ref{sec:ps}.
We write \(\overline{\mathbb{R}}=\mathbb{R}\cup\{\pm\infty\}\) and
\(\Gamma_0(\mathbb{R}^n)\) for the proper convex lsc functions
\(\mathbb{R}^n\to\overline{\mathbb{R}}\).
The Euclidean unit ball is \(\mathbb{B}\); \(e_i\) is the \(i\)th
canonical vector; \(P_S\) is orthogonal projection onto \(S\).
The indicator and support function of \(S\) are \(\delta_S\) and
\(\sigma_S\). For a set we use the standard hulls
\(\conv\), \(\aff\), \(\ri\),
\(\inte\) and \(\cl\); for a function,
\(\dom f\). For a linear map, \(\Ima A\) and
\(\ker A\). The normal and tangent cones to \(S\) at \(x\in S\) are
\(N_S(x)\) and \(T_S(x)\).
The directional derivative of \(f\) at \(x\) in the direction \(d\) is
\(f'(x;d)\); the limiting subdifferential at \(\bar{x}\) is
\(\partial f(\bar{x})\).
If \(F:\mathbb{R}^n\to\mathbb{R}^m\) is of class \(C^1\), then
\(F'(x)\in\mathbb{R}^{m\times n}\) is the Jacobian; its transpose
\(F'(x)^\top\) is the adjoint \(\mathbb{R}^m\to\mathbb{R}^n\).
For a set \(S\subset\mathbb{R}^m\) the symbol
\(F'(x)^{-1}(S)\) means the preimage
\(\{y\in\mathbb{R}^n:F'(x)y\in S\}\), not a matrix inverse.
The same prime denotes the derivative of a scalar \(C^1\) map; we write
\(\nabla f_i(x)\) when the gradient vector is needed.

The \(\mathcal{VU}\) decomposition was initially defined in \cite{Lemarechal2000} for a finite-valued convex function and later extended to proper lsc functions in \cite[Definition 5]{Liu2020}. 
\begin{definition}[\(\mathcal{VU}\) decomposition]\label{def:vu}
Given a proper, lsc function \(f\) and a point \(\bar{x}\) with \(\partial f(\bar{x})\neq\emptyset\), the \(\mathcal{VU}\) decomposition associated with \(\partial f(\bar{x})\) is defined by
\begin{equation*}
\mathcal{V}f(\bar{x})\mathrel{\mathop:}= \spn(\partial f(\bar{x})-{g} ),\quad \mathcal{U}f(\bar{x})\mathrel{\mathop:}=\mathcal{V}f(\bar{x})^\perp
\end{equation*}
where \({g}\) is an arbitrary subgradient in \(\partial f(\bar{x})\).
\end{definition}
The relative interior of the subdifferential \(\partial f(\bar{x})\) is written
\[
\ri\partial f(\bar{x})
\;:=\;
\bigl\{
g\in\partial f(\bar{x})
:
g+\inte(\eta\mathbb{B})\cap\mathcal{V}f(\bar{x})
\subset\partial f(\bar{x})
\text{ for some }\eta>0
\bigr\}.
\]
\begin{remark}[equivalence with the Rockafellar relative interior]
\label{rem:ri-Rock}
For any \(g\in\partial f(\bar{x})\) one has
\(\mathcal{V}f(\bar{x})=\aff\partial f(\bar{x})-g\). If in addition \(\partial f(\bar{x})\)
is convex, the display above is precisely the Rockafellar relative interior
of the convex set \(\partial f(\bar{x})\) taken in the affine hull
\(\aff\partial f(\bar{x})\):
\[
\ri\partial f(\bar{x})
=\bigl\{g\in\partial f(\bar{x}):
g+\inte\bigl(\eta\mathbb{B}\bigr)\cap\bigl(\aff\partial f(\bar{x})-g\bigr)
\subset\partial f(\bar{x})
\text{ for some }\eta>0\bigr\}.
\]
The set \(\partial f(\bar{x})\) is convex at every point of subdifferential
regularity, and therefore at every point of partial smoothness
(Definition~\ref{def:PS}(ii) below). In that setting the working definition
and the Rockafellar \(\ri\) coincide, so the hypothesis
\(0\in\ri\partial f(\bar{x})\) used after Lewis \cite{Lewis2002active},
Poliquin--Rockafellar \cite{Poliquin1998tilt} and Lewis--Zhang
\cite{lewis2013partial} is the standard one.
\end{remark}
If \(f\) is subdifferentially regular at \(\bar{x}\), then according to \cite[Proposition 5]{Liu2020} (see also \cite[Lemma 12]{Hare2001quadratic}) the subspaces in question can also be characterized as
\begin{gather*}
\mathcal{U}f(\bar{x})=N_{\partial f(\bar{x})}(g^\circ)=\bigl\{w\in\mathbb{R}^n:f'(\bar{x};-w)=-f'(\bar{x};w)\bigr\},\\
\mathcal{V}f(\bar{x})=T_{\partial f(\bar{x})}(g^\circ).
\end{gather*}
\begin{definition}[orthonormal frame of the $\mathcal{U}$-space]\label{def:frame}
Let \( f \) admit a \(\mathcal{VU}\)-decomposition at \( \bar{x} \).
A matrix \( U_f(\bar{x})\in\mathbb{R}^{n\times\dim\mathcal{U}f(\bar{x})} \) is called an
\emph{orthonormal frame} of \( \mathcal{U}f(\bar{x}) \) if its columns form an orthonormal basis of \( \mathcal{U}f(\bar{x}) \).
We write \( U_f \) when the point is clear.
The associated orthogonal projector is
\[
P_{\mathcal{U}f(\bar{x})}=U_fU_f^\top.
\]
Frames of \( \mathcal{U}h(F(\bar{x})) \), \( \mathcal{U}\theta(F(\bar{x})) \), \( \mathcal{U}f_i(\bar{x}) \)
are denoted \( U_h \), \( U_\theta \), \( U_{f_i} \) respectively.
\end{definition}
Any two frames differ by a right orthogonal factor \( Q \); the projector \( P_{\mathcal{U}f(\bar{x})}=U_fU_f^\top \) is independent of the choice.

Let \(m=\dim\mathcal{U}f(\bar{x})\) and \(n-m=\dim\mathcal{V}f(\bar{x})\). Given an orthonormal frame \(U_f\) of \(\mathcal{U}f(\bar{x})\) and a basis matrix \(V_f\in\mathbb{R}^{n\times(n-m)}\) of \(\mathcal{V}f(\bar{x})\), every \(z\in\mathbb{R}^n\) decomposes uniquely as
\[
z=U_fz_u+V_fz_v,
\qquad
z_u=U_f^\top z\in\mathbb{R}^m,
\qquad
z_v=V_f^\dagger z\in\mathbb{R}^{n-m},
\]
where \(V_f^\dagger:=(V_f^\top V_f)^{-1}V_f^\top\) is the Moore--Penrose inverse of \(V_f\).

\begin{definition}[$\mathcal{U}$-gradient]
\label{def:U-grad}
The set \(U_f^\top\partial f(\bar{x})\) is a singleton. Following
\cite{Liu2025}, we write
\begin{equation}
\label{eq:U-grad}
\bar g_u
\;:=\;
U_f^\top g
\in\mathbb{R}^{\dim\mathcal{U}f(\bar{x})}
\qquad\text{for every }g\in\partial f(\bar{x})
\end{equation}
for its unique element, and call \(\bar g_u\) the \(\mathcal{U}\)-gradient of
\(f\) at \(\bar{x}\) in the frame \(U_f\). The
\emph{embedded \(U\)-gradient} is the vector in \(\mathbb{R}^n\)
\[
g_U(\bar{x})
\;:=\;
U_f\bar g_u
=
P_{\mathcal{U}f(\bar{x})}g
\in
\mathcal{U}f(\bar{x}).
\]
It does not depend on the choice of \(g\in\partial f(\bar{x})\), nor on
the frame beyond the product \(U_f\bar g_u\). This is the vector
written \(g_U\) in \cite{Miller2005}. When the point varies we write
\(\bar g_u(x)\) and \(g_U(x)\). When several functions appear we write
\(\bar g_u(f)\), \(\bar g_u(h)\), \(\bar g_u(\theta)\), and so on.
\end{definition}

\begin{definition}[$\mathcal{U}$-Lagrangian and $\mathcal{U}$-Hessian]
\label{def:U-Lag}
Let \(f\in\Gamma_0(\mathbb{R}^n)\) and let \(\bar{x}\) satisfy
\(\partial f(\bar{x})\neq\emptyset\).
Following \cite{Lemarechal2000}, fix \(\bar{g}\in\partial f(\bar{x})\).
The \emph{\(U\)-Lagrangian} of \(f\) at \(\bar{x}\) associated with \(\bar{g}\) is
\begin{equation*}
\mathcal{L}_U^{\bar{g}}(u)
\;:=\;
\inf_{w\in\mathbb{R}^{n-m}}
\Bigl(
f(\bar{x}+U_fu+V_fw)-\bigl\langle \bar{g}_v,\,V_f^\top V_fw\bigr\rangle
\Bigr),
\qquad
u\in\mathbb{R}^m.
\end{equation*}
The associated set of global \(\mathcal{V}\)-space minimizers is
\[
W(u;\bar{g}_v)
\;:=\;
\argmin_{w\in\mathbb{R}^{n-m}}
\Bigl(
f(\bar{x}+U_fu+V_fw)-\bigl\langle \bar{g}_v,\,V_f^\top V_fw\bigr\rangle
\Bigr).
\]
When \(0\in\partial f(\bar{x})\) we take \(\bar{g}=0\) once and for all, and write
\begin{equation}
\label{eq:U-Lag-0}
\mathcal{L}_U(u)
\;:=\;
\inf_{w\in\mathbb{R}^{n-m}}
f(\bar{x}+U_fu+V_fw),
\qquad
u\in\mathbb{R}^m.
\end{equation}
This is the convention used throughout the paper whenever
\(f\) is convex and \(0\in\partial f(\bar{x})\).
Whenever the second derivative exists, the \emph{\(\mathcal{U}\)-Hessian} of \(f\) at \(\bar{x}\) is
\begin{equation}
\label{eq:U-Hess}
H_U
\;:=\;
\nabla^2\mathcal{L}_U(0)\in\mathbb{R}^{m\times m}.
\end{equation}
Existence of \(H_U\) is not assumed here.
\end{definition}

\begin{definition}[local \(\mathcal{U}\)-Lagrangian]
\label{def:U-Lag-loc}
Let \(f:\mathbb{R}^n\to\overline{\mathbb{R}}\) be proper and lsc, and let
\(\bar{x}\) satisfy \(\partial f(\bar{x})\neq\emptyset\).
Following \cite[Definition~6]{Liu2020}, fix \(\varepsilon>0\) and an
arbitrary \(\bar{g}\in\ri\partial f(\bar{x})\). The
\emph{local \(U\)-Lagrangian} of \(f\) at \(\bar{x}\) associated with
\(\bar{g}\) is
\begin{equation}
\label{eq:L-eps}
\mathcal{L}_\varepsilon^{\bar{g}}(u)
\;:=\;
\inf_{\|w\|\le\varepsilon}
\Bigl(
f(\bar{x}+U_fu+V_fw)
-
\bigl\langle\bar{g}_v,\,V_f^\top V_fw\bigr\rangle
\Bigr),
\qquad
u\in\mathbb{R}^m.
\end{equation}
The associated set of \(\mathcal{V}\)-space minimizers is
\begin{equation}
\label{eq:W-eps}
W_\varepsilon(u;\bar{g}_v)
\;:=\;
\argmin_{\|w\|\le\varepsilon}
\Bigl(
f(\bar{x}+U_fu+V_fw)
-
\bigl\langle\bar{g}_v,\,V_f^\top V_fw\bigr\rangle
\Bigr).
\end{equation}
When \(0\in\ri\partial f(\bar{x})\) we take \(\bar{g}=0\) and write
\(\mathcal{L}_\varepsilon\) and \(W_\varepsilon(u)\) for
\(\mathcal{L}_\varepsilon^{0}\) and \(W_\varepsilon(u;0)\).
Whenever the second derivative exists, the
\emph{local \(\mathcal{U}\)-Hessian} is
\begin{equation}
\label{eq:U-Hess-eps}
H_\varepsilon
\;:=\;
\nabla^2\mathcal{L}_\varepsilon(0)\in\mathbb{R}^{m\times m}.
\end{equation}
\end{definition}
\begin{remark}[when \(\mathcal{L}_U=\mathcal{L}_\varepsilon\)]
\label{rem:LU-Leps}
Definition~\ref{def:U-Lag} applies only to convex functions.
For a general proper lsc function the working objects are
\(\mathcal{L}_\varepsilon\) and \(H_\varepsilon\) from
Definition~\ref{def:U-Lag-loc}.
If \(f\in\Gamma_0(\mathbb{R}^n)\), then
\(\mathcal{L}_U\le\mathcal{L}_\varepsilon\) and the two coincide near
the origin: the set of global \(\mathcal{V}\)-minimizers satisfies
\(W(u;\bar{g}_v)=o(\|u\|)\), hence lies in the ball
\(\|w\|\le\varepsilon\) for small \(u\)
\cite{Lemarechal2000,Liu2020}. In that case
\(H_U=H_\varepsilon\) whenever either second derivative exists.
Without a quadratic minorant the value of \(\mathcal{L}_\varepsilon^{\bar{g}}\)
may still depend on \(\varepsilon\)
\cite[Remark~3]{Liu2020}.
\end{remark}

\section{A \texorpdfstring{\(\mathcal{VU}\)}{VU}-calculus}\label{sec:calculus}
This section records the \(\mathcal{VU}\)-spaces of standard composite models.
Throughout, the outer function in a composition is convex and the inner mapping
is of class \(C^1\). Nonconvex outer functions and merely Lipschitz inner
mappings are left aside (see Section~\ref{sec:conclude}).
The identities themselves are those of the subspaces \(\mathcal{V}f\) and
\(\mathcal{U}f\). An orthonormal frame and the \(\mathcal{U}\)-gradient
are recorded immediately after each rule.
Subsection~\ref{ss:chain} treats composition;
Subsection~\ref{ss:sums} treats sums and the structured model;
Subsection~\ref{ss:frames} computes a hinge example;
Subsection~\ref{ss:eps-VU} repeats the same algebra for an outer
approximation of the subdifferential.

The \(\mathcal{VU}\)-chain rule uses the subdifferential chain rule of
Rockafellar--Wets \cite[Theorem~10.6]{Rockafellar1998}: if
\(F:\mathbb{R}^n\to\mathbb{R}^m\) is of class \(C^1\) and
\(h:\mathbb{R}^m\to\overline{\mathbb{R}}\) is proper and lsc, then
\begin{equation*}
\label{eq:CR}
\partial(h\circ F)(x)
\;=\;
F'(x)^\top\partial h\bigl(F(x)\bigr)
\tag{CR}
\end{equation*}
at every \(x\in\dom(h\circ F)\) at which \(h\) is
subdifferentially regular at \(F(x)\) and
\begin{equation}
\label{eq:RW-CQ}
\ker F'(x)^\top \cap \partial^\infty h\bigl(F(x)\bigr)
\;=\;
\{0\}.
\end{equation}

\begin{remark}[orthonormal frames of kernels]
\label{lem:frame-ker}
Let \(A\in\mathbb{R}^{p\times n}\) and set \(\mathcal{L}:=\ker A\). If
\(A=P\Sigma Q^\top\) is a singular-value decomposition of \(A\), the columns of
\(Q\) associated with vanishing singular values form an orthonormal frame of
\(\mathcal{L}\). Any two frames of \(\mathcal{L}\) differ by a right orthogonal
factor, and the orthogonal projector onto \(\mathcal{L}\) is independent of the
choice.
\end{remark}

\begin{definition}[admissible points]
\label{def:R-hF}
Let \(F:\mathbb{R}^n\to\mathbb{R}^m\) be of class \(C^1\) and let
\(h\in\Gamma_0(\mathbb{R}^m)\). The set \(\mathcal{R}_{h,F}\) is contained in \(\dom(h\circ F)\) by construction: for \(\bar{x}\in\dom(h\circ F)\) we write
\(\bar{x}\in\mathcal{R}_{h,F}\) if either
\begin{itemize}
\item[(R1)]
\(F(\bar{x})\in\ri(\dom h)\) and \(F\) maps a neighbourhood of \(\bar{x}\) into
\(\aff(\dom h)\), or
\item[(R2)]
\(F(\bar{x})\in\dom h\setminus\ri(\dom h)\)
and \eqref{eq:RW-CQ} holds.
\end{itemize}
\end{definition}
When \(h\) is convex it is subdifferentially regular on its domain, so
equality in \eqref{eq:CR} follows from \eqref{eq:RW-CQ} by
\cite[Theorem~10.6]{Rockafellar1998}, and from (R1) by
Lemma~\ref{lem:R1-CR} below.

\begin{lemma}[relative-interior chain rule]
\label{lem:R1-CR}
Let \(h\in\Gamma_0(\mathbb{R}^m)\) and let \(F:\mathbb{R}^n\to\mathbb{R}^m\) be
of class \(C^1\). If \(\bar{x}\in\dom(h\circ F)\),
\(F(\bar{x})\in\ri(\dom h)\), and \(F\) maps a
neighbourhood of \(\bar{x}\) into \(\aff(\dom h)\),
then \(\partial(h\circ F)(\bar{x})=F'(\bar{x})^\top\partial h(F(\bar{x}))\).
\end{lemma}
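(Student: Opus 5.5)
The plan is to reduce the claim to a chain rule in which the outer function is finite near the base point, by working inside the affine hull of \(\dom h\). Write \(A:=\aff(\dom h)=\bar y+L\), where \(\bar y:=F(\bar{x})\) and \(L\) is the parallel subspace. Fix an orthonormal basis matrix \(B\in\mathbb{R}^{m\times k}\) of \(L\), and define
\[
\tilde h(z):=h(\bar y+Bz),\qquad z\in\mathbb{R}^k .
\]
Then \(\tilde h\in\Gamma_0(\mathbb{R}^k)\). Since \(\bar y\in\ri(\dom h)\), the origin lies in the interior of \(\dom\tilde h\), so \(\tilde h\) is finite and locally Lipschitz near \(0\). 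On a neighbourhood \(N\) of \(\bar{x}\) the hypothesis gives \(F(x)\in A\), so we may write \(F(x)=\bar y+BG(x)\) with \(G(x):=B^\top(F(x)-\bar y)\). This \(G\) is \(C^1\), and \(F'(x)=BG'(x)\) on \(N\), because \(F(x)-\bar y\in L\) forces the range of \(F'(x)\) into \(L\). Consequently \(h\circ F=\tilde h\circ G\) on \(N\).

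Next I will apply the chain rule (CR) to \(\tilde h\circ G\) at \(\bar{x}\). The function \(\tilde h\) is convex, hence subdifferentially regular. Since \(0\in\inte\dom\tilde h\), we have \(\partial^\infty\tilde h(0)=N_{\dom\tilde h}(0)=\{0\}\), so \eqref{eq:RW-CQ} holds trivially. Rockafellar--Wets \cite[Theorem~10.6]{Rockafellar1998} then gives
\[
\partial(h\circ F)(\bar{x})=\partial(\tilde h\circ G)(\bar{x})=G'(\bar{x})^\top\partial\tilde h(0).
\]
The first equality uses only that the limiting subdifferential is local.

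It remains to identify \(G'(\bar{x})^\top\partial\tilde h(0)\) with \(F'(\bar{x})^\top\partial h(\bar y)\). I will invoke the convex-analysis affine-composition rule \(\partial\tilde h(0)=B^\top\partial h(\bar y)\). This rule holds because \(\bar y+B\cdot\) has range \(A\), which meets \(\ri(\dom h)\). It can also be checked by hand: \(\partial h(\bar y)=\partial h(\bar y)+L^\perp\), since \(\dom h\subset A\), and subgradient inequalities tested only on \(A\) are exactly the inequalities for \(\tilde h\). Given this, since \(F'(\bar{x})=BG'(\bar{x})\),
\[
G'(\bar{x})^\top B^\top\partial h(\bar y)=F'(\bar{x})^\top\partial h(\bar y),
\]
which is the claim.

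The main obstacle is the step \(\partial\tilde h(0)=B^\top\partial h(\bar y)\), and specifically the inclusion \(\subset\). Given \(v\in\partial\tilde h(0)\), take \(g:=Bv\). For \(y\in\dom h\) we have \(y-\bar y=Bz\) with \(z=B^\top(y-\bar y)\), so \(h(y)-h(\bar y)=\tilde h(z)-\tilde h(0)\ge\langle v,z\rangle=\langle g,y-\bar y\rangle\). For \(y\notin\dom h\) the inequality is trivial. Hence \(g\in\partial h(\bar y)\) and \(B^\top g=v\). The reverse inclusion is immediate from the subgradient inequality restricted to \(A\).
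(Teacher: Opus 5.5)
Your proof is correct and follows essentially the same route as the paper: restrict \(h\) to \(\aff(\dom h)\), where it is finite and locally Lipschitz near \(F(\bar{x})\), apply \cite[Theorem~10.6]{Rockafellar1998} to the restricted composite, and note that the range of \(F'(\bar{x})\) lies in \(L\), so \(F'(\bar{x})^\top\) annihilates the extra \(L^\perp\) part of \(\partial h(F(\bar{x}))\). The only difference is cosmetic: you use explicit coordinates \(z\mapsto\bar y+Bz\) and check \(\partial\tilde h(0)=B^\top\partial h(\bar y)\) by hand, whereas the paper treats \(\hat h^L\) as a function on \(L\) and invokes the sum rule \(\partial\hat h(0)=\partial\hat h^L(0)+L^\perp\).
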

\begin{proof}
Set \(\hat h(z):=h\bigl(F(\bar{x})+z\bigr)\) and
\(\hat F(x):=F(x)-F(\bar{x})\), and write
\(L:=\aff(\dom\hat h)=\aff(\dom h)-F(\bar{x})\).
Then \(L\) is a linear subspace, \(0\in\ri(\dom\hat h)\), and
\(\hat F\) maps a neighbourhood of \(\bar{x}\) into \(L\).
Let \(\hat h^L\) be the restriction of \(\hat h\) to \(L\), viewed as a
function on \(L\). As functions on \(\mathbb{R}^m\) one has
\(\hat h=\hat h^L+\delta_L\).
Since \(0\in\operatorname{int}_L(\dom\hat h^L)\), the function
\(\hat h^L\) is locally Lipschitz on \(L\) near the origin, and
\(\partial^\infty\hat h^L(0)=\{0\}\).
As \(\hat F\) takes values in \(L\) near \(\bar{x}\),
\cite[Theorem~10.6]{Rockafellar1998} yields
\(\partial(h\circ F)(\bar{x})=\partial(\hat h^L\circ\hat F)(\bar{x})
=\hat F'(\bar{x})^\top\partial\hat h^L(0)\).
The sum rule for \(\hat h=\hat h^L+\delta_L\) gives
\(\partial\hat h(0)=\partial\hat h^L(0)+L^\perp\).
Translation of the subdifferential and
\(\hat F'(\bar{x})=F'(\bar{x})\) then give
\(\partial h(F(\bar{x}))=\partial\hat h(0)=\partial\hat h^L(0)+L^\perp\).
Finally \(\operatorname{range}\hat F'(\bar{x})\subset L\), so
\(\hat F'(\bar{x})^\top L^\perp=\{0\}\) and
\(\partial(h\circ F)(\bar{x})=F'(\bar{x})^\top\partial h(F(\bar{x}))\).
\end{proof}

\begin{remark}[the set \(\mathcal{R}_{h,F}\) is not a new qualification]
\label{rem:R-hF}
Condition (R2) is the qualification of
\cite[Theorem~10.6]{Rockafellar1998}. Condition (R1) is the
relative-interior requirement together with the neighbourhood
condition of Lemma~\ref{lem:R1-CR}. If \(\aff(\dom h)\neq\mathbb{R}^m\),
then \(\partial^\infty h=(\aff(\dom h))^\perp\) throughout
the relative interior, so (R1) need not imply (R2).
\end{remark}

\begin{example}[(R1) need not imply (R2)]
\label{ex:R1-not-R2-CR}
Let
\[
h(y_1,y_2)
=
\begin{cases}
|y_1|&\text{if }y_2=0,\\
+\infty&\text{if }y_2\neq 0,
\end{cases}
\qquad
F(x)=(x,0).
\]
Then \(h\in\Gamma_0(\mathbb{R}^2)\), \(F\) is of class \(C^\infty\),
\(F(x)\in\ri(\dom h)\) for every \(x\), and the image of \(F\) lies
in \(\aff(\dom h)\), so (R1) holds.
Here \(\partial^\infty h(F(x))=\{0\}\times\mathbb{R}\) and
\(\ker F'(x)^\top=\spn\{(0,1)\}\), so \eqref{eq:RW-CQ}
fails and (R2) fails. Nevertheless \(h\circ F=|\cdot|\) and
\eqref{eq:CR} holds by Lemma~\ref{lem:R1-CR}.
\end{example}

\subsection{Chain rule}
\label{ss:chain}
The set \(\mathcal{R}_{h,F}\) is now in hand. The exact
\(\mathcal{VU}\)-chain rule under that qualification is as follows.
\begin{theorem}[Chain rule for convex outer function]\label{thm:chain}
Let \(f:\mathbb{R}^n\to\overline{\mathbb{R}}\), \(F:\mathbb{R}^n\to\mathbb{R}^m\) and \(h:\mathbb{R}^m\to\overline{\mathbb{R}}\) satisfy \(f=h\circ F\), with \(h\in\Gamma_0(\mathbb{R}^m)\) and \(F\) of class \(C^1\).
If \(\bar{x}\in \mathcal{R}_{h,F}\) and \(\partial h\bigl(F(\bar{x})\bigr)\neq\emptyset\)
then 
\begin{enumerate}[label=\textnormal{(\roman*)}]
\item \(\mathcal{V}f(\bar{x})=F'(\bar{x})^\top\mathcal{V}h\bigl(F(\bar{x})\bigr)\) and
\(\mathcal{U}f(\bar{x})=F'(\bar{x})^{-1}\bigl(\mathcal{U}h(F(\bar{x}))\bigr)\).
\label{it0:1}
\item If \(F'(\bar{x})\) has full row rank \(m\), then
\(\mathcal{U}h\bigl(F(\bar{x})\bigr)=F'(\bar{x})\,\mathcal{U}f(\bar{x})\). 
\end{enumerate}
\end{theorem}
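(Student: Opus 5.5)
The plan is to reduce everything to linear algebra on the subdifferential identity \eqref{eq:CR}. Write \(A:=F'(\bar{x})\) and \(\bar y:=F(\bar{x})\). Since \(h\in\Gamma_0(\mathbb{R}^m)\) is subdifferentially regular on its domain, \(\bar{x}\in\mathcal{R}_{h,F}\) gives \(\partial f(\bar{x})=A^\top\partial h(\bar y)\). Under (R2) this comes from \cite[Theorem~10.6]{Rockafellar1998}; under (R1) it is Lemma~\ref{lem:R1-CR}. Because \(\partial h(\bar y)\neq\emptyset\), the set \(\partial f(\bar{x})\) is nonempty and Definition~\ref{def:vu} applies to \(f\) at \(\bar{x}\).

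For (i), I will fix \(g_h\in\partial h(\bar y)\) and set \(g:=A^\top g_h\in\partial f(\bar{x})\). Then
\[
\partial f(\bar{x})-g=A^\top\bigl(\partial h(\bar y)-g_h\bigr).
\]
A linear map commutes with taking spans, since \(\spn(A^\top S)=A^\top\spn S\) for any set \(S\). This yields \(\mathcal{V}f(\bar{x})=A^\top\mathcal{V}h(\bar y)\). For the \(\mathcal{U}\)-space I use the standard orthogonality identity \((A^\top W)^\perp=A^{-1}(W^\perp)\) for a subspace \(W\subset\mathbb{R}^m\). Indeed, \(x\perp A^\top w\) for all \(w\in W\) iff \(\langle Ax,w\rangle=0\) for all \(w\in W\), iff \(Ax\in W^\perp\). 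Taking \(W=\mathcal{V}h(\bar y)\) gives \(\mathcal{U}f(\bar{x})=A^{-1}(\mathcal{U}h(\bar y))\).

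For (ii), assume \(A\) has full row rank, so \(A\) is surjective. From (i), \(A\,\mathcal{U}f(\bar{x})=A\,A^{-1}(\mathcal{U}h(\bar y))\). For a surjective linear map and any subset \(S\), one has \(A(A^{-1}S)=S\): each \(y\in S\) has a preimage \(x\), and that \(x\) lies in \(A^{-1}S\). Hence \(A\,\mathcal{U}f(\bar{x})=\mathcal{U}h(\bar y)\).

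No step is a real obstacle, since the whole weight rests on obtaining equality in \eqref{eq:CR}, and that is already supplied by Lemma~\ref{lem:R1-CR} and \cite[Theorem~10.6]{Rockafellar1998}. The one point to check carefully is that Definition~\ref{def:vu} is independent of the chosen subgradient. I therefore use a subgradient \(g\) of the particular form \(A^\top g_h\), which is legitimate because every element of \(\partial f(\bar{x})\) has this form.
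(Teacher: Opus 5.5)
Your proposal is correct and follows the paper's proof essentially step for step. It obtains equality in \eqref{eq:CR} from Lemma~\ref{lem:R1-CR} or \cite[Theorem~10.6]{Rockafellar1998}, pushes the span through $A^\top$ using a subgradient of the form $A^\top g_h$, takes orthogonal complements via $(A^\top W)^\perp=A^{-1}(W^\perp)$, and uses $A(A^{-1}S)=S$ for surjective $A$ in part (ii). The paper phrases (ii) as $A(A^{-1}(U))=U\cap\Ima A$, which reduces to your identity when $A$ is surjective.
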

\begin{proof}
By \cite[Theorem~10.6]{Rockafellar1998} in case~(R2), or by
Lemma~\ref{lem:R1-CR} in case~(R1),
the chain rule \eqref{eq:CR} is available at \(\bar{x}\in\mathcal{R}_{h,F}\).
Together with \(\partial h(F(\bar{x}))\neq\emptyset\) this yields
\(\partial f(\bar{x})=F'(\bar{x})^\top\partial h(F(\bar{x}))\neq\emptyset\), so
the spaces \(\mathcal{V}f(\bar{x})\) and \(\mathcal{U}f(\bar{x})\) are defined.
Let \(s^h\in\partial h(F(\bar{x}))\) be arbitrary and set \(s^f:=F'(\bar{x})^\top s^h\). Then
\begin{align*}
\mathcal{V}f(\bar{x})
&=\spn\bigl(\partial f(\bar{x})-s^f\bigr)
=\spn\bigl(F'(\bar{x})^\top\bigl(\partial h(F(\bar{x}))-s^h\bigr)\bigr)\\
&=F'(\bar{x})^\top\spn\bigl(\partial h(F(\bar{x}))-s^h\bigr)
=F'(\bar{x})^\top\mathcal{V}h\bigl(F(\bar{x})\bigr),
\end{align*}
where the third equality follows from the elementary identity \(\spn(AC)=A(\spn C)\).  
Taking orthogonal complements and using the standard identity \((MV)^\perp=\{y:M^\top y\in V^\perp\}\) for a linear map \(M\) and a subspace \(V\) gives
\[
\mathcal{U}f(\bar{x})=\bigl\{y:F'(\bar{x})y\in\mathcal{U}h\bigl(F(\bar{x})\bigr)\bigr\}.
\]

For (ii), write \( A:=F'(\bar{x}) \) and \( U:=\mathcal{U}h(F(\bar{x})) \). Part (i) says \( \mathcal{U}f(\bar{x})=A^{-1}(U) \). For any linear map \( A \) and any subspace \( U \) of the codomain, one has \(
A\bigl(A^{-1}(U)\bigr)=U\cap\Ima A
\). 
Full row rank means that \( A \) is surjective, hence \( \Ima A=\mathbb{R}^m \) and
\(
A\,\mathcal{U}f(\bar{x})=U=\mathcal{U}h\bigl(F(\bar{x})\bigr),
\) 
which is the claim.
\end{proof}
Let the columns of \(V_h\) span \(\mathcal{V}h\bigl(F(\bar{x})\bigr)\). Then
\(\mathcal{U}f(\bar{x})=\ker A\) for
\(A=V_h^\top F'(\bar{x})\)
(equivalently \(A=(I-U_hU_h^\top)F'(\bar{x})\)), and an orthonormal frame
\(U_f\) of \(\mathcal{U}f(\bar{x})\) is furnished by Remark~\ref{lem:frame-ker}.
For every \(s\in\partial h\bigl(F(\bar{x})\bigr)\),
\(\bar g_u(f)=U_f^\top F'(\bar{x})^\top s\).

The hypothesis \(\partial h(F(\bar{x}))\neq\emptyset\) in Theorem~\ref{thm:chain}
is used only so that the spaces \(\mathcal{V}f(\bar{x})\) and
\(\mathcal{U}f(\bar{x})\) are defined; it is not required for \eqref{eq:CR}.
Without a point of \(\mathcal{R}_{h,F}\) the space identity can fail.

\begin{example}[failure of the chain rule without a qualification]
\label{ex:chain-CQ}
Let \(C\subset\mathbb{R}^2\) be the closed unit disk centred at \((0,1)\), so
that \(C\) meets the horizontal axis only at the origin. Set
\(F:\mathbb{R}\to\mathbb{R}^2\), \(F(t)=(t,0)\), and \(h=\delta_C\). Then
\(h\circ F=\delta_{\{0\}}\), \(F'(0)=(1,0)^\top\), and
\[
\ker F'(0)^\top=\spn\{(0,1)\},\qquad
\partial^\infty h(0,0)=N_C(0,0)=\cone\{(0,-1)\}.
\]
The intersection is nontrivial, so \(0\notin\mathcal{R}_{h,F}\). Here
\(\mathcal{V}(h\circ F)(0)=\mathbb{R}\), whereas
\(F'(0)^\top\mathcal{V}h(0,0)=\{0\}\).
\end{example}

\begin{remark}[space identities versus the subdifferential chain rule]
\label{rem:V-vs-CR}
Once \eqref{eq:CR} holds, the identity
\(\spn(AC)=A(\spn C)\) gives the
\(\mathcal{VU}\)-chain rule of Theorem~\ref{thm:chain}(i). The converse
need not hold: the space identities
\(\mathcal{V}(h\circ F)=F'^\top\mathcal{V}h\) and
\(\mathcal{U}(h\circ F)=F'^{-1}(\mathcal{U}h)\) may be valid while
\eqref{eq:CR} fails. A mere inclusion
\(F'^\top\partial h\subset\partial(h\circ F)\) is not enough to guarantee
either conclusion. Theorem~\ref{thm:chain} therefore uses \eqref{eq:CR},
not the equality of spans. The next example records a case in which the
\(\mathcal{VU}\)-formulae hold and \eqref{eq:CR} does not;
Example~\ref{ex:chain-CQ} is the complementary case in which both fail.
\end{remark}

\begin{example}[relative interior without the neighbourhood condition]
\label{ex:R1-not-R2}
Let \(h\) be as in Example~\ref{ex:R1-not-R2-CR} and set \(F(x)=(x,x^2)\).
Then \(F(0)\in\ri(\dom h)\), but no neighbourhood of the origin is
mapped into the affine hull, so \(0\notin\mathcal{R}_{h,F}\).
Here \(h\circ F=\delta_{\{0\}}\), hence
\(\partial(h\circ F)(0)=\mathbb{R}\) and
\(\mathcal{V}(h\circ F)(0)=\mathbb{R}\),
\(\mathcal{U}(h\circ F)(0)=\{0\}\).
On the other hand
\(F'(0)^\top=(1,0)\) and \(\partial h(0,0)=[-1,1]\times\mathbb{R}\), so
\(F'(0)^\top\partial h(0,0)=[-1,1]\neq\mathbb{R}\) and \eqref{eq:CR} fails,
while
\[
F'(0)^\top\mathcal{V}h(0,0)=\mathbb{R}
=\mathcal{V}(h\circ F)(0),\qquad
F'(0)^{-1}\bigl(\mathcal{U}h(0,0)\bigr)=\{0\}
=\mathcal{U}(h\circ F)(0).
\]
The \(\mathcal{VU}\)-chain rule holds; the subdifferential chain rule does
not.
\end{example}

A finite maximum of \(C^1\) functions has the following
\(\mathcal{VU}\)-spaces. The same two formulae appear as the
\(\varepsilon=0\) case of
\cite[§5.4]{Liu2019Subdifferential}, and as the finitely determined
max-function in Mifflin--Sagastiz\'abal \cite[§4.1]{Mifflin2000pdg}
(there written
\(\mathcal{V}f=\spn\{\nabla f_i-\nabla f_{i_0}:i\in I\setminus\{i_0\}\}\)).
Neither source uses affine independence of the active gradients for the
spaces themselves; that extra assumption is only for a full-rank PDG
representation. The statement is recorded only as a corollary of
Theorem~\ref{thm:chain}.
\begin{proposition}[finite max of smooth functions, classical]\label{prop:max}
Let \( f=\max\{f_1,\dots,f_m\} \) with each \( f_i:\mathbb{R}^n\to\mathbb{R} \) of class \( C^1 \), and let
\[
I(\bar{x}):=\{i:f_i(\bar{x})=f(\bar{x})\}.
\]
Then \( f \) admits a \(\mathcal{VU}\)-decomposition at \( \bar{x} \) and
\begin{align*}
\mathcal{V}f(\bar{x})
&=\spn\bigl\{\nabla f_i(\bar{x})-\nabla f_j(\bar{x}):i,j\in I(\bar{x})\bigr\},\\
\mathcal{U}f(\bar{x})
&=\bigl\{y\in\mathbb{R}^n:\langle\nabla f_i(\bar{x}),y\rangle=\langle\nabla f_j(\bar{x}),y\rangle\text{ for all }i,j\in I(\bar{x})\bigr\}.
\end{align*}
\end{proposition}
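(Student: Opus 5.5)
Write \(f=h\circ F\) with \(F:=(f_1,\dots,f_m):\mathbb{R}^n\to\mathbb{R}^m\), which is of class \(C^1\), and \(h(y):=\max_i y_i\). Then \(h\in\Gamma_0(\mathbb{R}^m)\) is finite-valued, so \(\dom h=\mathbb{R}^m\), \(F(\bar{x})\in\ri(\dom h)\), and \(\aff(\dom h)=\mathbb{R}^m\) trivially contains the image of a neighbourhood of \(\bar{x}\). Hence \(\bar{x}\in\mathcal{R}_{h,F}\) through (R1). Alternatively, (R2) is unnecessary since \(\partial^\infty h=\{0\}\) everywhere. The classical formula for the subdifferential of the max of coordinates gives
\[
\partial h(F(\bar{x}))=\conv\{e_i:i\in I(\bar{x})\}\neq\emptyset,
\]
because \(I(\bar{x})\) is exactly the set of indices \(i\) with \(F_i(\bar{x})=h(F(\bar{x}))\). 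Theorem~\ref{thm:chain}(i) therefore applies. It shows that \(\partial f(\bar{x})=F'(\bar{x})^\top\partial h(F(\bar{x}))\) is nonempty, so the decomposition exists, and that \(\mathcal{V}f(\bar{x})=F'(\bar{x})^\top\mathcal{V}h(F(\bar{x}))\) and \(\mathcal{U}f(\bar{x})=F'(\bar{x})^{-1}(\mathcal{U}h(F(\bar{x})))\).

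The remaining task is to compute \(\mathcal{V}h\). Fix \(j_0\in I(\bar{x})\) and take \(g=e_{j_0}\). Then
\[
\mathcal{V}h(F(\bar{x}))=\spn\bigl(\conv\{e_i\}_{i\in I}-e_{j_0}\bigr)=\spn\{e_i-e_j:i,j\in I(\bar{x})\}.
\]
The inclusion \(\subset\) holds because each convex combination minus \(e_{j_0}\) equals \(\sum_i\lambda_i(e_i-e_{j_0})\). The inclusion \(\supset\) holds because \(e_i-e_j=(e_i-e_{j_0})-(e_j-e_{j_0})\), and each \(e_i-e_{j_0}\) lies in the set being spanned.

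Applying \(F'(\bar{x})^\top\), whose \(i\)th column is \(\nabla f_i(\bar{x})\), and using linearity together with \(\spn(AC)=A\spn C\), gives the stated formula for \(\mathcal{V}f(\bar{x})\).

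For \(\mathcal{U}f\) there are two routes. The first takes the orthogonal complement directly: \(y\perp\mathcal{V}f(\bar{x})\) if and only if \(\langle\nabla f_i(\bar{x})-\nabla f_j(\bar{x}),y\rangle=0\) for all \(i,j\in I(\bar{x})\). The second uses the preimage formula: \(\mathcal{U}h=\{z:z_i=z_j\ \forall i,j\in I\}\), and \((F'(\bar{x})y)_i=\langle\nabla f_i(\bar{x}),y\rangle\). The two routes agree.

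The only real obstacle is justifying \(\partial h(F(\bar{x}))=\conv\{e_i:i\in I(\bar{x})\}\). I would cite it as a standard convex-analysis fact. If a derivation is wanted, compute \(h'(y;d)=\max_{i\in I}d_i\), which is the support function of that simplex, and recall that the subdifferential of a finite convex function is the set whose support function is \(h'(y;\cdot)\). Everything else is linear algebra.
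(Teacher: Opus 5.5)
Your proposal is correct and follows the paper's proof step for step. Both write \(f=h\circ F\) with \(h(y)=\max_i y_i\), note that finiteness of \(h\) puts \(\bar{x}\) in \(\mathcal{R}_{h,F}\) via (R1), use \(\partial h(F(\bar{x}))=\conv\{e_i:i\in I(\bar{x})\}\), and apply Theorem~\ref{thm:chain}(i) to the explicit \(\mathcal{V}h\) and \(\mathcal{U}h\) through \(F'(\bar{x})^\top\) and its preimage (cosmetically, since \(F(\bar{x})\in\ri(\dom h)=\mathbb{R}^m\), alternative (R2) of Definition~\ref{def:R-hF} is formally unavailable, so (R1) is the only route, as you state first).
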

\begin{proof}
Write \( f=h\circ F \) with
\begin{gather}
F:\mathbb{R}^n\to\mathbb{R}^m,\qquad F(x)=\bigl(f_1(x),\dots,f_m(x)\bigr), \text{ and }
\\
h:\mathbb{R}^m\to\mathbb{R},\qquad h(y)=\max\{y_1,\dots,y_m\}.
\end{gather}
The outer function \(h\) is finite and convex on \(\mathbb{R}^m\), so
\(\dom h=\mathbb{R}^m\), the neighbourhood condition in (R1) is
automatic, and \(\partial^\infty h\equiv\{0\}\). Every point, including
\(\bar{x}\), belongs to \(\mathcal{R}_{h,F}\). Also
\[
\partial h(F(\bar{x}))=\conv\{e_i:i\in I(\bar{x})\}\neq\emptyset,
\]
so both functions admit a \(\mathcal{VU}\)-decomposition and Theorem~\ref{thm:chain} applies.

Let \( I:=I(\bar{x}) \). The subspace parallel to \( \aff(\partial h(F(\bar{x}))) \) is
\[
\mathcal{V}h\bigl(F(\bar{x})\bigr)
=\spn\{e_i-e_j:i,j\in I\}
=\bigl\{v\in\mathbb{R}^m:v_k=0\text{ for }k\notin I,\ \textstyle\sum_{i\in I}v_i=0\bigr\}.
\]
Its orthogonal complement is \(
\mathcal{U}h\bigl(F(\bar{x})\bigr)
=\bigl\{w\in\mathbb{R}^m:w_i=w_j\text{ for all }i,j\in I\bigr\}.
\)
The Jacobian of \( F \) satisfies \( F'(\bar{x})^\top v=\sum_{i=1}^m v_i\nabla f_i(\bar{x}) \). Theorem \ref{thm:chain}(i) therefore gives
\begin{align*}
\mathcal{V}f(\bar{x})
&=F'(\bar{x})^\top\mathcal{V}h\bigl(F(\bar{x})\bigr)
=\Bigl\{\sum_{i\in I}v_i\nabla f_i(\bar{x}):\sum_{i\in I}v_i=0\Bigr\}\\
&=\spn\bigl\{\nabla f_i(\bar{x})-\nabla f_j(\bar{x}):i,j\in I\bigr\} \text{ and }
\end{align*}
\begin{align*}
\mathcal{U}f(\bar{x})
&=\bigl\{y:F'(\bar{x})y\in\mathcal{U}h(F(\bar{x}))\bigr\}
=\bigl\{y:\langle\nabla f_i(\bar{x}),y\rangle\text{ are equal for all }i\in I\bigr\}.
\end{align*}
\end{proof}
With \(V_h\) the matrix of columns \(e_i-e_{i_*}\) for \(i\in I\setminus\{i_*\}\),
the rows of \(A=V_h^\top F'(\bar{x})\) are
\(\bigl(\nabla f_i(\bar{x})-\nabla f_{i_*}(\bar{x})\bigr)^\top\).
Then \(\mathcal{U}f(\bar{x})=\ker A\), a frame \(U_f\) is given by
Remark~\ref{lem:frame-ker}, and
\(\bar g_u(f)=U_f^\top\nabla f_i(\bar{x})\) for every active index \(i\).

\subsection{Sums and the structured model}
\label{ss:sums}

The chain rule treats a single outer function. The same linear algebra
gives the spaces of a separable sum, a convex sum, a smooth perturbation
and the structured model \(\delta_X+f_0+\theta\circ F\).

\begin{lemma}[separable functions]
\label{lem:separable}
Let \( f(x_1,x_2)=f_1(x_1)+f_2(x_2) \), where \( f_i:\mathbb{R}^{n_i}\to\overline{\mathbb{R}} \) (\( i=1,2 \)) are proper lsc functions.  
Assume that at \( \bar{x}=(\bar{x}_1,\bar{x}_2) \) one has 
each \( f_i \) is subdifferentially regular at \( \bar{x}_i \), and \( \partial f_i(\bar{x}_i)\neq\emptyset \) (\( i=1,2 \)).  
Then 
\[
\mathcal{V}f(\bar{x})=\mathcal{V}f_1(\bar{x}_1)\times\mathcal{V}f_2(\bar{x}_2),\qquad
\mathcal{U}f(\bar{x})=\mathcal{U}f_1(\bar{x}_1)\times\mathcal{U}f_2(\bar{x}_2).
\]
This is the \(\mathcal{VU}\)-form of \cite[Proposition~10.5]{Rockafellar1998}.
\end{lemma}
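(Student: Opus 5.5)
The plan is to reduce everything to the subdifferential product rule and then carry out the same span algebra used in the proof of Theorem~\ref{thm:chain}. First, I would invoke \cite[Proposition~10.5]{Rockafellar1998}. It says that for a separable sum of proper lsc functions one has
\[
\partial f(\bar{x})=\partial f_1(\bar{x}_1)\times\partial f_2(\bar{x}_2),
\]
and that \(f\) is subdifferentially regular at \(\bar{x}\) whenever each \(f_i\) is regular at \(\bar{x}_i\). Only the product formula is needed for the spaces. With \(\partial f_i(\bar{x}_i)\neq\emptyset\), it gives \(\partial f(\bar{x})\neq\emptyset\), so \(\mathcal{V}f(\bar{x})\) and \(\mathcal{U}f(\bar{x})\) are defined by Definition~\ref{def:vu}.

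Second, I would fix \(g^i\in\partial f_i(\bar{x}_i)\) and set \(g:=(g^1,g^2)\in\partial f(\bar{x})\). Then
\[
\partial f(\bar{x})-g=\bigl(\partial f_1(\bar{x}_1)-g^1\bigr)\times\bigl(\partial f_2(\bar{x}_2)-g^2\bigr).
\]
Write \(C_i:=\partial f_i(\bar{x}_i)-g^i\); each \(C_i\) contains \(0\). The key elementary identity is
\[
\spn(C_1\times C_2)=\spn C_1\times\spn C_2 .
\]
For ``\(\subset\)'', note that the right-hand side is a subspace containing \(C_1\times C_2\). For ``\(\supset\)'', use \(0\in C_2\): this gives \(C_1\times\{0\}\subset C_1\times C_2\), hence \(\spn C_1\times\{0\}\subset\spn(C_1\times C_2)\). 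Symmetrically, \(\{0\}\times\spn C_2\) lies in the span, and summing the two pieces gives the claim. This yields \(\mathcal{V}f(\bar{x})=\mathcal{V}f_1(\bar{x}_1)\times\mathcal{V}f_2(\bar{x}_2)\).

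Third, I would take orthogonal complements in \(\mathbb{R}^{n_1}\times\mathbb{R}^{n_2}\) with the product inner product. The identity \((V_1\times V_2)^\perp=V_1^\perp\times V_2^\perp\) is immediate: testing against \((v_1,0)\) and \((0,v_2)\) gives one inclusion, and additivity of the inner product gives the other. This yields the \(\mathcal{U}\)-formula.

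The only point that needs care is the inclusion \(\supset\) in the span identity, which relies on \(0\in C_i\); that is exactly why the base point \(g\) is chosen inside the subdifferential. The rest is bookkeeping. After the proof I would also record the frames: a block-diagonal \(U_f=\diag(U_{f_1},U_{f_2})\) is an orthonormal frame of \(\mathcal{U}f(\bar{x})\), and
\[
\bar g_u(f)=\bigl(\bar g_u(f_1),\bar g_u(f_2)\bigr).
\]
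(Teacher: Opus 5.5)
Your proposal is correct and follows the paper's proof: the product formula of \cite[Proposition~10.5]{Rockafellar1998}, then the span identity applied to \(\partial f(\bar{x})-g\), then orthogonal complements in the product space. The one difference is that the paper first uses regularity and \cite[Theorem~8.30]{Rockafellar1998} to get \(df_i(\bar{x}_i)(0)=0\) before invoking Proposition~10.5, whereas you observe that the product formula for the limiting subdifferential needs only finiteness of \(f(\bar{x})\), which is automatic from \(\partial f_i(\bar{x}_i)\neq\emptyset\); regularity then plays no role in the space identities, and your explicit use of \(0\in C_i\) makes precise the span identity that the paper states without that proviso.
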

\begin{proof}
Subdifferential regularity of each \( f_i \) together with \( \partial f_i(\bar{x}_i)\neq\emptyset \) and \cite[Theorem~8.30]{Rockafellar1998} yields \( df_i(\bar{x}_i)(0)=\sigma_{\partial f_i(\bar{x}_i)}(0)=0 \).  
Thus the hypotheses of \cite[Proposition~10.5]{Rockafellar1998} hold, and we obtain
\[
\partial f(\bar{x})=\partial f_1(\bar{x}_1)\times\partial f_2(\bar{x}_2)\neq\emptyset,
\]
so \( f \) admits a \(\mathcal{VU}\)-decomposition.

Fix \( g=(g_1,g_2)\in\partial f(\bar{x}) \). By definition and the elementary identity \( \spn(A\times B)=\spn A\times\spn B \),
\begin{align*}
\mathcal{V}f(\bar{x})
&=\spn\bigl(\partial f(\bar{x})-g\bigr)
=\spn\bigl((\partial f_1(\bar{x}_1)-g_1)\times(\partial f_2(\bar{x}_2)-g_2)\bigr)\\
&=\spn(\partial f_1(\bar{x}_1)-g_1)\times\spn(\partial f_2(\bar{x}_2)-g_2)
=\mathcal{V}f_1(\bar{x}_1)\times\mathcal{V}f_2(\bar{x}_2).
\end{align*}
Taking orthogonal complements and using \( (V_1\times V_2)^\perp=V_1^\perp\times V_2^\perp \) gives the claimed formula for \( \mathcal{U}f(\bar{x}) \).
\end{proof}
If \(U_{f_i}\) frames \(\mathcal{U}f_i(\bar{x}_i)\), then
\(U_f=\operatorname{diag}(U_{f_1},U_{f_2})\) and
\(\bar g_u(f)=U_f^\top g\) for every \(g=(g_1,g_2)\) with \(g_i\in\partial f_i(\bar{x}_i)\).

\begin{proposition}[sum rule for convex functions]\label{prop:convex-sum}
Let \( f_1,f_2:\mathbb{R}^n\to\overline{\mathbb{R}} \) be proper, lsc and convex, and let \( x\in\mathbb{R}^n \) satisfy \( \partial f_1(x)\neq\emptyset \) and \( \partial f_2(x)\neq\emptyset \).
Assume either
\begin{enumerate}
\item[(R1')] \( x\in\ri(\dom f_1)\cap\ri(\dom f_2) \), or
\item[(R2')] \( \partial^\infty f_1(x)\cap\bigl(-\partial^\infty f_2(x)\bigr)=\{0\} \).
\end{enumerate}
Then \( f:=f_1+f_2 \) admits a \(\mathcal{VU}\)-decomposition at \( x \) and
\[
\mathcal{V}f(x)=\mathcal{V}f_1(x)+\mathcal{V}f_2(x),\qquad
\mathcal{U}f(x)=\mathcal{U}f_1(x)\cap\mathcal{U}f_2(x).
\]
In particular, the conclusion holds when \(f_1\) or \(f_2\) is real-valued or locally Lipschitz continuous near \(x\). 
\end{proposition}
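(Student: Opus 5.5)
The plan is to reduce the statement to the exact subdifferential sum rule and then repeat the span algebra used in Theorem~\ref{thm:chain} and Lemma~\ref{lem:separable}. The first step is to establish
\[
\partial f(x)=\partial f_1(x)+\partial f_2(x).
\]
In case (R1') this is the classical convex sum rule (Rockafellar, Convex Analysis, Theorem~23.8) under \(\ri(\dom f_1)\cap\ri(\dom f_2)\neq\emptyset\). In case (R2') it is \cite[Corollary~10.9]{Rockafellar1998}: both \(f_i\) are convex, hence subdifferentially regular, and the horizon qualification is exactly (R2'). Alternatively, (R2') can be routed through Theorem~\ref{thm:chain}. Write \(f=h\circ F\) with \(F(x)=(x,x)\) and \(h(y_1,y_2)=f_1(y_1)+f_2(y_2)\); then \(\ker F'^\top=\{(v,-v)\}\) and \(\partial^\infty h=\partial^\infty f_1\times\partial^\infty f_2\) by Lemma~\ref{lem:separable}'s source \cite[Proposition~10.5]{Rockafellar1998}. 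Under this route (R2') is precisely \eqref{eq:RW-CQ}. In both cases \(\partial f(x)\neq\emptyset\), so the \(\mathcal{VU}\)-decomposition exists.

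The second step is pure linear algebra. Fix \(g_i\in\partial f_i(x)\) and \(g=g_1+g_2\). Then \(\partial f(x)-g=(\partial f_1(x)-g_1)+(\partial f_2(x)-g_2)\), and the identity to use is \(\spn(C_1+C_2)=\spn C_1+\spn C_2\) whenever \(0\in C_1\cap C_2\). The inclusion \(\subset\) is clear. For \(\supset\), note that \(C_1=C_1+0\subset C_1+C_2\), and similarly for \(C_2\). This gives \(\mathcal{V}f=\mathcal{V}f_1+\mathcal{V}f_2\), and taking complements via \((V_1+V_2)^\perp=V_1^\perp\cap V_2^\perp\) gives the \(\mathcal{U}\)-formula.

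For the final sentence, suppose \(f_1\) is real-valued or locally Lipschitz near \(x\). Then \(\partial^\infty f_1(x)=\{0\}\), so (R2') holds trivially. A finite convex function is in addition locally Lipschitz, so both alternatives reduce to this case.

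The main obstacle is the case (R1'). The convex sum rule there needs a common relative-interior point, and \(x\) itself supplies one. One must also check that Rockafellar's convex subdifferential coincides with the limiting \(\partial\) for functions in \(\Gamma_0\). If the alternative composite route is used instead, the relevant condition is (R1) of Definition~\ref{def:R-hF}: \(F(x)\in\ri(\dom h)=\ri\dom f_1\times\ri\dom f_2\), but \(F\) maps into the diagonal, which need not lie in \(\aff(\dom h)\). For this reason I would cite the convex sum rule directly rather than Lemma~\ref{lem:R1-CR}.
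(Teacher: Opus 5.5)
Your proof is correct, but it is organised differently from the paper's. The paper does not state a sum rule for \(\partial f\) separately. It writes \(f=g\circ F\) with the diagonal map \(F(z)=(z,z)\) and \(g(y_1,y_2)=f_1(y_1)+f_2(y_2)\), and obtains \(\mathcal{U}g(x,x)=\mathcal{U}f_1(x)\times\mathcal{U}f_2(x)\) from Lemma~\ref{lem:separable}. It then argues that (R1') or (R2') places \(x\) in \(\mathcal{R}_{g,F}\), reads \(\mathcal{U}f(x)=\{d:(d,d)\in\mathcal{U}g(x,x)\}=\mathcal{U}f_1(x)\cap\mathcal{U}f_2(x)\) off Theorem~\ref{thm:chain}, and gets \(\mathcal{V}f(x)\) by complementation.

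You instead take \(\partial f(x)=\partial f_1(x)+\partial f_2(x)\) as the input: Theorem~23.8 of Rockafellar's \emph{Convex Analysis} under (R1'), \cite[Corollary~10.9]{Rockafellar1998} under (R2'), and \cite[Proposition~8.12]{Rockafellar1998} to identify the convex and limiting subdifferentials. You finish with \(\spn(C_1+C_2)=\spn C_1+\spn C_2\) for sets containing the origin. Both that identity and your translation of (R2') into the Corollary~10.9 qualification are right.

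The paper's route buys uniformity: the sum rule, like Proposition~\ref{prop:model}, becomes a corollary of the single chain rule. Yours buys a complete treatment of (R1'), and your hesitation about the composite route there is justified. (R1) of Definition~\ref{def:R-hF} needs \(F\) to map a full neighbourhood of \(x\) into \(\aff(\dom f_1)\times\aff(\dom f_2)\). For the diagonal map this forces both affine hulls to equal \(\mathbb{R}^n\). So the paper's reading of the neighbourhood clause inside \(\aff(\dom f_1)\cap\aff(\dom f_2)\) is not covered by Lemma~\ref{lem:R1-CR} as proved.

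For instance, take \(f_1=f_2=\delta_L\) with \(L\) a line in \(\mathbb{R}^2\) and \(x=0\). Then (R1') holds while \(x\notin\mathcal{R}_{g,F}\), yet the conclusion is true, as your argument shows. Citing Corollary~10.9 directly also avoids the formal clause \(F(x)\notin\ri(\dom g)\) in (R2), which the proof of Theorem~\ref{thm:chain} never uses.
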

\begin{proof}
Write \( f=g\circ F \) with \( F(z)=(z,z) \) and \( g(y_1,y_2)=f_1(y_1)+f_2(y_2) \).
Then \( g \) is proper, lsc and convex,
\[
\dom g=\dom f_1\times\dom f_2,\qquad
\partial g(x,x)=\partial f_1(x)\times\partial f_2(x)\neq\emptyset,
\]
and, since \( g \) is a separable convex sum,
\[
\partial^\infty g(x,x)=\partial^\infty f_1(x)\times\partial^\infty f_2(x).
\]
Convex functions are subdifferentially regular wherever the subdifferential is nonempty, so Lemma~\ref{lem:separable} gives
\(
\mathcal{U}g(x,x)=\mathcal{U}f_1(x)\times\mathcal{U}f_2(x).
\)
Here \(\ri(\dom g)=\ri(\dom f_1)\times\ri(\dom f_2)\), so
\(F(x)\in\ri(\dom g)\) if and only if (R1') holds.
The neighbourhood clause of (R1) is read in
\(\aff(\dom f_1)\cap\aff(\dom f_2)\): the diagonal map sends that
hull into \(\aff(\dom g)\), hence \(x\in\mathcal{R}_{g,F}\) under (R1').
On the other hand,
\[
F'(x)=\begin{pmatrix}I\\ I\end{pmatrix},\qquad
\ker F'(x)^\top=\bigl\{(u,-u):u\in\mathbb{R}^n\bigr\},
\]
hence
\begin{align*}
\ker F'(x)^\top\cap\partial^\infty g(F(x))
&=\bigl\{(u,-u):u\in\partial^\infty f_1(x),\ -u\in\partial^\infty f_2(x)\bigr\}\\
&=\partial^\infty f_1(x)\cap\bigl(-\partial^\infty f_2(x)\bigr).
\end{align*}
Thus \( \ker F'(x)^\top\cap\partial^\infty g(F(x))=\{0\} \) if and only if (R2') holds. Theorem  \ref{thm:chain} therefore applies and yields
\(
\mathcal{U}f(x)=\bigl\{d:F'(x)d\in\mathcal{U}g(F(x))\bigr\}.
\)
Since \( F'(x)d=(d,d) \) and \( \mathcal{U}g(F(x))=\mathcal{U}f_1(x)\times\mathcal{U}f_2(x) \),
\[
\mathcal{U}f(x)=\mathcal{U}f_1(x)\cap\mathcal{U}f_2(x).
\]
Taking orthogonal complements and using \( (U_1\cap U_2)^\perp=U_1^\perp+U_2^\perp \) for subspaces \(U_1,U_2\) we obtain
\[
\mathcal{V}f(x)=\mathcal{V}f_1(x)+\mathcal{V}f_2(x).
\] 

The special cases follow from the property that the horizon subdifferential becomes null under those circumstances. Specifically, if \(f_i\) is real-valued then as an lsc convex function it has \(\partial^\infty f_i(x)=N_{\dom f_i}(x)=\{0\}\). If \(f_i\) is locally Lipschitz continuous near \(x\) then \(\partial^\infty f_i(x)=\{0\}\).  
\end{proof}
If \(U_{f_i}\) frames \(\mathcal{U}f_i(x)\), set
\(A=(I-U_{f_2}U_{f_2}^\top)U_{f_1}\). Then
\(\mathcal{U}f(x)=U_{f_1}(\ker A)\) and \(U_f=U_{f_1}Q_0\), where the columns
of \(Q_0\) frame \(\ker A\). For every \(g_i\in\partial f_i(x)\),
\(\bar g_u(f)=U_f^\top(g_1+g_2)\).

For special nonconvex functions we have the following result. 
\begin{proposition}[strictly differentiable perturbation]
\label{prop:smooth-lsc}
Let \( f_1,f_2:\mathbb{R}^n\to\overline{\mathbb{R}} \) and \( f:=f_1+f_2 \). 
Assume that at a point \( x\in\mathbb{R}^n \)
the function \( f_1 \) is strictly differentiable, \( f_2 \) is locally lsc and \( \partial f_2(x)\neq\emptyset \).
Then
\[
\mathcal{V}f(x)=\mathcal{V}f_2(x),\qquad
\mathcal{U}f(x)=\mathcal{U}f_2(x).
\]
\end{proposition}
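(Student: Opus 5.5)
The argument rests on the subdifferential sum rule for a strictly differentiable term. We invoke \cite[Exercise~8.8(c)]{Rockafellar1998}: if \(f_1\) is strictly differentiable at \(x\) and \(f_2\) is finite at \(x\) and locally lsc there, then
\[
\partial f(x)=\nabla f_1(x)+\partial f_2(x).
\]
The proof is the linear algebra of Definition~\ref{def:vu} applied to this translation identity.

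First, the standing assumption \(\partial f_2(x)\neq\emptyset\) forces \(x\in\dom f_2\), so \(f_2(x)\) is finite. Strict differentiability of \(f_1\) at \(x\) makes \(f_1\) finite near \(x\), so \(f\) is finite at \(x\) and locally lsc there. The sum rule then gives
\[
\partial f(x)=\nabla f_1(x)+\partial f_2(x)\neq\emptyset.
\]
Hence \(f\) admits a \(\mathcal{VU}\)-decomposition at \(x\).

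Second, fix \(s\in\partial f_2(x)\) and put \(g:=\nabla f_1(x)+s\in\partial f(x)\). Then
\[
\partial f(x)-g=\partial f_2(x)-s,
\]
so taking spans yields \(\mathcal{V}f(x)=\mathcal{V}f_2(x)\). Taking orthogonal complements gives \(\mathcal{U}f(x)=\mathcal{U}f_2(x)\). Because Definition~\ref{def:vu} is independent of the chosen subgradient, the choice of \(s\) is immaterial.

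The only delicate point is confirming that the hypotheses of the cited sum rule are met as stated. Two things must hold:
\begin{itemize}
\item \(f_1\) may be extended-valued away from \(x\), but it is finite on a neighbourhood of \(x\). This follows from the definition of strict differentiability, since the Lipschitz-type estimate \(f_1(x')-f_1(x'')=\langle\nabla f_1(x),x'-x''\rangle+o(\|x'-x''\|)\) presupposes finiteness near \(x\).
\item The limiting subdifferential is the one used in the paper, which matches \cite{Rockafellar1998}.
\end{itemize}
No regularity of \(f_2\) is needed, since the identity is exact for limiting subdifferentials.

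Finally, we note the consequence for frames. Any frame \(U_{f_2}\) serves as \(U_f\), and the \(\mathcal{U}\)-gradients are related by
\[
\bar g_u(f)=U_{f_2}^\top\nabla f_1(x)+\bar g_u(f_2).
\]
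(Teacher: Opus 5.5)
Your argument is correct and is essentially the paper's own proof: the sum rule \(\partial f(x)=\nabla f_1(x)+\partial f_2(x)\), then translation by a fixed subgradient, taking spans, and passing to orthogonal complements. One fix is needed in the reference: \cite[Exercise~8.8(c)]{Rockafellar1998} requires the differentiable summand to be smooth (\(C^1\)) on a neighbourhood of \(x\), which strict differentiability at the single point \(x\) does not give, so the rule you need is \cite[Exercise~10.10]{Rockafellar1998}, the one the paper cites.
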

\begin{proof}
By \cite[Exercise 10.10]{Rockafellar1998} strict differentiability of \( f_1 \) at \( x \) yields
\[
\partial f(x)=\nabla f_1(x)+\partial f_2(x)\,.
\]
Since \(\partial f_2(x)\neq\emptyset\), \( f \) admits a \(\mathcal{VU}\)-decomposition at \( x \). Fix \( g_2\in\partial f_2(x) \) and set \( g:=\nabla f_1(x)+g_2\in\partial f(x) \). By definition,
\begin{align*}
\mathcal{V}f(x)
&=\spn\bigl(\partial f(x)-g\bigr)
=\spn\bigl(\nabla f_1(x)+\partial f_2(x)-\nabla f_1(x)-g_2\bigr)\\
&=\spn\bigl(\partial f_2(x)-g_2\bigr)
=\mathcal{V}f_2(x).
\end{align*}
Taking orthogonal complements yields \( \mathcal{U}f(x)=\mathcal{U}f_2(x) \).
\end{proof}
Thus \(U_f=U_{f_2}\) and
\(\bar g_u(f)=U_f^\top\bigl(\nabla f_1(x)+g_2\bigr)\) for every
\(g_2\in\partial f_2(x)\).

\begin{proposition}[structured model \(\delta_X+f_0+\theta\circ F\)]
\label{prop:model}
Let $X\subset\mathbb{R}^n$ be nonempty, closed and convex, let
$\theta\in\Gamma_0(\mathbb{R}^m)$, and let $f_0:\mathbb{R}^n\to\mathbb{R}$
and $F:\mathbb{R}^n\to\mathbb{R}^m$ be of class $C^1$.
Set $f:=\delta_X+f_0+\theta\circ F$ and assume $\bar{x}\in X$ satisfies $\partial\theta(F(\bar{x}))\neq\emptyset$.
Write \(H(x)=(x,F(x))\).
Assume either
\begin{enumerate}
\item[(M1)] \(\bar{x}\in\ri(X)\), \(F(\bar{x})\in\ri(\dom\theta)\), and
\(H\) maps a neighbourhood of \(\bar{x}\) into
\(\aff(X)\times\aff(\dom\theta)\), or
\item[(M2)]
\(\partial^\infty\theta(F(\bar{x}))
\cap
\bigl(F'(\bar{x})^\top\bigr)^{-1}\bigl(-N_X(\bar{x})\bigr)
=\{0\}\).
\end{enumerate}
Then $f$ admits a $\mathcal{VU}$-decomposition at $\bar{x}$ and
\begin{align*}
\mathcal{V}f(\bar{x})
&=F'(\bar{x})^\top\mathcal{V}\theta\bigl(F(\bar{x})\bigr)+\spn N_X(\bar{x}),\\
\mathcal{U}f(\bar{x})
&=F'(\bar{x})^{-1}\bigl(\mathcal{U}\theta(F(\bar{x}))\bigr)\cap\bigl(\spn N_X(\bar{x})\bigr)^\perp.
\end{align*}
\end{proposition}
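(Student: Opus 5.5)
The plan is to reduce to the chain rule (Theorem~\ref{thm:chain}) and the strictly differentiable perturbation (Proposition~\ref{prop:smooth-lsc}). By Proposition~\ref{prop:smooth-lsc}, with \(f_1=f_0\) (of class \(C^1\), hence strictly differentiable) and \(f_2=\delta_X+\theta\circ F\), it suffices to prove the two formulae for \(f_2\); the function \(f_2\) is lsc and its subdifferential will be shown nonempty below.

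We write \(f_2=g\circ H\) with \(H(x)=(x,F(x))\) of class \(C^1\) and \(g(y,z):=\delta_X(y)+\theta(z)\in\Gamma_0(\mathbb{R}^n\times\mathbb{R}^m)\). Then \(g\) is separable, and \(\partial g(H(\bar{x}))=N_X(\bar{x})\times\partial\theta(F(\bar{x}))\neq\emptyset\), because \(0\in N_X(\bar{x})\).

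The key step is to check that \(\bar{x}\in\mathcal{R}_{g,H}\), in two cases.

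\emph{Case (M1).} We have \(\ri(\dom g)=\ri X\times\ri(\dom\theta)\) and \(\aff(\dom g)=\aff X\times\aff(\dom\theta)\). Hence (M1) is literally (R1) for the pair \((g,H)\).

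\emph{Case (M2).} First, \(\partial^\infty g(H(\bar{x}))=N_X(\bar{x})\times\partial^\infty\theta(F(\bar{x}))\), using \(\partial^\infty\delta_X=N_X\) and the separable horizon rule already used in the proof of Proposition~\ref{prop:convex-sum}. Second, \(H'(\bar{x})^\top(a,b)=a+F'(\bar{x})^\top b\). So \((a,b)\) lies in \(\ker H'(\bar{x})^\top\cap\partial^\infty g\) exactly when \(a=-F'(\bar{x})^\top b\in N_X(\bar{x})\) and \(b\in\partial^\infty\theta(F(\bar{x}))\). This means \(b\) lies in the intersection appearing in (M2), and that intersection is \(\{0\}\); then \(a=0\) as well. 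Thus \eqref{eq:RW-CQ} holds.

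This settles (R2) when \(H(\bar{x})\notin\ri(\dom g)\). The one subtle point is the boundary-type bookkeeping: if \(H(\bar{x})\in\ri(\dom g)\) under (M2), we do not get (R1) directly. We only need \eqref{eq:CR}, however, and \eqref{eq:CR} follows from \eqref{eq:RW-CQ} by \cite[Theorem~10.6]{Rockafellar1998} regardless, since the proof of Theorem~\ref{thm:chain} uses \(\mathcal{R}\) only through \eqref{eq:CR}. I expect this to be the main obstacle, together with the correct horizon subdifferential of the separable sum. If the horizon rule for separable sums needs justification, I would note that for convex \(g\) we have \(\partial^\infty g=N_{\dom g}=N_X\times N_{\dom\theta}\) whenever \(\partial g\neq\emptyset\).

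Theorem~\ref{thm:chain}(i) and Lemma~\ref{lem:separable} then give
\[
\mathcal{V}f_2(\bar{x})=H'(\bar{x})^\top\bigl(\mathcal{V}\delta_X(\bar{x})\times\mathcal{V}\theta(F(\bar{x}))\bigr).
\]
We need \(\mathcal{V}\delta_X(\bar{x})=\spn N_X(\bar{x})\). This holds because \(0\in N_X(\bar{x})\), so \(\spn(N_X(\bar{x})-0)=\spn N_X(\bar{x})\). Applying \(H'(\bar{x})^\top(a,b)=a+F'(\bar{x})^\top b\) gives
\[
\mathcal{V}f_2(\bar{x})=\spn N_X(\bar{x})+F'(\bar{x})^\top\mathcal{V}\theta(F(\bar{x})).
\]
For the \(\mathcal{U}\)-space, the preimage form gives \(d\in\mathcal{U}f_2(\bar{x})\) if and only if \((d,F'(\bar{x})d)\in(\spn N_X(\bar{x}))^\perp\times\mathcal{U}\theta(F(\bar{x}))\). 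This is the stated intersection. Finally, Proposition~\ref{prop:smooth-lsc} transfers both formulae to \(f\).
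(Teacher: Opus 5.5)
Your proposal is correct and follows the paper's proof essentially step for step. You peel off \(f_0\) with Proposition~\ref{prop:smooth-lsc}, write \(\delta_X+\theta\circ F=\psi\circ H\) with the separable \(\psi(u,v)=\delta_X(u)+\theta(v)\), and check the qualification through \(\partial^\infty\psi(H(\bar{x}))=N_X(\bar{x})\times\partial^\infty\theta(F(\bar{x}))\) and \(\ker H'(\bar{x})^\top\); you then apply Theorem~\ref{thm:chain}(i) with Lemma~\ref{lem:separable}, and your extra care in the case where (M2) holds but \(H(\bar{x})\in\ri(\dom\psi)\) is well placed. In that case (R2) fails literally, and (R1) can fail too when the neighbourhood clause does, so the paper's claim that (M1) or (M2) is ``precisely'' \(\bar{x}\in\mathcal{R}_{\psi,H}\) is slightly too strong; your direct appeal to \cite[Theorem~10.6]{Rockafellar1998} for \eqref{eq:CR} is the right repair.
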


\begin{proof}
Since $f_0$ is $C^1$, it is strictly differentiable at $\bar{x}$. Proposition~\ref{prop:smooth-lsc} applied to $f=f_0+(\delta_X+\theta\circ F)$ gives
\[
\mathcal{V}f(\bar{x})=\mathcal{V}(\delta_X+\theta\circ F)(\bar{x}),
\qquad
\mathcal{U}f(\bar{x})=\mathcal{U}(\delta_X+\theta\circ F)(\bar{x}),
\]
once the $\mathcal{VU}$-decomposition of $\delta_X+\theta\circ F$ is known to exist.

Write $\delta_X+\theta\circ F=\psi\circ H$ with $\psi(u,v)=\delta_X(u)+\theta(v)$.
Then $\psi$ is proper, lsc and convex, and
$\partial\psi(H(\bar{x}))=N_X(\bar{x})\times\partial\theta(F(\bar{x}))\neq\emptyset$.
Convex functions are subdifferentially regular wherever the subdifferential is nonempty, so Lemma~\ref{lem:separable} yields
\[
\mathcal{V}\psi(H(\bar{x}))=\spn N_X(\bar{x})\times\mathcal{V}\theta(F(\bar{x})),
\qquad
\mathcal{U}\psi(H(\bar{x}))=\bigl(\spn N_X(\bar{x})\bigr)^\perp\times\mathcal{U}\theta(F(\bar{x})),
\]
where we have used $\mathcal{V}\delta_X(\bar{x})=\spn N_X(\bar{x})$.

The assumption (M1) or (M2) is precisely \(\bar{x}\in\mathcal{R}_{\psi,H}\).
Indeed, \(\dom\psi=X\times\dom\theta\), hence
\(\ri(\dom\psi)=\ri(X)\times\ri(\dom\theta)\) and
\(\aff(\dom\psi)=\aff(X)\times\aff(\dom\theta)\).
Thus (M1) is exactly (R1) for \(\psi\circ H\).
On the other hand,
$H'(\bar{x})=\begin{pmatrix}I\\ F'(\bar{x})\end{pmatrix}$ and
$\ker H'(\bar{x})^\top=\{(-F'(\bar{x})^\top b,\,b):b\in\mathbb{R}^m\}$,
while $\partial^\infty\psi(H(\bar{x}))=N_X(\bar{x})\times\partial^\infty\theta(F(\bar{x}))$.
A pair $(-F'(\bar{x})^\top b,\,b)$ lies in this product if and only if
$b\in\partial^\infty\theta(F(\bar{x}))$ and $F'(\bar{x})^\top b\in -N_X(\bar{x})$.
Therefore $\ker H'(\bar{x})^\top\cap\partial^\infty\psi(H(\bar{x}))=\{0\}$ if and only if (M2) holds.

Theorem~\ref{thm:chain} applied to $\psi\circ H$ now gives
\begin{align*}
\mathcal{V}(\delta_X+\theta\circ F)(\bar{x})
&=H'(\bar{x})^\top\mathcal{V}\psi(H(\bar{x}))
=\spn N_X(\bar{x})+F'(\bar{x})^\top\mathcal{V}\theta(F(\bar{x})),\\
\mathcal{U}(\delta_X+\theta\circ F)(\bar{x})
&=\{y:H'(\bar{x})y\in\mathcal{U}\psi(H(\bar{x}))\}
=\bigl(\spn N_X(\bar{x})\bigr)^\perp
\cap
F'(\bar{x})^{-1}\bigl(\mathcal{U}\theta(F(\bar{x}))\bigr).
\end{align*}
Combining this with the reduction by $f_0$ completes the proof.
\end{proof}
Let the columns of \(N_{\bar{x}}\) span \(\spn N_X(\bar{x})\) and those of
\(V_\theta\) span \(\mathcal{V}\theta\bigl(F(\bar{x})\bigr)\). Then
\(\mathcal{U}f(\bar{x})=\ker A\) for
\[
A=\begin{pmatrix} N_{\bar{x}}^\top \\ V_\theta^\top F'(\bar{x})\end{pmatrix},
\]
a frame \(U_f\) is given by Remark~\ref{lem:frame-ker}, and for every
\(s\in\partial\theta\bigl(F(\bar{x})\bigr)\),
\(\bar g_u(f)=U_f^\top\bigl(\nabla f_0(\bar{x})+F'(\bar{x})^\top s\bigr)\).

\subsection{A soft-margin example}
\label{ss:frames}

The next example records the \(\mathcal{VU}\)-spaces, an orthonormal frame
and the \(\mathcal{U}\)-gradient of a convex classification head on frozen
features, via Proposition~\ref{prop:model}. The \(\mathcal{U}\)-Hessian is
taken up at the end of Section~\ref{sec:ps}.

\begin{example}[soft-margin classifier on frozen features]
\label{ex:probe}
Let \(\phi:\mathcal{X}\to\mathbb{R}^n\) be a fixed feature map (for instance
the output of a trained network, held constant). Given labelled samples
\(\{(x_i,y_i)\}_{i=1}^N\subset\mathcal{X}\times\{\pm1\}\) and \(C>0\), set
\[
f(w,b)
=
\frac12\|w\|^2
+
C\sum_{i=1}^N
\max\bigl\{0,\,1-y_i(\langle w,\phi(x_i)\rangle+b)\bigr\},
\qquad
(w,b)\in\mathbb{R}^n\times\mathbb{R}.
\]
This is the unconstrained hinge form of the primal soft-margin objective
on frozen features \(\phi(x_i)\) \cite[(24)]{Cortes1995}.
It is the structured model
\(f=\delta_X+f_0+\theta\circ F\) with \(X=\mathbb{R}^{n+1}\),
\(f_0(w,b)=\frac12\|w\|^2\),
\[
F:\mathbb{R}^{n+1}\to\mathbb{R}^N,
\qquad
F(w,b)
=
\bigl(
1-y_i(\langle w,\phi(x_i)\rangle+b)
\bigr)_{i=1}^N,
\]
and \(\theta(z)=C\sum_{i=1}^N\max\{0,z_i\}\).
Here \(\delta_X\equiv0\), \(f_0\) is \(C^2\), \(F\) is affine and
\(\theta\in\Gamma_0(\mathbb{R}^N)\) is finite everywhere, so
\(\aff(\dom\theta)=\mathbb{R}^N\) and every point belongs to
\(\mathcal{R}_{\theta,F}\).

\noindent\textbf{\(\mathcal{VU}\)-spaces.}
At a point \((\bar w,\bar b)\) write
\[
I_0
:=
\bigl\{i:F_i(\bar w,\bar b)=0\bigr\},
\qquad
I_>
:=
\bigl\{i:F_i(\bar w,\bar b)>0\bigr\},
\qquad
I_<
:=
\bigl\{i:F_i(\bar w,\bar b)<0\bigr\}
\]
for the active, violated and inactive margin sets. Write \(\sigma(t)=\max\{0,t\}\), so
that \(\theta(z)=C\sum_{i=1}^N\sigma(z_i)\). The convex subdifferential of
\(\sigma\) is \(\{0\}\) for \(t<0\), \(\{1\}\) for \(t>0\), and \([0,1]\) at
the origin. Lemma~\ref{lem:separable} therefore yields
\[
\partial\theta\bigl(F(\bar w,\bar b)\bigr)
=
C\prod_{i=1}^N\partial\sigma\bigl(F_i(\bar w,\bar b)\bigr),
\]
which is a product of singletons except in the coordinates \(i\in I_0\),
where the factor is the interval \([0,C]\). The subspace parallel to the
affine hull of this product is
\[
\mathcal{V}\theta\bigl(F(\bar w,\bar b)\bigr)
=
\spn\{e_i:i\in I_0\},
\]
and its orthogonal is
\(\mathcal{U}\theta(F(\bar w,\bar b))=\{z\in\mathbb{R}^N:z_i=0\text{ for all }i\in I_0\}\).
Active hinges therefore contribute independent directions \(e_i\), not
merely gradient differences as in Proposition~\ref{prop:max}.

The mapping \(F\) is affine, with \(i\)th row
\(\bigl(-y_i\phi(x_i)^\top,\,-y_i\bigr)\). Proposition~\ref{prop:model} gives
\(\mathcal{V}f(\bar w,\bar b)=F'(\bar{w},\bar{b})^\top\mathcal{V}\theta(F(\bar w,\bar b))\).
The generators are
\[
F'(\bar{w},\bar{b})^\top e_i
=
\bigl(-y_i\phi(x_i),\,-y_i\bigr),
\qquad
i\in I_0,
\]
and changing sign does not alter the span, so
\[
\mathcal{V}f(\bar w,\bar b)
=
\spn
\bigl\{
\bigl(y_i\phi(x_i),\,y_i\bigr):i\in I_0
\bigr\}.
\]
The \(\mathcal{U}\)-space is the preimage
\(\mathcal{U}f(\bar w,\bar b)=F'(\bar{w},\bar{b})^{-1}\bigl(\mathcal{U}\theta(F(\bar w,\bar b))\bigr)\).
A pair \((dw,db)\) lies in this preimage if and only if every active
coordinate of \(F'(\bar{w},\bar{b})(dw,db)\) vanishes, that is
\[
-y_i\bigl(\langle dw,\phi(x_i)\rangle+db\bigr)=0
\quad\text{for all }i\in I_0.
\]
Hence
\[
\mathcal{U}f(\bar w,\bar b)
=
\bigl\{
(dw,db)
:
y_i\bigl(\langle dw,\phi(x_i)\rangle+db\bigr)=0
\text{ for all }i\in I_0
\bigr\}.
\]

\noindent\textbf{Frames and the \(\mathcal{U}\)-gradient.}
\label{ex:probe-frame}
The kernel matrix recorded after Proposition~\ref{prop:model} has one row
\(y_i\bigl(\phi(x_i)^\top,\,1\bigr)\) for each \(i\in I_0\); then
\(\mathcal{U}f(\bar w,\bar b)=\ker A\) and an orthonormal frame \(U_f\) is
furnished by Remark~\ref{lem:frame-ker}.

A subgradient of \(f\) is
\[
(\bar w,0)
+
\sum_{i\in I_>}C\bigl(-y_i\phi(x_i),-y_i\bigr)
+
\sum_{i\in I_0}\lambda_i\bigl(-y_i\phi(x_i),-y_i\bigr),
\qquad
\lambda_i\in[0,C].
\]
The last sum lies in \(\mathcal{V}f(\bar w,\bar b)\).
The formula for \(\bar g_u(f)\) after Proposition~\ref{prop:model} therefore gives
\[
\bar g_u(f)
=
U_f^\top
\Biggl(
(\bar w,0)
+
\sum_{i\in I_>}C\bigl(-y_i\phi(x_i),-y_i\bigr)
\Biggr).
\]
When \(I_>=\emptyset\) the second summand vanishes. Partitioning
\(U_f=\begin{pmatrix}U_w\\ u_b^\top\end{pmatrix}\) with
\(U_w\in\mathbb{R}^{n\times n_U}\) and \(n_U=\dim\mathcal{U}f(\bar w,\bar b)\) then yields
\[
\bar g_u(f)
=
U_w^\top\bar w,
\qquad
\bigl\langle\bar g_u(f),U_f^\top(dw,db)\bigr\rangle
=
\langle\bar w,dw\rangle
\quad\text{for all }(dw,db)\in\mathcal{U}f(\bar w,\bar b).
\]
(The pairing contains no \(db\) term because the \(b\)-component of
\(\nabla f_0\) is zero; the constraint on \(db\) is already encoded in
\(\mathcal{U}f\).)
\end{example}

\subsection{\texorpdfstring{$\varepsilon$}{epsilon}-\texorpdfstring{\(\mathcal{VU}\)}{VU}-spaces}
\label{ss:eps-VU}

The chain rule of Theorem~\ref{thm:chain} is exact. The same linear algebra applies to an outer approximation of the subdifferential.
The construction below does not depend on sublinearity of the outer function, nor on polyhedral enlargements; it only uses a convex-valued outer approximation of \(\partial f\).

\begin{definition}[proper approximation of the subdifferential]
\label{def:proper-approx}
Let \(f:\mathbb{R}^n\to\overline{\mathbb{R}}\) be proper and lsc.
A set-valued mapping \((x,\varepsilon)\mapsto\bar\partial_\varepsilon f(x)\) with convex values is a
\emph{proper approximation} of \(\partial f\) if
\[
\partial f(x)\subset\bar\partial_\varepsilon f(x)
\qquad\text{for all }x\text{ and all }\varepsilon\ge 0,
\qquad
\bar\partial_0 f(x)=\partial f(x).
\]
\end{definition}

Two standard instances are the convex \(\varepsilon\)-subdifferential when \(f\) is finite and convex, and the set of \(\varepsilon\)-regular subgradients of \cite[Proposition~10.46]{Rockafellar1998}.
We do not impose the sandwich
\(\partial f(x)\subseteq\bar\partial_\varepsilon f(x)\subseteq\partial_\varepsilon f(x)\)
of \cite{Liu2019Subdifferential}, and we do not discuss continuity of the
spaces in \((x,\varepsilon)\). Those questions, and the choice of a small
structure-specific enlargement, remain those of that paper.

\begin{definition}[\(\varepsilon\)-\(\mathcal{VU}\) decomposition]
\label{def:epsVU}
Let \(f\) be proper and lsc, let \(\bar x\) satisfy \(\partial f(\bar x)\neq\emptyset\), and let \(\bar\partial_\varepsilon f\) be a proper approximation of \(\partial f\).
The associated \(\varepsilon\)-\(\mathcal{VU}\) spaces at \(x\) (with \(\bar\partial_\varepsilon f(x)\neq\emptyset\)) are
\begin{align}
\mathcal{V}_\varepsilon f(x)
&:=
\spn\bigl(\bar\partial_\varepsilon f(x)-g\bigr),
\qquad
g\in\bar\partial_\varepsilon f(x)\text{ arbitrary},
\label{def:vep}\\
\mathcal{U}_\varepsilon f(x)
&:=
N_{\bar\partial_\varepsilon f(x)}(g^\circ),
\qquad
g^\circ\in\ri\bar\partial_\varepsilon f(x)\text{ arbitrary}.
\label{def:uep}
\end{align}
\end{definition}

\begin{proposition}[equivalent descriptions of the \(\varepsilon\)-\(\mathcal{U}\)-space]
\label{prop:uepsaltdef}
In Definition~\ref{def:epsVU},
\begin{align}
\mathcal{U}_\varepsilon f(x)
&=
\bigl\{
w
:
\langle g-g^\circ,w\rangle=0
\text{ for all }g\in\bar\partial_\varepsilon f(x)
\bigr\}
\quad\text{for every }g^\circ\in\ri\bar\partial_\varepsilon f(x),
\label{eq:epsUfequi}\\
\mathcal{U}_\varepsilon f(x)
&=
\mathcal{V}_\varepsilon f(x)^\perp.
\label{eq:4}
\end{align}
In particular both spaces are independent of the auxiliary vectors \(g\) and \(g^\circ\), and \(\mathbb{R}^n=\mathcal{V}_\varepsilon f(x)\oplus\mathcal{U}_\varepsilon f(x)\).
\end{proposition}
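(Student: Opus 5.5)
Write \(C:=\bar\partial_\varepsilon f(x)\), a nonempty convex set, and \(L:=\aff C-g\) for any \(g\in C\). The subspace \(L\) does not depend on \(g\), and \(L=\spn(C-g)=\mathcal{V}_\varepsilon f(x)\): the affine hull of a set through \(g\), translated by \(-g\), is the span of the differences. So \(\mathcal{V}_\varepsilon f(x)\) is independent of \(g\). Since \(C\) is nonempty and convex, \(\ri C\neq\emptyset\) by the standard Rockafellar result, so some \(g^\circ\) exists. This is the first step.

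For \eqref{eq:epsUfequi}, fix \(g^\circ\in\ri C\). Recall \(N_C(g^\circ)=\{w:\langle g-g^\circ,w\rangle\le 0\ \forall g\in C\}\). The inclusion ``\(\supseteq\)'' is immediate. For ``\(\subseteq\)'', take \(w\in N_C(g^\circ)\) and \(g\in C\). Because \(g^\circ\in\ri C\), there is \(t>0\) with \(g^\circ-t(g-g^\circ)\in C\): the direction \(g-g^\circ\) lies in \(L\), and \(g^\circ\) has a relative neighbourhood in \(C\) within \(g^\circ+L\). Applying the normal inequality to this point gives \(-t\langle g-g^\circ,w\rangle\le0\). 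Together with \(\langle g-g^\circ,w\rangle\le0\), this yields equality to \(0\).

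Then \eqref{eq:4} follows. The set in \eqref{eq:epsUfequi} is the orthogonal complement of \(\{g-g^\circ:g\in C\}\), which equals \((\spn(C-g^\circ))^\perp=\mathcal{V}_\varepsilon f(x)^\perp\), by the first step with \(g=g^\circ\). The right-hand side does not involve \(g^\circ\), so \(\mathcal{U}_\varepsilon f(x)\) is independent of the choice. The direct sum \(\mathbb{R}^n=\mathcal{V}_\varepsilon f(x)\oplus\mathcal{U}_\varepsilon f(x)\) is then the orthogonal decomposition of a subspace and its complement.

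The only delicate point is the two-sided extension step, which uses the relative-interior property of \(g^\circ\). I would check that the definition of \(\ri\) in use (Rockafellar's, for the convex set \(C\), matching Remark~\ref{rem:ri-Rock}) gives a ball in \(g^\circ+L\) contained in \(C\). Then for small \(t\) the point \(g^\circ-t(g-g^\circ)\) lies in that ball.
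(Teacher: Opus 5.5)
Your proposal is correct and follows the paper's route. You write \(N_C(g^\circ)\) via the convex normal-cone inequality. You then move from \(g^\circ\) in the direction \(-(g-g^\circ)\in\mathcal{V}_\varepsilon f(x)\), which stays in \(C\) by the relative-interior ball, to get the reverse inequality. Finally you read off \(\mathcal{U}_\varepsilon f(x)\) as the annihilator of \(\spn(C-g^\circ)=\mathcal{V}_\varepsilon f(x)\). Your small step \(t>0\), in place of the paper's step of length exactly \(\eta\), keeps the test point strictly inside the open ball \(\inte(\eta\mathbb{B})\) of the working definition of \(\ri\), so it avoids the paper's open-versus-closed-ball slip.
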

\begin{proof}
The set \(\bar\partial_\varepsilon f(x)\) is convex by Definition~\ref{def:proper-approx}, so the normal cone in \eqref{def:uep} is
\[
\mathcal{U}_\varepsilon f(x)
=
\bigl\{
w
:
\langle g-g^\circ,w\rangle\le 0
\text{ for all }g\in\bar\partial_\varepsilon f(x)
\bigr\}.
\]
This contains the right-hand side of \eqref{eq:epsUfequi}. For the reverse inclusion, fix \(w\) in the normal cone and \(g\in\bar\partial_\varepsilon f(x)\). If \(g=g^\circ\) there is nothing to prove. Otherwise the relative-interior description of \(g^\circ\) yields \(\eta>0\) such that
\[
g^\circ+\bigl(\eta\mathbb{B}\cap\mathcal{V}_\varepsilon f(x)\bigr)
\subset
\bar\partial_\varepsilon f(x).
\]
The unit vector \(v:=-(g-g^\circ)/\|g-g^\circ\|\) lies in \(\mathcal{V}_\varepsilon f(x)\), hence \(g^\circ+\eta v\in\bar\partial_\varepsilon f(x)\). The normal-cone inequality at this point reads
\[
0
\ge
\langle w,\eta v\rangle
=
-\frac{\eta}{\|g-g^\circ\|}\langle g-g^\circ,w\rangle,
\]
so \(\langle g-g^\circ,w\rangle\ge 0\). Combined with the opposite inequality already available, one obtains \(\langle g-g^\circ,w\rangle=0\). Replacing \(g^\circ\) by any other point of the relative interior does not change the annihilator of \(\bar\partial_\varepsilon f(x)-g^\circ\), so \(\mathcal{U}_\varepsilon f(x)\) is independent of that choice.

If \(w\in\mathcal{V}_\varepsilon f(x)^\perp\), then \(\langle w,z-g^\circ\rangle=0\) for every \(z\in\aff\bar\partial_\varepsilon f(x)\), hence for every \(z\in\bar\partial_\varepsilon f(x)\), and \(w\) satisfies \eqref{eq:epsUfequi}. Conversely, \eqref{eq:epsUfequi} annihilates the span of \(\bar\partial_\varepsilon f(x)-g^\circ\), which is \(\mathcal{V}_\varepsilon f(x)\). Thus \(\mathcal{U}_\varepsilon f(x)=\mathcal{V}_\varepsilon f(x)^\perp\).
\end{proof}

When \(f=h\circ F\) and \(\bar x\in\mathcal{R}_{h,F}\), the exact chain rule \eqref{eq:CR} pushes a proper approximation of \(\partial h\) forward to a proper approximation of \(\partial f\):
\begin{equation}
\label{eq:bardfeps}
\bar\partial_\varepsilon f(x)
\;:=\;
F'(x)^\top\bar\partial_\varepsilon h\bigl(F(x)\bigr),
\qquad
x\in\dom(h\circ F).
\end{equation}
Indeed \(\partial f(x)=F'(x)^\top\partial h(F(x))\subset F'(x)^\top\bar\partial_\varepsilon h(F(x))\) on \(\mathcal{R}_{h,F}\), and the two sides coincide at \(\varepsilon=0\). The values remain convex.

\begin{proposition}[\(\varepsilon\)-\(\mathcal{VU}\) chain rule]
\label{prop:epsVUchain}
Let \(f=h\circ F\) with \(h\in\Gamma_0(\mathbb{R}^m)\) and \(F:\mathbb{R}^n\to\mathbb{R}^m\) of class \(C^1\).
Let \(\bar\partial_\varepsilon h\) be a proper approximation of \(\partial h\), and define \(\bar\partial_\varepsilon f\) by \eqref{eq:bardfeps}.
If \(x\in\mathcal{R}_{h,F}\) and \(\bar\partial_\varepsilon h(F(x))\neq\emptyset\), then
\begin{align}
\mathcal{V}_\varepsilon f(x)
&=
F'(x)^\top\mathcal{V}_\varepsilon h\bigl(F(x)\bigr),
\label{eq:uvefcom}\\
\mathcal{U}_\varepsilon f(x)
&=
\bigl\{
y
:
F'(x)y\in\mathcal{U}_\varepsilon h\bigl(F(x)\bigr)
\bigr\}.
\label{eq:Uepsfbyh}
\end{align}
\end{proposition}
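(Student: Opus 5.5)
The argument repeats the linear algebra of Theorem~\ref{thm:chain}(i), applied to the convex set \(\bar\partial_\varepsilon f(x)\) instead of \(\partial f(x)\).

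First, \(\bar\partial_\varepsilon f(x)\) is nonempty, since it is the image under \(F'(x)^\top\) of the nonempty set \(\bar\partial_\varepsilon h(F(x))\). It is convex as a linear image of a convex set. By the remark after \eqref{eq:bardfeps}, on \(\mathcal{R}_{h,F}\) it is a proper approximation of \(\partial f\), using \eqref{eq:CR}, which holds via Lemma~\ref{lem:R1-CR} in case (R1) and via \cite[Theorem~10.6]{Rockafellar1998} in case (R2). Hence Definition~\ref{def:epsVU} and Proposition~\ref{prop:uepsaltdef} apply both to \(f\) at \(x\) and to \(h\) at \(F(x)\).

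Fix \(s\in\bar\partial_\varepsilon h(F(x))\) and put \(g:=F'(x)^\top s\in\bar\partial_\varepsilon f(x)\). By \eqref{def:vep}, which is independent of the choice of \(g\) by Proposition~\ref{prop:uepsaltdef},
\[
\mathcal{V}_\varepsilon f(x)=\spn\bigl(F'(x)^\top(\bar\partial_\varepsilon h(F(x))-s)\bigr)=F'(x)^\top\spn\bigl(\bar\partial_\varepsilon h(F(x))-s\bigr)=F'(x)^\top\mathcal{V}_\varepsilon h(F(x)).
\]
The middle equality is the identity \(\spn(AC)=A\spn C\) used in Theorem~\ref{thm:chain}. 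This gives \eqref{eq:uvefcom}.

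For \eqref{eq:Uepsfbyh}, I will not work with the normal cone of Definition~\ref{def:epsVU} directly. That route would require checking that \(F'(x)^\top\) maps relative interiors onto relative interiors. This is true for convex sets by \cite[Theorem~6.6]{Rockafellar1998}-type arguments, but it is unnecessary here. Instead I use \eqref{eq:4} on both sides: \(\mathcal{U}_\varepsilon f(x)=\mathcal{V}_\varepsilon f(x)^\perp\) and \(\mathcal{U}_\varepsilon h(F(x))=\mathcal{V}_\varepsilon h(F(x))^\perp\). Then the identity \((MV)^\perp=\{y:M^\top y\in V^\perp\}\), applied with \(M=F'(x)^\top\) and \(V=\mathcal{V}_\varepsilon h(F(x))\), gives the preimage formula.

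The only point requiring care is applying Proposition~\ref{prop:uepsaltdef}. It needs \(\ri\bar\partial_\varepsilon f(x)\) to be nonempty, so that \(g^\circ\) exists in \eqref{def:uep}. This holds because a nonempty convex set in finite dimensions has nonempty relative interior. With that, everything reduces to finite-dimensional linear algebra, and no further qualification is needed beyond \(x\in\mathcal{R}_{h,F}\). That hypothesis is used only to certify that \eqref{eq:bardfeps} is a proper approximation.
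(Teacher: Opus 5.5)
Your proposal is correct and follows the paper's proof. It uses nonemptiness and convexity of the push-forward \eqref{eq:bardfeps}, then \(\spn(AC)=A\,\spn C\) for \eqref{eq:uvefcom}, then the orthogonal complement via \eqref{eq:4} and \((MV)^\perp=\{y:M^\top y\in V^\perp\}\) for \eqref{eq:Uepsfbyh}, with \(x\in\mathcal{R}_{h,F}\) used only to make \eqref{eq:bardfeps} a proper approximation. One minor slip: the fact \(\ri(AC)=A\,\ri C\) you allude to is Theorem~6.6 of Rockafellar's \emph{Convex Analysis}, not of Rockafellar--Wets, though, as you note, it is not needed.
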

\begin{proof}
The set \(\bar\partial_\varepsilon f(x)\) is nonempty and convex, so both \(\varepsilon\)-\(\mathcal{VU}\) decompositions are defined.
Pick \(s\in\bar\partial_\varepsilon h(F(x))\) and set \(g:=F'(x)^\top s\in\bar\partial_\varepsilon f(x)\). Then
\begin{align*}
\mathcal{V}_\varepsilon f(x)
&=
\spn\bigl(\bar\partial_\varepsilon f(x)-g\bigr)
=
\spn\bigl(F'(x)^\top\bigl(\bar\partial_\varepsilon h(F(x))-s\bigr)\bigr)\\
&=
F'(x)^\top\spn\bigl(\bar\partial_\varepsilon h(F(x))-s\bigr)
=
F'(x)^\top\mathcal{V}_\varepsilon h\bigl(F(x)\bigr),
\end{align*}
where the third equality is \(\spn(AC)=A(\spn C)\).
The identity for \(\mathcal{U}_\varepsilon f(x)\) is then the orthogonal complement, exactly as in Theorem~\ref{thm:chain}(i), using Proposition~\ref{prop:uepsaltdef}.
The only additional ingredient is that \eqref{eq:CR} remains available on \(\mathcal{R}_{h,F}\), so that \eqref{eq:bardfeps} is a proper approximation.
\end{proof}

The exact spaces of Theorem~\ref{thm:chain} are the case \(\varepsilon=0\). No rank assumption on \(F'(x)\) is used. If \(F'(x)\) has full row rank, the same argument as in Theorem~\ref{thm:chain}(ii) gives \(\mathcal{U}_\varepsilon h(F(x))=F'(x)\,\mathcal{U}_\varepsilon f(x)\).

\begin{remark}[\(\varepsilon\)-\(\mathcal{VU}\) forms of the preceding rules]
\label{rem:eps-sums-frames}
Assume \(\bar\partial_\varepsilon(\,\cdot\,)\) is a proper approximation
in the sense of Definition~\ref{def:proper-approx}.
The identities of this section persist after the substitutions below,
in the same order as the exact rules: finite max, separable sum, convex
sum, strictly differentiable perturbation, structured model, then frames
and the \(\mathcal{U}\)-gradient.
\begin{enumerate}[label=\textnormal{(\alph*)}]
\item
\emph{Finite max}
(Proposition~\ref{prop:max}).
Write \(I_\varepsilon:=\{i:f_i(\bar{x})\ge f(\bar{x})-\varepsilon\}\)
for the \(\varepsilon\)-active index set. For
\(\bar\partial_\varepsilon f=\conv\{\nabla f_i:i\in I_\varepsilon\}\)
one has
\(\mathcal{V}_\varepsilon f=\spn\{\nabla f_i-\nabla f_j:i,j\in I_\varepsilon\}\)
and
\(\mathcal{U}_\varepsilon f=\{y:\langle\nabla f_i(\bar{x}),y\rangle\text{ coincide for }i\in I_\varepsilon\}\);
cf.\ \cite[§5.4]{Liu2019Subdifferential}.
\item
\emph{Separable sum}
(Lemma~\ref{lem:separable}).
\(\bar\partial_\varepsilon f:=\bar\partial_\varepsilon f_1\times\bar\partial_\varepsilon f_2\)
gives
\(\mathcal{V}_\varepsilon f=\mathcal{V}_\varepsilon f_1\times\mathcal{V}_\varepsilon f_2\)
and
\(\mathcal{U}_\varepsilon f=\mathcal{U}_\varepsilon f_1\times\mathcal{U}_\varepsilon f_2\).
Regularity of each factor is required at \(\varepsilon=0\).
\item
\emph{Convex sum}
(Proposition~\ref{prop:convex-sum}).
Under (R1') or (R2'),
\(\bar\partial_\varepsilon f:=\bar\partial_\varepsilon f_1+\bar\partial_\varepsilon f_2\)
gives
\(\mathcal{V}_\varepsilon f=\mathcal{V}_\varepsilon f_1+\mathcal{V}_\varepsilon f_2\)
and
\(\mathcal{U}_\varepsilon f=\mathcal{U}_\varepsilon f_1\cap\mathcal{U}_\varepsilon f_2\).
\item
\emph{Strictly differentiable perturbation}
(Proposition~\ref{prop:smooth-lsc}).
\(\bar\partial_\varepsilon f:=\nabla f_1+\bar\partial_\varepsilon f_2\)
gives
\(\mathcal{V}_\varepsilon f=\mathcal{V}_\varepsilon f_2\)
and
\(\mathcal{U}_\varepsilon f=\mathcal{U}_\varepsilon f_2\).
\item
\emph{Structured model}
(Proposition~\ref{prop:model}).
With \(\bar\partial_\varepsilon\delta_X=N_X\) and
\(\bar\partial_\varepsilon(\theta\circ F)=F'^\top\bar\partial_\varepsilon\theta\),
the alternatives (M1) or (M2) give
\(\mathcal{V}_\varepsilon f=F'^\top\mathcal{V}_\varepsilon\theta+\spn N_X\)
and
\(\mathcal{U}_\varepsilon f=F'^{-1}(\mathcal{U}_\varepsilon\theta)\cap(\spn N_X)^\perp\).
Enlarging \(N_X\) to an \(\varepsilon\)-normal cone changes the second
summand.
\item
\emph{Frames.}
The kernel matrices recorded after each rule, with \(\varepsilon\)-frames
in place of exact frames, have kernel \(\mathcal{U}_\varepsilon f\).
\item
\emph{\(\mathcal{U}\)-gradient.}
\(\bar g_u^\varepsilon(f):=U_{\varepsilon,f}^\top g\)
is independent of \(g\in\bar\partial_\varepsilon f\), and equals the
exact \(\mathcal{U}\)-gradient projected onto \(\mathcal{U}_\varepsilon f\)
whenever \(\mathcal{U}_\varepsilon f\subset\mathcal{U}f\).
\end{enumerate}
Continuity in \((x,\varepsilon)\) is not claimed.
\end{remark}

\noindent\textit{Continuation of Example~\ref{ex:probe}.}
\label{ex:probe-eps}
Since \(X=\mathbb{R}^{n+1}\) one has \(N_X\equiv\{0\}\), so
Remark~\ref{rem:eps-sums-frames}(e) reduces to the \(\varepsilon\)-chain
rule for \(\theta\circ F\):
\[
\mathcal{V}_\varepsilon f(\bar w,\bar b)
=
F'(\bar w,\bar b)^\top\mathcal{V}_\varepsilon\theta\bigl(F(\bar w,\bar b)\bigr),
\qquad
\mathcal{U}_\varepsilon f(\bar w,\bar b)
=
F'(\bar w,\bar b)^{-1}\bigl(\mathcal{U}_\varepsilon\theta(F(\bar w,\bar b))\bigr).
\]
The exact subdifferential of \(\theta\) is a product of singletons except
in the coordinates \(i\in I_0\). A proper approximation that fattens only
those kinks is
\[
I_\varepsilon
:=
\bigl\{i:\bigl|F_i(\bar w,\bar b)\bigr|\le\varepsilon\bigr\},
\qquad
\bar\partial_\varepsilon\theta\bigl(F(\bar w,\bar b)\bigr)
:=
C\prod_{i\in I_\varepsilon}[0,1]
\times
\prod_{i\notin I_\varepsilon}
\partial\sigma\bigl(F_i(\bar w,\bar b)\bigr).
\]
The second product consists of singletons, so
\(\mathcal{V}_\varepsilon\theta(F(\bar w,\bar b))=\spn\{e_i:i\in I_\varepsilon\}\)
and
\[
\mathcal{V}_\varepsilon f(\bar w,\bar b)
=
\spn
\bigl\{
\bigl(y_i\phi(x_i),\,y_i\bigr):i\in I_\varepsilon
\bigr\},
\]
\[
\mathcal{U}_\varepsilon f(\bar w,\bar b)
=
\bigl\{
(dw,db)
:
y_i\bigl(\langle dw,\phi(x_i)\rangle+db\bigr)=0
\text{ for all }i\in I_\varepsilon
\bigr\}.
\]
At \(\varepsilon=0\) one recovers \(I_0\) and the exact spaces of the first
paragraph of the example. The associated kernel matrix has one extra row
\(y_i(\phi(x_i)^\top,1)\) for each index that enters \(I_\varepsilon\setminus I_0\).

\section{\texorpdfstring{Partial smoothness and the \(\mathcal{U}\)-Hessian}{Partial smoothness and the U-Hessian}}
\label{sec:ps}

The \(\mathcal{U}\)-space of Section~\ref{sec:prelim} is identified
below with the tangent space of an active manifold.
Subsection~\ref{ss:PS} records the manifold and the derivatives of
\(f\) along it.
Subsection~\ref{ss:UH-ident} identifies \(H_U\) and \(H_\varepsilon\)
with the Gram matrix of the covariant Hessian at \(\bar{x}\).
Subsection~\ref{ss:UH-along} writes the same matrix along \(\mathcal{M}\)
and records its continuity.
Subsection~\ref{ss:UH-tilt} tests positive-definiteness of the matrix at
\(\bar{x}\).
The same test is computed on the soft-margin example of
Section~\ref{sec:calculus}.

\subsection{Partial smoothness and derivatives along the manifold}
\label{ss:PS}

This subsection records the active manifold and the derivatives of
\(f\) along it. The next subsection uses those derivatives to identify
\(H_U\) and \(H_\varepsilon\) at \(\bar{x}\).

A set \(\mathcal{M}\subset\mathbb{R}^n\) is a \(C^k\)-smooth
manifold of codimension \(m\) around \(\bar{x}\in\mathcal{M}\) if there is
an open set \(Q\subset\mathbb{R}^n\) such that
\[
\mathcal{M}\cap Q=\bigl\{x\in Q:\phi_i(x)=0,\ i=1,\ldots,m\bigr\},
\]
where each \(\phi_i\) is of class \(C^k\) and
\(\{\nabla\phi_i(\bar{x}):i=1,\ldots,m\}\) is linearly independent. Then
\[
N_{\mathcal{M}}(\bar{x})
=
\spn\bigl\{\nabla\phi_i(\bar{x}):i=1,\ldots,m\bigr\},
\qquad
T_{\mathcal{M}}(\bar{x})=N_{\mathcal{M}}(\bar{x})^\perp.
\]

\begin{definition}[partial smoothness]
\label{def:PS}
Let \(f:\mathbb{R}^n\to\overline{\mathbb{R}}\) be proper and lsc, let
\(k\in\mathbb{N}\) with \(k\ge 1\), and let \(\mathcal{M}\) be a
\(C^k\)-smooth manifold around \(\bar{x}\in\mathcal{M}\). Following
Lewis \cite{Lewis2002active} and Lewis--Zhang
\cite[Definition~3.2]{lewis2013partial}, we say that \(f\) is
\(C^k\)-\emph{partly smooth} at \(\bar{x}\) relative to \(\mathcal{M}\)
if the following four properties hold.
\begin{enumerate}
\item[(i)]
\emph{Restricted smoothness.}
The restriction \(f|_{\mathcal{M}}\) is of class \(C^k\) near
\(\bar{x}\): there exists a representative \(h:\mathbb{R}^n\to\mathbb{R}\)
of class \(C^k\) near \(\bar{x}\) with \(h|_{\mathcal{M}}=f|_{\mathcal{M}}\)
locally around \(\bar{x}\).
\item[(ii)]
\emph{Regularity.}
At every point of \(\mathcal{M}\) near \(\bar{x}\), the function \(f\) is
subdifferentially regular in the sense of
\cite{Rockafellar1998} and \(\partial f\neq\emptyset\).
\item[(iii)]
\emph{Normal sharpness.}
\(N_{\mathcal{M}}(\bar{x})=\mathcal{V}f(\bar{x})\).
\item[(iv)]
\emph{Subgradient continuity.}
The mapping \(x\mapsto\partial f(x)\) is continuous at \(\bar{x}\) relative
to \(\mathcal{M}\): if \(x_k\in\mathcal{M}\) and \(x_k\to\bar{x}\), then
\(\partial f(x_k)\to\partial f(\bar{x})\) in the Painlev\'e--Kuratowski
sense.
\end{enumerate}
Condition (iii) and Definition~\ref{def:vu} give
\(\mathcal{U}f(\bar{x})=T_{\mathcal{M}}(\bar{x})\).
\end{definition}

\begin{definition}[prox-regularity and subdifferential continuity]
\label{def:prox-reg}
Let \(f:\mathbb{R}^n\to\overline{\mathbb{R}}\) be proper and let
\(\bar{v}\in\partial f(\bar{x})\). Following
\cite{Poliquin1996prox}, we say that \(f\) is \emph{prox-regular} at
\(\bar{x}\) for \(\bar{v}\) if \(f\) is locally lsc at \(\bar{x}\) and there
exist \(\varepsilon>0\) and \(r>0\) such that the quadratic minorant
\[
f(x')
\ge
f(x)+\langle v,x'-x\rangle-\frac{r}{2}\|x'-x\|^2
\]
holds for every pair \((x,v)\) with \(v\in\partial f(x)\),
\(\|x-\bar{x}\|<\varepsilon\),
\(\bigl|f(x)-f(\bar{x})\bigr|<\varepsilon\),
\(\|v-\bar{v}\|<\varepsilon\), and every \(x'\) with
\(\|x'-\bar{x}\|<\varepsilon\).
The same constants \(\varepsilon,r\) serve for all such pairs: the
minorant is uniform in the subgradient \(v\in\partial f(x)\) near
\(\bar{v}\).
The function is \emph{subdifferentially continuous} at \(\bar{x}\) for
\(\bar{v}\) if \(x^k\to\bar{x}\) and \(v^k\to\bar{v}\) with
\(v^k\in\partial f(x^k)\) imply \(f(x^k)\to f(\bar{x})\).
Convex functions are prox-regular and subdifferentially continuous at
every point where \(\partial f\neq\emptyset\).
\end{definition}

\begin{definition}[covariant derivative and covariant Hessian]
\label{def:cov-Hess}
Let \(\mathcal{M}\subset\mathbb{R}^n\) be a manifold around \(\bar{x}\)
and let \(f:\mathcal{M}\to\mathbb{R}\) be defined near \(\bar{x}\).
Following Lewis--Zhang \cite[Definition~2.11]{lewis2013partial}, if
\(\mathcal{M}\) is \(C^1\) and \(f|_{\mathcal{M}}\) is of class \(C^1\)
near \(\bar{x}\), the \emph{covariant derivative}
\(\nabla_{\mathcal{M}}f(\bar{x})\in T_{\mathcal{M}}(\bar{x})\) is the unique
vector satisfying
\[
\bigl\langle\nabla_{\mathcal{M}}f(\bar{x}),u\bigr\rangle
=\frac{d}{dt}\Big|_{t=0}f\bigl(P_{\mathcal{M}}(\bar{x}+tu)\bigr),
\qquad
u\in T_{\mathcal{M}}(\bar{x}).
\]
If \(\mathcal{M}\) is \(C^2\) and \(f|_{\mathcal{M}}\) is of class \(C^2\)
near \(\bar{x}\), the nearest-point projection is of class \(C^2\) near
the origin on \(T_{\mathcal{M}}(\bar{x})\), and the \emph{covariant Hessian}
\(\nabla^2_{\mathcal{M}}f(\bar{x}):T_{\mathcal{M}}(\bar{x})\times T_{\mathcal{M}}(\bar{x})\to\mathbb{R}\)
is the unique self-adjoint bilinear form satisfying
\[
\bigl\langle\nabla^2_{\mathcal{M}}f(\bar{x})\,u,u\bigr\rangle
=\frac{d^2}{dt^2}\Big|_{t=0}f\bigl(P_{\mathcal{M}}(\bar{x}+tu)\bigr),
\qquad
u\in T_{\mathcal{M}}(\bar{x}).
\]
\end{definition}

\begin{remark}[geodesic and Riemannian formulae]
\label{rem:cov-Ugrad-riem}
The pairing in Definition~\ref{def:cov-Hess} agrees with the geodesic
and Riemannian-gradient formulae of
Miller--Malick \cite[Section~4]{Miller2005}.
\end{remark}

\begin{lemma}[covariant derivative and \(\mathcal{U}\)-gradient]
\label{lem:cov-Ugrad}
Assume \(f\) is \(C^1\)-partly smooth at \(\bar{x}\) relative to
\(\mathcal{M}\). Then
\[
\bar g_u
=
U_f^\top\nabla_{\mathcal{M}}f(\bar{x}),
\qquad
g_U(\bar{x})
=
\nabla_{\mathcal{M}}f(\bar{x}).
\]
\end{lemma}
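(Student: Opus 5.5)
The plan is to show that every subgradient \(g\in\partial f(\bar{x})\) projects onto \(T_{\mathcal{M}}(\bar{x})\) as \(\nabla_{\mathcal{M}}f(\bar{x})\). Normal sharpness, Definition~\ref{def:PS}(iii), gives \(\mathcal{U}f(\bar{x})=T_{\mathcal{M}}(\bar{x})\). Hence \(P_{\mathcal{U}f(\bar{x})}g=P_{T_{\mathcal{M}}(\bar{x})}g\). Since \(\nabla_{\mathcal{M}}f(\bar{x})\in T_{\mathcal{M}}(\bar{x})=\Ima U_f\), applying \(U_f^\top\) to \(g_U(\bar{x})=\nabla_{\mathcal{M}}f(\bar{x})\) yields \(\bar g_u=U_f^\top\nabla_{\mathcal{M}}f(\bar{x})\). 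So it suffices to prove \(\langle g,u\rangle=\langle\nabla_{\mathcal{M}}f(\bar{x}),u\rangle\) for every \(u\in T_{\mathcal{M}}(\bar{x})\) and one (hence every) \(g\in\partial f(\bar{x})\). Independence of \(g\) is already recorded in Definition~\ref{def:U-grad}.

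Fix \(u\in T_{\mathcal{M}}(\bar{x})\) and consider the \(C^1\) curve \(c(t)=P_{\mathcal{M}}(\bar{x}+tu)\), which satisfies \(c(0)=\bar{x}\) and \(c'(0)=u\). By restricted smoothness, \(f(c(t))=h(c(t))\) with \(h\) of class \(C^1\). Hence \(\varphi(t):=f(c(t))\) is differentiable at \(0\) with \(\varphi'(0)=\langle\nabla h(\bar{x}),u\rangle=\langle\nabla_{\mathcal{M}}f(\bar{x}),u\rangle\) by Definition~\ref{def:cov-Hess}.

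Next compare this with \(g\). Subdifferential regularity at \(\bar{x}\) (Definition~\ref{def:PS}(ii)) makes \(g\) a regular subgradient. Therefore
\[
f(x)\ge f(\bar{x})+\langle g,x-\bar{x}\rangle+o(\|x-\bar{x}\|).
\]
Evaluating along \(x=c(t)\) and using \(c(t)-\bar{x}=tu+o(t)\) gives \(\varphi(t)-\varphi(0)\ge t\langle g,u\rangle+o(t)\). Dividing by \(t>0\) and by \(t<0\) and letting \(t\to0\) yields \(\varphi'(0)\ge\langle g,u\rangle\) and \(\varphi'(0)\le\langle g,u\rangle\). Thus \(\langle g,u\rangle=\langle\nabla_{\mathcal{M}}f(\bar{x}),u\rangle\), which gives \(P_{T_{\mathcal{M}}}g=\nabla_{\mathcal{M}}f(\bar{x})\).

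The main obstacle is the two-sided derivative argument. The regular-subgradient inequality holds only in the inequality direction, so the equality relies on \(f\) being genuinely differentiable along the curve through \(\bar{x}\). I will need \(\partial f(\bar{x})=\hat\partial f(\bar{x})\), which is what regularity provides. I also need \(c\) to be \(C^1\) with \(c'(0)=u\), which follows from the \(C^1\) manifold structure, the local \(C^1\) smoothness of \(P_{\mathcal{M}}\), and \(P_{\mathcal{M}}'(\bar{x})=P_{T_{\mathcal{M}}(\bar{x})}\). As a cross-check, \(\nabla h(\bar{x})-g\) is orthogonal to \(T_{\mathcal{M}}\) and so lies in \(N_{\mathcal{M}}(\bar{x})=\mathcal{V}f(\bar{x})\), consistent with (iii). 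Condition (iv) is not needed for this statement.
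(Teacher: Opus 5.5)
Your proof is correct and is essentially the paper's argument: normal sharpness identifies $\mathcal{U}f(\bar x)$ with $T_{\mathcal M}(\bar x)$, and regularity yields a two-sided first-order comparison (the paper takes $d=\pm\tau$ in $f'(\bar x;d)\ge\langle g,d\rangle$, you take $t>0$ and $t<0$ in the regular-subgradient inequality along $c(t)=P_{\mathcal M}(\bar x+tu)$). Your curve-based version also makes rigorous the paper's compressed step $f'(\bar x;\tau)=\langle\nabla_{\mathcal M}f(\bar x),\tau\rangle$, which is not automatic along the straight line $\bar x+t\tau$ when $f$ is extended-valued.

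One small inaccuracy: a merely $C^1$ manifold does not make $P_{\mathcal M}$ single-valued or $C^1$. You only use $c(t)=\bar x+tu+o(t)$, though, and this holds for any choice of nearest point, since the distance from $\bar x+tu$ to $\mathcal M$ is $o(t)$ for $u\in T_{\mathcal M}(\bar x)$.
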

\begin{proof}
By Definition~\ref{def:PS}(iii) (normal sharpness),
\(\mathcal{U}f(\bar{x})=T_{\mathcal{M}}(\bar{x})\).
By Definition~\ref{def:PS}(i) (restricted smoothness),
\(f'(\bar{x};\tau)=\langle\nabla_{\mathcal{M}}f(\bar{x}),\tau\rangle\)
for all \(\tau\in T_{\mathcal{M}}(\bar{x})\).
Definition~\ref{def:PS}(ii) (regularity) gives
\(\partial f(\bar{x})\neq\emptyset\) and identifies the directional
derivative with the support function of \(\partial f(\bar{x})\),
so \(f'(\bar{x};d)\ge\langle g,d\rangle\) for every
\(g\in\partial f(\bar{x})\) and every direction \(d\).
Taking \(d=\pm\tau\) forces
\(\langle g-\nabla_{\mathcal{M}}f(\bar{x}),\tau\rangle=0\) on
\(T_{\mathcal{M}}(\bar{x})\), hence
\(P_{T_{\mathcal{M}}(\bar{x})}g=\nabla_{\mathcal{M}}f(\bar{x})\).
Definition~\ref{def:U-grad} gives
\(\bar g_u=U_f^\top g\), and \(U_f^\top P_{T_{\mathcal{M}}(\bar{x})}=U_f^\top\),
so \(\bar g_u=U_f^\top\nabla_{\mathcal{M}}f(\bar{x})\).
Definition~\ref{def:PS}(iv) is not used.
\end{proof}
When the composite models of Section~\ref{sec:calculus} are in addition
\(C^1\)-partly smooth, Lemma~\ref{lem:cov-Ugrad} reads the
\(\mathcal{U}\)-gradient formulae recorded after each rule
(the pairing \(\bar g_u(f)=U_f^\top F'(\bar{x})^\top s\) after
Theorem~\ref{thm:chain}, and the five analogues that follow)
as the coordinate form of the covariant chain rule
\(\nabla_{\mathcal{M}}f=P_{T_{\mathcal{M}}}F'(\bar{x})^\top\nabla_{\mathcal{M}_h}h\)
on the active manifolds of \(f\) and of \(h\). No new
first-order identity is involved.

\begin{definition}[Mordukhovich Hessian]
\label{def:MH}
Let \(f:\mathbb{R}^n\to\overline{\mathbb{R}}\) be proper and lsc, and let
\(\bar{v}\in\partial f(\bar{x})\).
The \emph{Mordukhovich Hessian} of \(f\) at \(\bar{x}\) for \(\bar{v}\)
is the set-valued mapping
\(\partial^2 f(\bar{x}\mid\bar{v}):\mathbb{R}^n\rightrightarrows\mathbb{R}^n\)
given by
\[
\partial^2 f(\bar{x}\mid\bar{v})(w)
=
\bigl\{
z\in\mathbb{R}^n
:
(z,-w)\in N_{\gph\partial f}(\bar{x},\bar{v})
\bigr\}.
\]
This is the coderivative of \(\partial f\) at \((\bar{x},\bar{v})\) in the
sense of \cite[Definition~8.33]{Rockafellar1998}:
\(\partial^2 f(\bar{x}\mid\bar{v})=D^*(\partial f)(\bar{x}\mid\bar{v})\).
\end{definition}

\subsection{Identification of \texorpdfstring{\(H_U\)}{HU} and \texorpdfstring{\(H_\varepsilon\)}{Heps}}
\label{ss:UH-ident}

The previous subsection gives the derivatives of \(f\) along
\(\mathcal{M}\). This subsection writes \(\mathcal{M}\) as a graph
over \(\mathcal{U}f(\bar{x})\) and identifies \(H_U\) and
\(H_\varepsilon\) at that point.

We recall \cite[Theorem~6.1]{Lewis2002active}, excluding the
equivalence with strong criticality recorded at the end of that theorem.
A local minimizer \(\bar{x}\) of a function \(h\) is \emph{sharp} if
\[
\liminf_{z\to 0}\frac{h(\bar{x}+z)-h(\bar{x})}{\|z\|}>0.
\]

\begin{lemma}[{Lewis graph representation \cite[Theorem~6.1]{Lewis2002active}}]
\label{lem:Lewis-graph}
Suppose the finite-valued function \(f:\mathbb{R}^n\to\mathbb{R}\) is partly smooth at
the point \(\bar{x}\) relative to the set \(\mathcal{M}\subset\mathbb{R}^n\).
Define subspaces \(\mathcal{U}=T_{\mathcal{M}}(\bar{x})\) and
\(\mathcal{V}=N_{\mathcal{M}}(\bar{x})\). Then there exists a function
\(v:\mathcal{U}\to\mathcal{V}\) with the following three properties.
\begin{enumerate}
\item[\rm (i)]
the function \(v\) is smooth near the origin;
\item[\rm (ii)]
for small vectors \(u\in\mathcal{U}\) and \(w\in\mathcal{V}\),
\(\bar{x}+u+w\in\mathcal{M}\) if and only if \(w=v(u)\);
\item[\rm (iii)]
\(v(u)=O(\|u\|^2)\) for small \(u\in\mathcal{U}\).
\end{enumerate}
Fix any vector \(y\in\ri\partial f(\bar{x})\). Then for any small vector
\(u\in\mathcal{U}\), the function
\begin{equation}
\label{eq:Lewis-slice}
w\in\mathcal{V}\mapsto f(\bar{x}+u+w)-\langle y,\bar{x}+u+w\rangle
\end{equation}
has a sharp minimizer at the point \(v(u)\).
\end{lemma}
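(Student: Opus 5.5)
The plan has three parts: a graph representation of \(\mathcal{M}\) over \(\mathcal{U}\) from the implicit function theorem, the bound \(v(u)=O(\|u\|^2)\), and the sharpness claim from normal sharpness together with the relative-interior hypothesis.

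\textbf{Graph representation.} Write \(\mathcal{M}\cap Q=\{x:\phi(x)=0\}\) with \(\phi=(\phi_1,\dots,\phi_m)\) of class \(C^k\) and \(\phi'(\bar{x})\) of full row rank. Define
\[
\Phi(u,w):=\phi(\bar{x}+u+w),\qquad (u,w)\in\mathcal{U}\times\mathcal{V}.
\]
The partial derivative \(\partial_w\Phi(0,0)=\phi'(\bar{x})|_{\mathcal{V}}\) is injective on \(\mathcal{V}=N_{\mathcal{M}}(\bar{x})=\Ima\phi'(\bar{x})^\top\), because \(\ker\phi'(\bar{x})=\mathcal{V}^\perp\). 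It is therefore an isomorphism \(\mathcal{V}\to\mathbb{R}^m\). The implicit function theorem then yields a \(C^k\) map \(v\) near the origin, giving (i), such that \(\Phi(u,w)=0\) if and only if \(w=v(u)\) for small \((u,w)\), giving (ii).

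\textbf{Quadratic order of \(v\).} Differentiating \(\Phi(u,v(u))=0\) at \(0\) gives \(\phi'(\bar{x})(u+v'(0)u)=0\). Since \(u\in\ker\phi'(\bar{x})\), this reduces to \(\phi'(\bar{x})v'(0)u=0\). Injectivity of \(\phi'(\bar{x})\) on \(\mathcal{V}\) then forces \(v'(0)=0\). Together with \(v(0)=0\) and \(k\ge2\), Taylor's theorem gives (iii). If only \(k=1\) is assumed, one gets \(o(\|u\|)\); I will note that the \(O(\|u\|^2)\) claim relies on \(C^2\) smoothness.

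\textbf{Sharp minimizer.} Set
\[
\psi_u(w):=f(\bar{x}+u+w)-\langle y,\bar{x}+u+w\rangle .
\]
At \(w=v(u)\) the point \(x_u:=\bar{x}+u+v(u)\) lies on \(\mathcal{M}\). By regularity, for any direction \(d\in\mathcal{V}\),
\[
\psi_u'(v(u);d)=\sigma_{\partial f(x_u)-y}(d).
\]
At \(u=0\), the hypothesis \(y\in\ri\partial f(\bar{x})\) and normal sharpness \(\aff\partial f(\bar{x})-y=\mathcal{V}\) give \(y+\eta(\mathbb{B}\cap\mathcal{V})\subset\partial f(\bar{x})\). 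Hence
\[
\sigma_{\partial f(\bar{x})-y}(d)\ge\eta\|d\|\qquad\text{for all } d\in\mathcal{V}.
\]

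\textbf{Main obstacle.} The difficulty is to make this bound uniform for small \(u\), and to pass from a directional-derivative bound to a genuine sharp-minimum inequality, which is nonlocal in \(w\). I would first use subgradient continuity (iv): inner semicontinuity of \(\partial f\) along \(\mathcal{M}\) lets me pick finitely many points \(y\pm\eta' e_j\), with \(e_j\) a basis of \(\mathcal{V}\), and approximate them by subgradients \(g_j^\pm(u)\in\partial f(x_u)\). This keeps
\[
\sigma_{\partial f(x_u)-y}(d)\ge(\eta/2)\|d\|\qquad\text{on }\mathcal{V}.
\]
For the sharpness inequality itself, the subgradient inequality is available from convexity for a convex \(f\). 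For a general finite \(f\), I would instead use the regular (Fréchet) subgradient inequality \(f(x_u+d)\ge f(x_u)+\langle g,d\rangle-o(\|d\|)\) with each fixed \(g_j^\pm(u)\). The \(o(\cdot)\) term is not uniform in \(u\), so obtaining a neighbourhood of \(0\) in \(w\) independent of \(u\) is the delicate point. Since the statement is quoted from \cite[Theorem~6.1]{Lewis2002active}, I would at that step defer to Lewis's argument, which uses the full partial-smoothness structure (\(\mathcal{M}\) as a \(C^2\) manifold and continuity of \(\partial f\)), rather than reprove it.
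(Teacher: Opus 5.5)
The paper gives no proof of this lemma; it simply quotes \cite[Theorem~6.1]{Lewis2002active}. Your implicit-function argument for (i)--(iii) is correct: \(\partial_w\Phi(0,0)\) is invertible on \(\mathcal{V}\), \(Dv(0)=0\), and (iii) uses \(C^2\), which is the smoothness in force wherever the paper invokes the lemma.

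The step you defer to Lewis is not delicate, and you already have what closes it. ``Sharp minimizer'' here means the pointwise condition \(\liminf_{d\to0}\bigl(\psi_u(v(u)+d)-\psi_u(v(u))\bigr)/\|d\|>0\) at \(v(u)\), separately for each small \(u\). The lemma asserts no neighbourhood in \(w\) that is uniform in \(u\), so nothing nonlocal is needed. Fix \(u\) and put \(x_u=\bar x+u+v(u)\in\mathcal{M}\). Let \(g_1(u),\dots,g_N(u)\in\partial f(x_u)\) be your approximating subgradients, so that \(\max_i\langle g_i(u)-y,d\rangle\ge c\|d\|\) on \(\mathcal{V}\) for some \(c>0\). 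Regularity at \(x_u\) (Definition~\ref{def:PS}(ii)) makes each \(g_i(u)\) a regular subgradient, that is, \(f(x_u+d)\ge f(x_u)+\langle g_i(u),d\rangle+o(\|d\|)\). With \(u\) fixed and \(N\) finite the remainders merge, and
\[
\psi_u(v(u)+d)-\psi_u(v(u))
=f(x_u+d)-f(x_u)-\langle y,d\rangle
\ge c\|d\|+o(\|d\|),\qquad d\in\mathcal{V},
\]
which is the claim.

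An equivalent route: \cite[Theorem~8.30]{Rockafellar1998} gives \(df(x_u)=\sigma_{\partial f(x_u)}\). The subderivative of the slice at \(v(u)\) then dominates \(\sigma_{\partial f(x_u)-y}\) on \(\mathcal{V}\), and the \(\liminf\) above equals its minimum over the unit sphere of \(\mathcal{V}\).

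Uniformity in \(u\) is a real issue, but only for the fixed-ball uniqueness step in the proof of Proposition~\ref{prop:UH-ident}(ii). This lemma does not supply it. Your uniform constant \(c\), combined with the uniform quadratic minorant that prox-regularity at \(\bar x\) for \(0\) provides, does.

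Two small corrections. First, your identity \(\psi_u'(v(u);d)=\sigma_{\partial f(x_u)-y}(d)\) should be an inequality for the subderivative. Without local Lipschitz continuity the one-sided directional derivative need not exist, and positivity of it direction by direction would not give sharpness anyway. Restricting to \(\mathcal{V}\) can only raise the subderivative, and the inequality is all you use. Second, the \(2\dim\mathcal{V}\) points \(y\pm\eta'e_j\), for an orthonormal basis, give \(\max\langle g-y,d\rangle\ge(\eta'/\sqrt{\dim\mathcal{V}}-\delta)\|d\|\), not \((\eta/2)\|d\|\). A finite net of the unit sphere of \(\mathcal{V}\) gives the better constant, but any \(c>0\) suffices.

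Compared with the bare citation, your argument uses only:
regularity at points of \(\mathcal{M}\);
normal sharpness and \(y\in\ri\partial f(\bar x)\);
inner semicontinuity of \(\partial f\) along \(\mathcal{M}\);
finiteness of \(f\) on \(\mathcal{M}\).
It therefore covers proper lsc \(f\). That matters because Lemma~\ref{lem:phi-Hess} applies this lemma to proper lsc \(f\), whereas the quoted statement is for finite-valued \(f\).
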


\begin{lemma}[reduction along the graph]
\label{lem:phi-Hess}
Let \(f:\mathbb{R}^n\to\overline{\mathbb{R}}\) be proper and lsc, and
\(C^2\)-partly smooth at a point \(x\) relative to a \(C^2\)-smooth manifold
\(\mathcal{M}\).
Let \(U\) be an orthonormal frame of \(\mathcal{U}f(x)=T_{\mathcal{M}}(x)\)
and \(V\) a basis matrix of \(\mathcal{V}f(x)=N_{\mathcal{M}}(x)\).
Let \(x(u)=x+Uu+Vv(u)\) be the coordinate graph furnished by
Lemma~\ref{lem:Lewis-graph} at \(x\), and set \(\varphi(u):=f(x(u))\).
\begin{enumerate}
\item[\rm (i)]
\(\varphi\) is of class \(C^2\) near the origin.
\item[\rm (ii)]
If \(y\in\ri\partial f(x)\), then for every small \(u\) the slice
\(w\mapsto f(x+Uu+Vw)-\langle y,x+Uu+Vw\rangle\) has a sharp
minimizer at \(w=v(u)\).
If \(y=g_U(x)\), the pairing vanishes on \(\mathcal{V}f(x)\) and the
slice \(w\mapsto f(x+Uu+Vw)\) has a sharp minimizer at
\(w=v(u)\).
\item[\rm (iii)]
\[
\nabla^2\varphi(0)
=
U^\top\nabla^2_{\mathcal{M}}f(x)\,U.
\]
\end{enumerate}
In particular, if \(x=\bar{x}\) and \(0\in\ri\partial f(\bar{x})\), then
\(g_U(\bar{x})=0\), the frames may be taken to be \(U_f\) and \(V_f\),
and (ii) is the slice \(w\mapsto f(\bar{x}+U_fu+V_fw)\).
\end{lemma}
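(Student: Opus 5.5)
We prove the three items in order, working throughout in the coordinates \(x(u)=x+Uu+Vv(u)\) furnished by Lemma~\ref{lem:Lewis-graph}. Since \(\mathcal{M}\) is \(C^2\), the implicit-function construction behind Lemma~\ref{lem:Lewis-graph}(i) makes \(v\) of class \(C^2\) near the origin, and Lemma~\ref{lem:Lewis-graph}(iii) gives \(v(0)=0\) and \(v'(0)=0\). Lemma~\ref{lem:Lewis-graph} is stated for finite-valued \(f\). For a proper lsc \(f\) the graph parametrisation (i)--(iii) uses only the manifold, and the sharpness conclusion will be handled separately in step (ii) below.

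\emph{Step (i).} Let \(\tilde f\) be the \(C^2\) representative of Definition~\ref{def:PS}(i). For small \(u\) the point \(x(u)\) lies in \(\mathcal{M}\), so \(\varphi(u)=\tilde f(x(u))\). This is a composition of \(C^2\) maps, hence \(C^2\).

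\emph{Step (ii).} This is the main obstacle. For finite-valued \(f\) it is exactly the last assertion of Lemma~\ref{lem:Lewis-graph}, read in the coordinates \(w\mapsto Vw\); the linear change of variables preserves sharpness because \(V\) is injective. For proper lsc \(f\) we re-run Lewis's argument using only properties (ii)--(iv) of Definition~\ref{def:PS}, in three moves.
\begin{itemize}
\item[(a)] Use subgradient continuity (iv) together with \(y\in\ri\partial f(x)\) to obtain: for \(x'\in\mathcal{M}\) near \(x\), the set \(\partial f(x')\) contains a \(\mathcal{V}\)-ball of fixed radius \(\eta/2\) around a point \(y'\) near \(y\). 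Here we use normal sharpness at nearby points, which follows from (iii), (iv) and the continuity of \(N_{\mathcal{M}}\).
\item[(b)] By regularity (ii), the directional derivative of the slice at \(v(u)\) in a direction \(d\in\mathcal{V}\) is at least \(\sigma_{\partial f(x(u))}(Vd)-\langle y,Vd\rangle\). By (a) this is at least \(c\|d\|\) for some \(c>0\), after absorbing the small discrepancy between \(N_{\mathcal{M}}(x(u))\) and \(\mathcal{V}\).
\item[(c)] Convert this uniform first-order growth into local sharp minimality, uniformly in \(u\), using the lower semicontinuity of \(f\).
\end{itemize}
If move (c) proves delicate, we restrict attention to the case needed later (\(y=0\), together with prox-regularity) and use the quadratic minorant of Definition~\ref{def:prox-reg} in place of convexity to pass from directional growth to genuine growth. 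For \(y=g_U(x)=P_{\mathcal{U}}g\), we have \(\langle y,Vw\rangle=0\), so the pairing is constant in \(w\). Hence the second sentence of (ii) is the first one with this \(y\), once we check that \(g_U(x)\in\ri\partial f(x)\). That holds whenever \(\partial f(x)\) meets \(\mathcal{U}\) in a relative-interior point, in particular in the case \(0\in\ri\partial f(x)\) of the final clause.

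\emph{Step (iii).} Fix \(u\) and put \(\tau=Uu\in T_{\mathcal{M}}(x)\). Compare the curve \(t\mapsto x(tu)=x+t\tau+Vv(tu)\) with \(t\mapsto P_{\mathcal{M}}(x+t\tau)\). Both are \(C^2\) curves in \(\mathcal{M}\) with the same value and velocity \(\tau\) at \(t=0\). Their accelerations at \(t=0\) are normal vectors, since the tangential part vanishes for the projection by a standard computation and \(Vv''\) is normal. Both accelerations equal the second fundamental form \(\mathrm{II}(\tau,\tau)\), so the curves agree to second order. Therefore
\[
\tfrac{d^2}{dt^2}\big|_{0}\tilde f(x(tu))=\tfrac{d^2}{dt^2}\big|_{0}\tilde f\bigl(P_{\mathcal{M}}(x+t\tau)\bigr)=\langle\nabla^2_{\mathcal{M}}f(x)\tau,\tau\rangle .
\]
The left side is \(u^\top\nabla^2\varphi(0)u\). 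Polarisation then gives \(\nabla^2\varphi(0)=U^\top\nabla^2_{\mathcal{M}}f(x)U\).

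\emph{Final clause.} If \(0\in\ri\partial f(\bar{x})\), then \(g_U(\bar{x})=P_{\mathcal{U}}0=0\), since \(g_U\) is independent of the chosen subgradient. We then take \(U=U_f\) and \(V=V_f\) and read off (ii) with \(y=0\).
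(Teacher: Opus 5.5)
Your proof is correct. For (i), (iii) and the final clause it is the paper's proof. For (ii) the paper only invokes Lemma~\ref{lem:Lewis-graph} and notes that \(\langle g_U(x),Vw\rangle=0\). Your proposal departs from it only by being more careful, in three places worth keeping.

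First, the proviso you attach to the second sentence of (ii) is necessary: without \(g_U(x)\in\ri\partial f(x)\) that sentence is false. Take \(f(t)=|t|+2t\), \(x=0\) and \(\mathcal{M}=\{0\}\). Every hypothesis holds, \(\partial f(0)=[1,3]\), \(\mathcal{U}f(0)=\{0\}\) and \(g_U(0)=0\). Yet the slice \(w\mapsto|w|+2w\) is not even locally minimized at \(w=0\). The paper's proof passes over this. The membership is established only later, in Lemma~\ref{lem:gv-gU}, before Proposition~\ref{prop:UH-ident-along} uses the clause.

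Second, Lemma~\ref{lem:Lewis-graph} is recalled only for finite-valued \(f\), and the paper applies it to a proper lsc \(f\) without comment. Your moves (a)--(c) are the right repair, and the fallback to prox-regularity is not needed. In (b), work with the subderivative \(df(x(u))\) rather than the one-sided directional derivative. By regularity and \cite[Theorem~8.30]{Rockafellar1998} it equals \(\sigma_{\partial f(x(u))}\), so the subderivative of the slice at \(v(u)\) is at least \(c\|d\|\) in every direction \(d\). A subderivative is a \(\liminf\) over \(t\downarrow 0\) and \(d'\to d\), so compactness of the unit sphere turns this bound into a sharp minimizer at \(v(u)\) for each fixed small \(u\). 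That is all the lemma claims; no uniformity in \(u\) is required in (c).

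Move (a) also simplifies. Let \((e_j)\) be an orthonormal basis of \(\mathcal{V}f(x)\). Inner semicontinuity of \(\partial f|_{\mathcal{M}}\) at the finitely many points \(y\pm\frac{\eta}{2}e_j\) already gives \(\sigma_{\partial f(x')}(z)-\langle y,z\rangle\ge c\|z\|\) for all \(z\in\mathcal{V}f(x)\) and all \(x'\in\mathcal{M}\) near \(x\). This avoids normal sharpness at nearby points and the ``absorbed discrepancy'' between \(N_{\mathcal{M}}(x(u))\) and \(\mathcal{V}f(x)\).

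Third, in (iii) your observation that both accelerations are normal, and hence both equal the second fundamental form, is what actually justifies the common 2-jet. The paper infers the common 2-jet from the shared expansion \(x+tUu+O(t^2)\), which by itself matches only first-order jets.
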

\begin{proof}
Definition~\ref{def:PS}(iii) and Definition~\ref{def:vu} give
\(\mathcal{V}f(x)=N_{\mathcal{M}}(x)\) and
\(\mathcal{U}f(x)=T_{\mathcal{M}}(x)\).
Lemma~\ref{lem:Lewis-graph} at \(x\) yields
\(v_{\mathcal{M}}:\mathcal{U}f(x)\to\mathcal{V}f(x)\)
of class \(C^2\) near the origin with \(v_{\mathcal{M}}(p)=O(\|p\|^2)\).
The coordinate map defined by \(Vv(u)=v_{\mathcal{M}}(Uu)\) then
satisfies \(v(0)=0\), \(Dv(0)=0\) and \(v(u)=O(\|u\|^2)\), and
\(x(u)=x+Uu+Vv(u)\) traces \(\mathcal{M}\) near \(x\).
Restricted smoothness of \(f|_{\mathcal{M}}\) and smoothness of \(v\)
give (i).
Lemma~\ref{lem:Lewis-graph} with the given \(y\in\ri\partial f(x)\)
gives the paired slice in (ii).
If \(y=g_U(x)\), Definition~\ref{def:U-grad} places \(y\) in
\(\mathcal{U}f(x)\), so the pairing vanishes on \(\mathcal{V}f(x)\).
If \(x=\bar{x}\) and \(0\in\ri\partial f(\bar{x})\), then
\(g_U(\bar{x})=0\) and the slice without the pairing is the case \(y=0\).

For (iii), fix \(u\in\mathbb{R}^{m}\) with \(m=\dim\mathcal{U}f(x)\)
and set \(\gamma(t):=x(tu)\).
Then \(\gamma(t)\in\mathcal{M}\), \(\gamma(0)=x\), and
\(Dv(0)=0\) yields \(\gamma'(0)=Uu\). Since \(v(tu)=O(t^2)\), \(\gamma(t)=x+tUu+O(t^2)\).
The nearest-point curve
\(\pi(t)=P_{\mathcal{M}}(x+tUu)\) has the same expansion, so
\(\gamma\) and \(\pi\) have the same 2-jet at \(t=0\).
By Definition~\ref{def:cov-Hess},
\[
\frac{d}{dt}\Big|_{t=0}f\bigl(\pi(t)\bigr)
=
\bigl\langle\nabla_{\mathcal{M}}f(x),\pi'(0)\bigr\rangle
=
\bigl\langle\nabla_{\mathcal{M}}f(x),Uu\bigr\rangle.
\]
Definition~\ref{def:cov-Hess} places
\(\nabla_{\mathcal{M}}f(x)\) in \(T_{\mathcal{M}}(x)\),
while \(\pi''(0)\) lies in \(N_{\mathcal{M}}(x)\) because
\(\pi\) is the projection of a straight line in the tangent space.
The pairing \(\langle\nabla_{\mathcal{M}}f(x),\pi''(0)\rangle\)
therefore vanishes by orthogonality.
Differentiating once more,
\[
\frac{d^2}{dt^2}\Big|_{t=0}f\bigl(\pi(t)\bigr)
=
\bigl\langle\nabla^2_{\mathcal{M}}f(x)\,Uu,\,Uu\bigr\rangle
+
\bigl\langle\nabla_{\mathcal{M}}f(x),\pi''(0)\bigr\rangle
=
\bigl\langle\nabla^2_{\mathcal{M}}f(x)\,Uu,\,Uu\bigr\rangle.
\]
The common 2-jet gives the same second derivative along \(\gamma\).
From \(\varphi(tu)=f(\gamma(t))\) we obtain
\[
\langle\nabla^2\varphi(0)\,u,u\rangle
=
\bigl\langle\nabla^2_{\mathcal{M}}f(x)\,Uu,\,Uu\bigr\rangle.
\]
Polarization of this identity of symmetric quadratic forms yields
\(\nabla^2\varphi(0)=U^\top\nabla^2_{\mathcal{M}}f(x)\,U\).
\end{proof}

\begin{proposition}[identification of the \(\mathcal{U}\)-Hessian]
\label{prop:UH-ident}
Let \(f:\mathbb{R}^n\to\overline{\mathbb{R}}\) be proper and lsc, and
\(C^2\)-partly smooth at \(\bar{x}\) relative to a \(C^2\)-smooth
manifold \(\mathcal{M}\). Assume \(0\in\ri\partial f(\bar{x})\).
Write
\[
H
\;:=\;
U_f^\top\nabla^2_{\mathcal{M}}f(\bar{x})\,U_f.
\]
\begin{enumerate}
\item[\rm (i)]
If \(f\in\Gamma_0(\mathbb{R}^n)\), then \(H_U=H\).
\item[\rm (ii)]
If \(f\) is prox-regular at \(\bar{x}\) for \(0\), then there exists
\(\varepsilon_0>0\) such that \(H_\varepsilon=H\) for every
\(\varepsilon\in(0,\varepsilon_0]\).
\end{enumerate}
In the convex case both identifications hold, and
\(H_U=H_\varepsilon\) for every such \(\varepsilon\).
\end{proposition}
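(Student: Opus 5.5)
The plan is to show that, near the origin, the \(\mathcal{U}\)-Lagrangian (respectively the local \(\mathcal{U}\)-Lagrangian) coincides with the function \(\varphi(u)=f(\bar{x}+U_fu+V_fv(u))\) of Lemma~\ref{lem:phi-Hess}. Once this holds, Lemma~\ref{lem:phi-Hess}(i) and (iii) give that the Hessian at the origin exists and equals \(U_f^\top\nabla^2_{\mathcal{M}}f(\bar{x})U_f=H\). In both parts the key observation is that \(0\in\ri\partial f(\bar{x})\) gives \(g_U(\bar{x})=0\). Then Lemma~\ref{lem:phi-Hess}(ii), with \(y=0\), says that for small \(u\) the unpaired slice \(w\mapsto f(\bar{x}+U_fu+V_fw)\) has a sharp minimizer at \(w=v(u)\), and \(v(u)=O(\|u\|^2)\).

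For (ii), I would fix the prox-regularity constants and aim for a uniform statement. There should exist \(\varepsilon_0>0\) and \(\delta>0\) such that for \(\|u\|<\delta\) the point \(v(u)\) is the minimizer of the slice over the whole ball \(\|w\|\le\varepsilon_0\). Sharpness alone is only local, with a radius that may depend on \(u\). This is the main obstacle.

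I would try to get uniformity as follows. Take \(\bar y=0\in\ri\partial f(\bar{x})\) and let \(\eta\) be as in the definition of \(\ri\). By subgradient continuity (Definition~\ref{def:PS}(iv)), there exist subgradients \(g(u)\in\partial f(x(u))\) with \(g(u)\to0\) whose \(\mathcal{V}\)-components can be moved by \(\eta/2\) in every direction of \(\mathcal{V}f(\bar{x})\). The underlying fact is that \(\partial f(x(u))\) contains a ball of radius \(\eta/2\) in a subspace close to \(\mathcal{V}\); this is the standard step in Lewis's proof.

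Applying the prox-regular quadratic minorant at \((x(u),g(u)+\nu)\) for all such \(\nu\) then gives
\[
f(\bar{x}+U_fu+V_fw)\ge f(x(u))+\tfrac{\eta}{2}c\|w-v(u)\|-\langle g(u),\cdot\rangle-\tfrac r2\|\cdot\|^2 .
\]
For \(\|w\|\le\varepsilon_0\) small, the linear term dominates. The error from the \(U\)-direction is \(O(\|u\|^2)\) or vanishes, since the displacement \(V_f(w-v(u))\) is in \(\mathcal{V}\). Hence \(v(u)\) is the minimizer over the ball, uniformly in \(u\).

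If the uniform ball in \(\partial f(x(u))\) proves awkward, the fallback is a contradiction argument. Suppose \(u_k\to0\) and \(w_k\) with \(\|w_k\|\le\varepsilon\) and \(f<\varphi(u_k)\). Pass to limits using lower semicontinuity and the sharp minimizer at \(u=0\), together with prox-regularity, to get a contradiction for \(\varepsilon\) below the sharpness radius at \(u=0\).

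With this in hand, \(\mathcal{L}_\varepsilon(u)=\varphi(u)\) for \(\|u\|<\delta\) and every \(\varepsilon\in(0,\varepsilon_0]\). The lower bound \(\varepsilon>\|v(u)\|\) holds for small \(u\) because \(v(u)=O(\|u\|^2)\). Hence \(H_\varepsilon=\nabla^2\varphi(0)=H\).

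For (i), convexity is simpler. A local minimizer of the convex slice is global, so \(W(u;0)=\{v(u)\}\), and in particular \(\mathcal{L}_U(u)=\varphi(u)\) for small \(u\). Then \(H_U=H\) by Lemma~\ref{lem:phi-Hess}. Alternatively, convex functions are prox-regular (Definition~\ref{def:prox-reg}), so (ii) applies, and Remark~\ref{rem:LU-Leps} gives \(\mathcal{L}_U=\mathcal{L}_\varepsilon\) near \(0\). Either route yields the final assertion \(H_U=H_\varepsilon=H\) for every \(\varepsilon\in(0,\varepsilon_0]\).
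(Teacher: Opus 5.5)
Your proposal is correct. It follows the paper's skeleton: show that the (local) \(\mathcal{U}\)-Lagrangian coincides with \(\varphi\) near the origin, then read off the Hessian from Lemma~\ref{lem:phi-Hess}(i),(iii). Your part (i) is exactly the paper's argument: a sharp, hence strict, local minimizer of the convex slice is its unique global minimizer, so \(\mathcal{L}_U=\varphi\) near \(0\).

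The routes differ in (ii). The paper states in one line that sharpness of the slice at \(u=0\), together with \(v(u)=O(\|u\|^2)\), yields a single ball \(\|w\|\le\varepsilon_0\) on which \(v(u)\) is the unique minimizer for all small \(u\). It uses prox-regularity only to make \(\mathcal{L}_{\varepsilon_0}\) finite. You instead prove that uniformity:
\begin{itemize}
\item Inner semicontinuity of \(\partial f\) along \(\mathcal{M}\) (Definition~\ref{def:PS}(iv)) and convexity of \(\partial f(x(u))\) (Definition~\ref{def:PS}(ii)) give a relative ball of subgradients around some \(g(u)\to0\). Its radius \(\eta'\) is bounded below, and it lies in a subspace converging to \(\mathcal{V}f(\bar{x})\).
\item Inserting all of these subgradients into the uniform prox-regular minorant gives \(f(\bar{x}+U_fu+V_fw)\ge\varphi(u)+(c\eta'-\|g(u)\|)\|d\|-\frac{r}{2}\|d\|^2\), where \(d=V_f(w-v(u))\) and \(c,\eta',r\) do not depend on \(u\).
\item This makes \(v(u)\) the unique minimizer on a fixed ball.
\end{itemize}

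Your route is longer, but it justifies the step the paper only asserts. Lower semicontinuity and sharpness at \(u=0\) only force a competing point \(w_k\) to tend to \(0\); they do not exclude it. The reason is that the sharpness radius furnished by Lemma~\ref{lem:phi-Hess}(ii) at \(u\neq0\) is not controlled. For the same reason your fallback contradiction argument is not an independent route: it needs the same subgradient ball to close.

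One detail to adjust. The prox-regular inequality applies only to subgradients within the prox-regularity radius \(\varepsilon_{\mathrm{pr}}\) of \(0\). So the radius of the \(\nu\)-ball should be \(\eta'=\min\{\eta/2,\varepsilon_{\mathrm{pr}}/2\}\) rather than \(\eta/2\). The function-value condition in Definition~\ref{def:prox-reg} is met because \(f(x(u))=\varphi(u)\to f(\bar{x})\).
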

\begin{proof}
Applied at \(\bar{x}\) with frames \(U_f\) and \(V_f\),
Lemma~\ref{lem:phi-Hess}(i) gives \(\varphi\in C^2\) near the origin,
Lemma~\ref{lem:phi-Hess}(iii) gives \(\nabla^2\varphi(0)=H\),
and Lemma~\ref{lem:phi-Hess}(ii) with \(0\in\ri\partial f(\bar{x})\)
says that \(w=v(u)\) is a sharp minimizer of the
slice \(w\mapsto f(\bar{x}+U_fu+V_fw)\).

For (i), the slice is convex in \(w\), so the sharp minimizer is the
unique global minimizer. Convexity and \(0\in\partial f(\bar{x})\) give
\(f\ge f(\bar{x})\), hence \(\mathcal{L}_U=\varphi\) near the origin
and \(H_U=\nabla^2\varphi(0)=H\).

For (ii), sharpness of the slice at \(u=0\) together with
\(v(u)=O(\|u\|^2)\) yields \(\varepsilon_0>0\) and a neighbourhood
\(\mathcal{O}\) of the origin such that, for every \(u\in\mathcal{O}\),
the point \(w=v(u)\) is the unique minimizer of the slice on the ball
\(\|w\|\le\varepsilon_0\).
Prox-regularity at \(\bar{x}\) for \(0\) supplies a quadratic minorant
\(f(x)\ge f(\bar{x})-\frac{\rho}{2}\|x-\bar{x}\|^2\) near \(\bar{x}\).
Shrinking \(\varepsilon_0\) if needed, for small \(u\) and
\(\|w\|\le\varepsilon_0\) the point
\(\bar{x}+U_fu+V_fw\) stays in that neighbourhood, so the slice is
bounded from below on the ball and \(\mathcal{L}_{\varepsilon_0}(u)\) is
finite. Combined with uniqueness of the minimizer,
\(\mathcal{L}_{\varepsilon_0}=\varphi\) on \(\mathcal{O}\).
The same holds with \(\varepsilon_0\) replaced by any
\(\varepsilon\in(0,\varepsilon_0]\), and
\(H_\varepsilon=\nabla^2\varphi(0)=H\).

If \(f\) is convex then it is prox-regular at \(\bar{x}\) for \(0\), so
both items apply and \(H_U=H_\varepsilon=H\).
\end{proof}

\subsection{Identification and continuity along \texorpdfstring{\(\mathcal{M}\)}{M}}
\label{ss:UH-along}

Subsection~\ref{ss:UH-ident} identifies \(H_U\) and \(H_\varepsilon\)
at the single point \(\bar{x}\). The same Gram matrix can be written at
nearby points of \(\mathcal{M}\) once the spaces, a frame, and the
reference subgradient move with the base point. The statements below
record those ingredients in the order of the items of
Definition~\ref{def:PS} they use, then identify the matrix along
\(\mathcal{M}\) and record its continuity.
For \(x\) near \(\bar{x}\) in \(\mathcal{M}\) write
\(\mathcal{U}_x:=T_{\mathcal{M}}(x)=\mathcal{U}f(x)\) and
\(\mathcal{V}_x:=N_{\mathcal{M}}(x)=\mathcal{V}f(x)\).
Let \(U(x)\) be an orthonormal frame of \(\mathcal{U}_x\) and
\(V(x)\) a basis matrix of \(\mathcal{V}_x\), and set
\(m=\dim\mathcal{U}f(\bar{x})\).

\begin{lemma}[spaces, frames, and the graph along \(\mathcal{M}\)]
\label{lem:along-setup}
Let \(\mathcal{M}\) be a \(C^1\)-smooth manifold around
\(\bar{x}\in\mathcal{M}\).
There exists a relative neighbourhood of \(\bar{x}\) in
\(\mathcal{M}\) on which \(P_{T_{\mathcal{M}}(x)}\) is continuous,
and of class \(C^1\) if \(\mathcal{M}\) is \(C^2\).

Now assume \(f:\mathbb{R}^n\to\overline{\mathbb{R}}\) is \(C^1\)-partly
smooth relative to \(\mathcal{M}\) throughout a neighbourhood of
\(\bar{x}\) in \(\mathcal{M}\). Then, on a relative neighbourhood:
\begin{enumerate}
\item[\rm (i)]
\(\mathcal{V}f(x)=N_{\mathcal{M}}(x)\),
\(\mathcal{U}f(x)=T_{\mathcal{M}}(x)\), and
\(\dim\mathcal{U}f(x)=m\) is constant.
If \(\mathcal{M}\) is \(C^2\), the frame \(U(x)\) may be chosen
continuously in \(x\).
\item[\rm (ii)]
\(g_U(x)=\nabla_{\mathcal{M}}f(x)\), and this map is continuous
along \(\mathcal{M}\).
\item[\rm (iii)]
If in addition \(f\) is \(C^2\)-partly smooth relative to a
\(C^2\)-smooth \(\mathcal{M}\) throughout that neighbourhood, then
Lemma~\ref{lem:phi-Hess}(i) and (iii) hold at each such \(x\), and
\(x\mapsto\nabla^2_{\mathcal{M}}f(x)\) is continuous along
\(\mathcal{M}\).
\end{enumerate}
\end{lemma}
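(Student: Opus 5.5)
The plan is to treat the manifold facts first, using local defining functions, and then read off (i)--(iii) from Definition~\ref{def:PS} applied at each nearby base point. Write \(\mathcal{M}\cap Q=\{\phi=0\}\) with \(\phi=(\phi_1,\dots,\phi_{n-m})\) of class \(C^1\) and \(J(x):=\phi'(x)\) of full row rank at \(\bar{x}\). By continuity of \(J\) and lower semicontinuity of rank, \(J(x)\) keeps full row rank on a neighbourhood. There
\[
P_{N_{\mathcal{M}}(x)}=J(x)^\top\bigl(J(x)J(x)^\top\bigr)^{-1}J(x),
\qquad
P_{T_{\mathcal{M}}(x)}=I-P_{N_{\mathcal{M}}(x)} .
\]
The inverse is a rational, hence smooth, function of the entries of \(J(x)\). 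So \(P_{T_{\mathcal{M}}(x)}\) is continuous, and it is \(C^1\) when \(\phi\), and hence \(\mathcal{M}\), is \(C^2\).

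For (i), partial smoothness holds at every \(x\) in the relative neighbourhood. Normal sharpness (Definition~\ref{def:PS}(iii)) at \(x\), together with Definition~\ref{def:vu}, gives \(\mathcal{V}f(x)=N_{\mathcal{M}}(x)\) and \(\mathcal{U}f(x)=T_{\mathcal{M}}(x)\). The dimension of the tangent space is \(n-\operatorname{rank}J(x)=m\), so it is constant.

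For a continuous frame, fix an orthonormal frame \(U_f\) of \(T_{\mathcal{M}}(\bar{x})\) and set \(B(x):=P_{T_{\mathcal{M}}(x)}U_f\). This matrix is continuous and has rank \(m\) near \(\bar{x}\), since \(B(\bar{x})=U_f\). Gram--Schmidt, or equivalently
\[
U(x):=B(x)\bigl(B(x)^\top B(x)\bigr)^{-1/2},
\]
then gives an orthonormal frame that depends continuously on \(x\). Actually this only needs \(C^1\) of \(\mathcal{M}\), so the \(C^2\) hypothesis is more than enough. For the \(V\)-side one can take \(V(x)=J(x)^\top\).

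For (ii), apply Lemma~\ref{lem:cov-Ugrad} at each \(x\) of the neighbourhood. This is legitimate because \(C^1\)-partial smoothness holds there, and it gives \(g_U(x)=\nabla_{\mathcal{M}}f(x)\).

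Continuity is the main step. Take the \(C^1\) representative \(h\) from Definition~\ref{def:PS}(i), with \(h=f\) on \(\mathcal{M}\) near \(\bar{x}\). Differentiating \(f(P_{\mathcal{M}}(x+tu))=h(P_{\mathcal{M}}(x+tu))\) shows
\[
\nabla_{\mathcal{M}}f(x)=P_{T_{\mathcal{M}}(x)}\nabla h(x).
\]
Here we use that the derivative of \(t\mapsto P_{\mathcal{M}}(x+tu)\) at \(0\) is \(u\) for \(u\in T_{\mathcal{M}}(x)\). If one wants to avoid nearest-point projections for a merely \(C^1\) manifold, the chain rule along any \(C^1\) curve in \(\mathcal{M}\) with velocity \(u\) gives the same identity. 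With this formula, continuity follows from continuity of \(\nabla h\) and of the projector.

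For (iii), Lemma~\ref{lem:phi-Hess}(i),(iii) holds at each nearby \(x\) because its hypotheses are pointwise and hold there. For continuity of \(\nabla^2_{\mathcal{M}}f(x)\), take a \(C^2\) representative \(h\) and use the Lewis--Zhang formula
\[
\nabla^2_{\mathcal{M}}f(x)=P_{T(x)}\Bigl(\nabla^2h(x)+\sum_i\lambda_i(x)\nabla^2\phi_i(x)\Bigr)P_{T(x)},
\]
with the multipliers defined by \(\lambda(x)=-\bigl(J(x)J(x)^\top\bigr)^{-1}J(x)\nabla h(x)\). This requires \(\phi\) of class \(C^2\).

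To verify the formula, restrict to a \(C^2\) curve \(\gamma\) in \(\mathcal{M}\) with \(\gamma(0)=x\) and \(\gamma'(0)=u\). Then
\[
\frac{d^2}{dt^2}h(\gamma)=\langle\nabla^2h\,u,u\rangle+\langle\nabla h,\gamma''(0)\rangle .
\]
Differentiating \(\phi_i(\gamma)=0\) twice gives \(\langle\nabla\phi_i,\gamma''\rangle=-\langle\nabla^2\phi_i\,u,u\rangle\). Since \(P_N\nabla h=-J^\top\lambda\), this yields \(\langle\nabla h,\gamma''\rangle=\sum_i\lambda_i\langle\nabla^2\phi_i\,u,u\rangle\). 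The formula follows, and the curve \(P_{\mathcal{M}}(x+tu)\) is such a \(C^2\) curve.

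Every term in the formula is continuous in \(x\), so \(\nabla^2_{\mathcal{M}}f\) is continuous. Composed with the continuous frame \(U(x)\) from (i), this also gives continuity of the Gram matrices \(U(x)^\top\nabla^2_{\mathcal{M}}f(x)U(x)\).

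The expected obstacle is making the representative \(h\) uniform over the neighbourhood. Definition~\ref{def:PS}(i) at \(\bar{x}\) supplies one \(h\) valid on a whole neighbourhood of \(\bar{x}\) in \(\mathcal{M}\), and I will use that single \(h\) throughout rather than pointwise representatives.
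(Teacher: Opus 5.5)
Your outline agrees with the paper on the projector formula, on (i), and on the pointwise identifications \(g_U(x)=\nabla_{\mathcal{M}}f(x)\) and Lemma~\ref{lem:phi-Hess}(i),(iii) at nearby \(x\). The step that fails as written is your verification of the Lagrangian formula in (iii).

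From \(\langle\nabla\phi_i(x),\gamma''(0)\rangle=-\langle\nabla^2\phi_i(x)u,u\rangle\) and \(P_{N_{\mathcal{M}}(x)}\nabla h(x)=-J(x)^\top\lambda(x)\) you only obtain
\[
\langle\nabla h(x),\gamma''(0)\rangle=\sum_i\lambda_i(x)\langle\nabla^2\phi_i(x)u,u\rangle+\bigl\langle P_{T_{\mathcal{M}}(x)}\nabla h(x),\gamma''(0)\bigr\rangle .
\]
The last term does not vanish for a general \(C^2\) curve in \(\mathcal{M}\) with velocity \(u\). Take the reparametrisation \(\tilde\gamma(t)=\gamma(t+\alpha t^2)\). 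It keeps \(\tilde\gamma'(0)=u\) but shifts \(\tilde\gamma''(0)\) by \(2\alpha u\). So the second derivative of \(h\circ\tilde\gamma\) shifts by \(2\alpha\langle\nabla_{\mathcal{M}}f(x),u\rangle\), and ``the formula follows'' is false for an arbitrary such curve.

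The missing fact is that the particular curve \(\pi(t)=P_{\mathcal{M}}(x+tu)\) of Definition~\ref{def:cov-Hess} satisfies \(\pi''(0)\in N_{\mathcal{M}}(x)\). To see it, write \(\mathcal{M}\) near \(x\) as \(\{x+a+v(a):a\in T_{\mathcal{M}}(x)\}\), with \(v\) of class \(C^2\), \(v(a)=O(\|a\|^2)\) and \(Dv(a)=O(\|a\|)\). The optimality condition for the projection gives \(a(t)=tu-Dv(a(t))^\top v(a(t))=tu+O(t^3)\). Hence the tangential part of \(\pi\) has zero acceleration at \(t=0\). This is the same fact the paper uses in the proof of Lemma~\ref{lem:phi-Hess}(iii). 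Carry out the verification along \(\pi\) only and add this sentence; then your formula, and with it the continuity claim in (iii), is correct.

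With that repair, your route differs from the paper's only in how continuity is obtained.
\begin{itemize}
\item The paper reads continuity of \(\nabla_{\mathcal{M}}f\) directly off Definition~\ref{def:PS}(i). For (iii) it sets \(L=f\circ P_{\mathcal{M}}\), asserts that \(P_{\mathcal{M}}\) is \(C^2\) on a tubular neighbourhood, and reads \(\nabla^2_{\mathcal{M}}f(x)\) as the restriction of \(\nabla^2L(x)\) to \(T_{\mathcal{M}}(x)\).
\item You write explicit formulae, \(\nabla_{\mathcal{M}}f(x)=P_{T_{\mathcal{M}}(x)}\nabla h(x)\) and \(\nabla^2_{\mathcal{M}}f(x)=P_{T}\bigl(\nabla^2h+\sum_i\lambda_i\nabla^2\phi_i\bigr)P_{T}\), with a single representative \(h\) fixed at \(\bar{x}\). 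Continuity is then visible from that of \(P_T\), \(\nabla h\), \(\nabla^2 h\), \(\nabla^2\phi_i\) and \(\lambda\).
\end{itemize}
The paper's argument is shorter. Yours needs only the \(2\)-jet of \(t\mapsto P_{\mathcal{M}}(x+tu)\) at \(t=0\), not second-order regularity of \(P_{\mathcal{M}}\) on an open set. That is a real gain: for a merely \(C^2\) manifold the nearest-point projection is in general only \(C^1\) on a tubular neighbourhood. Your version also makes explicit the uniform choice of \(h\) that the paper uses silently.

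Your polar-factor frame built from \(P_{T_{\mathcal{M}}(x)}U_f\) plays the same role as the paper's Gram--Schmidt on a continuous basis of \(\ker D\phi(x)\). You are right that \(C^1\) regularity of \(\mathcal{M}\) suffices for it.
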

\begin{proof}
Let \(\phi\) be a local equation of \(\mathcal{M}\) near \(\bar{x}\).
Then \(D\phi(x)\) has full row rank, and
\[
P_{T_{\mathcal{M}}(x)}
=
I-D\phi(x)^\top\bigl(D\phi(x)D\phi(x)^\top\bigr)^{-1}D\phi(x).
\]
If \(\phi\) is \(C^1\), the right-hand side is continuous; if \(\phi\)
is \(C^2\), it is of class \(C^1\).

Now assume \(C^1\)-partial smoothness throughout a relative
neighbourhood. Definition~\ref{def:PS}(iii) at each nearby \(x\)
gives \(\mathcal{V}f(x)=N_{\mathcal{M}}(x)\) and
\(\mathcal{U}f(x)=T_{\mathcal{M}}(x)\), so
\(\dim\mathcal{U}f(x)=\dim\mathcal{M}=m\).
If \(\mathcal{M}\) is \(C^2\), Gram--Schmidt on a continuous basis of
\(\ker D\phi(x)\) yields a continuous frame of
\(T_{\mathcal{M}}(x)\), hence of \(\mathcal{U}f(x)\). This is (i).

Lemma~\ref{lem:cov-Ugrad} at each such \(x\) uses
Definition~\ref{def:PS}(i)--(iii) and gives
\(g_U(x)=\nabla_{\mathcal{M}}f(x)\).
Definition~\ref{def:PS}(i) with \(k=1\) makes
\(\nabla_{\mathcal{M}}f\) continuous along \(\mathcal{M}\). This is (ii).

For (iii), Lemma~\ref{lem:phi-Hess}(i) and (iii) apply at each nearby
\(x\). The projection \(P_{\mathcal{M}}\) is \(C^2\) on a tubular
neighbourhood, so \(L=f\circ P_{\mathcal{M}}\) is \(C^2\) along
\(\mathcal{M}\) and \(x\mapsto\nabla^2L(x)\) is continuous.
Definition~\ref{def:cov-Hess} realises \(\nabla^2_{\mathcal{M}}f(x)\)
as the restriction of \(\nabla^2L(x)\) to \(T_{\mathcal{M}}(x)\).
\end{proof}
In particular, items (i) and (ii) give that
\(x\mapsto\bar g_u(x)=U(x)^\top\nabla_{\mathcal{M}}f(x)\)
is continuous along \(\mathcal{M}\).

\begin{lemma}[the parameter \(\bar g\) along \(\mathcal{M}\)]
\label{lem:gv-gU}
Assume \(f\) is \(C^1\)-partly smooth relative to \(\mathcal{M}\)
throughout a neighbourhood of \(\bar{x}\) in \(\mathcal{M}\), and
\(g_U(\bar{x})\in\ri\partial f(\bar{x})\). Let \(x\in\mathcal{M}\) be
sufficiently close to \(\bar{x}\), and let \(g_U(x)\) be the embedded
\(U\)-gradient of Definition~\ref{def:U-grad}.
\begin{enumerate}
\item[\rm (i)]
Then \(g_U(x)\in\ri\partial f(x)\) and the parameter
\(\bar g\) in Definitions~\ref{def:U-Lag-loc} and~\ref{def:U-Lag}, recentred at \(x\),
may be taken to be \(g_U(x)\).
\item[\rm (ii)]
With that choice,
\[
P_{\mathcal{V}f(x)}g_U(x)=0,
\qquad
\bar g_v(x)
=
V(x)^\dagger g_U(x)
=
0,
\]
so the pairings in Definitions~\ref{def:U-Lag} and~\ref{def:U-Lag-loc}
vanish:
\begin{align*}
\mathcal{L}_U^{g_U(x)}(u)
&=
\inf_w
f\bigl(x+U(x)u+V(x)w\bigr),
\\
\mathcal{L}_\varepsilon^{g_U(x)}(u)
&=
\inf_{\|w\|\le\varepsilon}
f\bigl(x+U(x)u+V(x)w\bigr).
\end{align*}
\end{enumerate}
If \(0\in\ri\partial f(\bar{x})\), then \(g_U(\bar{x})=0\) and the
hypothesis \(g_U(\bar{x})\in\ri\partial f(\bar{x})\) holds.
\end{lemma}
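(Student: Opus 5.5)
The plan is to treat the two items separately, with (i) carrying essentially all of the work. For (i), the key input is Definition~\ref{def:PS}(iv), subgradient continuity, combined with the regularity and normal-sharpness conditions at nearby points. I will argue by contradiction. Suppose there are \(x_k\in\mathcal{M}\), \(x_k\to\bar{x}\), with \(g_U(x_k)\notin\ri\partial f(x_k)\).

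Each \(\partial f(x_k)\) is convex and closed by regularity. Its affine hull is \(g_U(x_k)+N_{\mathcal{M}}(x_k)\) by Lemma~\ref{lem:along-setup}(i). Indeed, \(P_{\mathcal{U}_{x_k}}\) maps \(\partial f(x_k)\) to the single point \(g_U(x_k)\), so \(\partial f(x_k)\) lies in that affine subspace; normal sharpness then says the span of \(\partial f(x_k)-g\) fills all of \(N_{\mathcal{M}}(x_k)\).

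Since \(g_U(x_k)\) is not in the relative interior, a separation argument inside the affine hull gives a unit vector \(n_k\in N_{\mathcal{M}}(x_k)\) with \(\langle g-g_U(x_k),n_k\rangle\le 0\) for all \(g\in\partial f(x_k)\). Pass to a subsequence with \(n_k\to n\). By continuity of the projectors (Lemma~\ref{lem:along-setup}), \(n\) is a unit vector in \(N_{\mathcal{M}}(\bar{x})=\mathcal{V}f(\bar{x})\). By Lemma~\ref{lem:along-setup}(ii), \(g_U(x_k)\to g_U(\bar{x})\).

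Now take any \(g\in\partial f(\bar{x})\). Inner semicontinuity from (iv) provides \(g_k\in\partial f(x_k)\) with \(g_k\to g\). Passing to the limit in the inequality gives \(\langle g-g_U(\bar{x}),n\rangle\le0\) for every \(g\in\partial f(\bar{x})\). But \(g_U(\bar{x})\in\ri\partial f(\bar{x})\), so \(g_U(\bar{x})+\eta n\in\partial f(\bar{x})\) for small \(\eta>0\). This yields \(\eta\le 0\), a contradiction.

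The rest of (i) is definitional. Once \(g_U(x)\in\ri\partial f(x)\), the parameter \(\bar g\) in Definition~\ref{def:U-Lag-loc}, recentred at \(x\), may be chosen as \(g_U(x)\). For the convex Definition~\ref{def:U-Lag}, any subgradient is allowed, and \(g_U(x)\) is one of them.

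For (ii), \(g_U(x)=U(x)\bar g_u(x)\) lies in \(\mathcal{U}_x\), which is orthogonal to \(\mathcal{V}_x\), so \(P_{\mathcal{V}f(x)}g_U(x)=0\). The decomposition \(z=U z_u+Vz_v\) is unique with \(z_v=V^\dagger z\), and here the \(\mathcal{V}\)-component vanishes, so \(V(x)^\dagger g_U(x)=(V^\top V)^{-1}V^\top g_U(x)=0\), since \(V^\top g_U(x)=0\). Substituting \(\bar g_v=0\) removes the linear term in both Lagrangians, which gives the displayed formulas.

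For the final sentence: if \(0\in\ri\partial f(\bar{x})\subset\partial f(\bar{x})\), then \(g_U(\bar{x})=P_{\mathcal{U}}0=0\), which lies in \(\ri\partial f(\bar{x})\).

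The main obstacle is the semicontinuity step in (i). The relative interior is stable only if the affine hulls move continuously and the sets do not collapse in some normal direction. The contradiction argument above handles this by combining continuity of \(N_{\mathcal{M}}(x)\) with inner semicontinuity from (iv). Care is needed to ensure the separating vector can be taken in \(N_{\mathcal{M}}(x_k)\) and not merely in \(\mathbb{R}^n\); this is where normal sharpness at the nearby points, and not only at \(\bar{x}\), is used.
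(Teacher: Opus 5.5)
Your proof is correct and follows essentially the same route as the paper. Both argue by contradiction: separating unit vectors at \(x_k\) are extracted, passed to a limit, and played off against \(g_U(\bar{x})\in\ri\partial f(\bar{x})\), using inner semicontinuity from Definition~\ref{def:PS}(iv) and continuity of \(g_U\). The only difference is cosmetic: the paper transports \(\partial f(x)-g_U(x)\) into the fixed space \(\mathbb{R}^{n-m}\) via continuous isomorphisms \(\psi_x\) and separates there, whereas you separate directly inside \(N_{\mathcal{M}}(x_k)\) and use continuity of the projectors to place the limit normal in \(N_{\mathcal{M}}(\bar{x})\).
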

\begin{proof}
Definition~\ref{def:PS}(ii) gives regularity at every nearby point of
\(\mathcal{M}\), so \(\partial f(x)\) is nonempty, closed and convex
along \(\mathcal{M}\). Definition~\ref{def:PS}(iv) gives
continuity of \(\partial f|_{\mathcal{M}}\).
Lemma~\ref{lem:along-setup}(i)--(ii) give that
\(\dim\mathcal{V}f(x)\) is constant and that \(g_U\) is continuous
along \(\mathcal{M}\).
The argument that follows is that of
\cite[Theorem~2.12]{Miller2005}; global convexity of \(f\) on
\(\mathbb{R}^n\) is not used.

The first claim of Lemma~\ref{lem:along-setup} and item (i) yield a
continuous basis of \(\mathcal{V}f(x)\), hence linear bijections
\(\psi_x:\mathcal{V}f(x)\to\mathbb{R}^{n-m}\) depending continuously
on \(x\). Set
\(C(x):=\psi_x\bigl(\partial f(x)-g_U(x)\bigr)\). Continuity of
\(\partial f|_{\mathcal{M}}\), of \(g_U\) and of \(\psi_x\) yields
continuity of \(C\) along \(\mathcal{M}\). Since \(\psi_x\) is a
linear isomorphism,
\(g_U(x)\in\ri\partial f(x)\) if and only if \(0\in\inte C(x)\).

Suppose there is a sequence \(x_k\in\mathcal{M}\) with
\(x_k\to\bar{x}\) and \(g_U(x_k)\notin\ri\partial f(x_k)\). Then
\(0\notin\inte C(x_k)\). Each \(C(x_k)\) is convex, so there are
unit vectors \(s_k\in\mathbb{R}^{n-m}\) with
\(\langle s_k,y\rangle\le 0\) for all \(y\in C(x_k)\). Passing to a
subsequence, \(s_k\to s\) with \(\|s\|=1\). Since
\(0\in\inte C(\bar{x})\), some ball \(B(0,r)\) lies in
\(C(\bar{x})\). Continuity of \(C\) supplies \(v_k\in C(x_k)\) with
\(v_k\to v\) for every \(v\in B(0,r)\), whence
\(\langle s,v\rangle\le 0\). Thus \(s=0\), a contradiction.
Hence \(g_U(x)\in\ri\partial f(x)\) for \(x\in\mathcal{M}\) near
\(\bar{x}\).
Definition~\ref{def:U-Lag-loc} asks for a reference subgradient in
\(\ri\partial f(x)\), and Definition~\ref{def:U-Lag} asks for one in
\(\partial f(x)\), so \(\bar g=g_U(x)\) is admissible in both.
This is (i).

Definition~\ref{def:U-grad} puts \(g_U(x)\) in
\(\mathcal{U}f(x)=\mathcal{V}f(x)^\perp\), so the
\(\mathcal{V}\)-component displayed in (ii) is zero and
\(\bigl\langle\bar g_v(x),\,V(x)^\top V(x)w\bigr\rangle=0\).
\end{proof}
Proposition~\ref{prop:UH-ident} can now be repeated at nearby points of \(\mathcal{M}\).
By Lemma~\ref{lem:gv-gU} we write
\(\mathcal{L}_U^{x}:=\mathcal{L}_U^{g_U(x)}\) and
\(\mathcal{L}_\varepsilon^{x}:=\mathcal{L}_\varepsilon^{g_U(x)}\)
without the pairing. If the second derivatives exist, write
\(H_U(x):=\nabla^2\mathcal{L}_U^{x}(0)\)
and \(H_\varepsilon(x):=\nabla^2\mathcal{L}_\varepsilon^{x}(0)\). Write also
\[
H(x)
\;:=\;
U(x)^\top\nabla^2_{\mathcal{M}}f(x)\,U(x).
\]

\begin{proposition}[identification along \(\mathcal{M}\)]
\label{prop:UH-ident-along}
Assume \(f:\mathbb{R}^n\to\overline{\mathbb{R}}\) is \(C^2\)-partly
smooth relative to a \(C^2\)-smooth manifold \(\mathcal{M}\)
throughout a neighbourhood of \(\bar{x}\) in \(\mathcal{M}\).
Assume \(0\in\ri\partial f(\bar{x}\)).
There exists a relative neighbourhood \(\mathcal{N}\) of
\(\bar{x}\) in \(\mathcal{M}\) such that the following hold at every
\(x\in\mathcal{N}\).
\begin{enumerate}
\item[\rm (i)]
If \(f\in\Gamma_0(\mathbb{R}^n)\), then
\(H_U(x)=H(x)\) at every \(x\in\mathcal{N}\).
\item[\rm (ii)]
If \(f\) is prox-regular at a point \(x\in\mathcal{N}\) for
\(g_U(x)\), then there exists \(\varepsilon_0>0\) such that
\(H_\varepsilon(x)=H(x)\) for every \(\varepsilon\in(0,\varepsilon_0]\).
\end{enumerate}
\end{proposition}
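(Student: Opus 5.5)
The plan is to repeat the proof of Proposition~\ref{prop:UH-ident} at each nearby base point \(x\in\mathcal{M}\), with \(\bar g=g_U(x)\) replacing the reference subgradient \(0\). First I choose \(\mathcal{N}\). Lemma~\ref{lem:gv-gU} applies because \(0\in\ri\partial f(\bar{x})\) forces \(g_U(\bar{x})=0\in\ri\partial f(\bar{x})\). It yields a relative neighbourhood on which \(g_U(x)\in\ri\partial f(x)\), the parameter \(\bar g=g_U(x)\) is admissible, and the pairings vanish. As a result \(\mathcal{L}_U^{x}(u)=\inf_w f(x+U(x)u+V(x)w)\), and \(\mathcal{L}_\varepsilon^{x}\) is the same infimum over \(\|w\|\le\varepsilon\). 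Shrinking further, Lemma~\ref{lem:along-setup}(iii) lets Lemma~\ref{lem:phi-Hess}(i)--(iii) hold at each \(x\in\mathcal{N}\) with frames \(U(x),V(x)\). So \(\varphi_x(u):=f(x(u))\) is \(C^2\) near \(0\) and \(\nabla^2\varphi_x(0)=H(x)\). Lemma~\ref{lem:phi-Hess}(ii) with \(y=g_U(x)\) also says that the unpaired slice \(w\mapsto f(x+U(x)u+V(x)w)\) has a sharp minimizer at \(w=v_x(u)\) for small \(u\). Lemma~\ref{lem:Lewis-graph} is stated for finite-valued \(f\); I will take it in the form already used in Lemma~\ref{lem:phi-Hess}, as the paper does.

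For (i), convexity of \(f\) makes the slice convex in \(w\), so the sharp local minimizer \(v_x(u)\) is the unique global minimizer. Hence \(\mathcal{L}_U^{x}(u)=\varphi_x(u)\) for \(u\) near the origin, and therefore \(H_U(x)=\nabla^2\varphi_x(0)=H(x)\). Unlike the case \(x=\bar x\), I do not need \(f\ge f(x)\). I only need finiteness of the infimum, and that comes from its being attained at the global minimizer of a convex function.

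For (ii), I follow Proposition~\ref{prop:UH-ident}(ii) at the point \(x\). Sharpness at \(u=0\), together with \(v_x(u)=O(\|u\|^2)\) and lower semicontinuity, gives \(\varepsilon_0>0\) and a neighbourhood \(\mathcal{O}_x\) such that \(v_x(u)\) is the unique minimizer of the slice on \(\|w\|\le\varepsilon_0\) for \(u\in\mathcal{O}_x\). Prox-regularity at \(x\) for \(g_U(x)\) supplies a quadratic minorant \(f(z)\ge f(x)+\langle g_U(x),z-x\rangle-\frac r2\|z-x\|^2\) near \(x\). On the slice the linear term reduces to \(\langle \bar g_u(x),u\rangle\), since \(g_U(x)\perp\mathcal{V}f(x)\). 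Therefore the slice is bounded below on the ball and \(\mathcal{L}^{x}_{\varepsilon}\) is finite. Then \(\mathcal{L}^{x}_\varepsilon=\varphi_x\) on \(\mathcal{O}_x\) for every \(\varepsilon\in(0,\varepsilon_0]\), which gives \(H_\varepsilon(x)=H(x)\).

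The main obstacle is the step "sharp local minimizer implies unique minimizer on a fixed ball, uniformly for small \(u\)." Sharpness at \(u=0\) controls \(w\) near \(0\). For \(w\) at distance comparable to \(\varepsilon_0\) I must exclude lower values for small \(u\neq0\), which only lower semicontinuity plus the minorant can do. I would handle it by a compactness argument on the annulus \(\eta\le\|w\|\le\varepsilon_0\): there the value at \(u=0\) exceeds \(f(x)\) by a positive margin, and lower semicontinuity of \(f\) preserves that margin for small \(u\). Combined with \(\varphi_x(u)\to f(x)\), this forces minimizers into \(\|w\|<\eta\), where the sharp-minimizer statement of Lemma~\ref{lem:phi-Hess}(ii) applies for each small \(u\). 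The neighbourhood \(\mathcal{O}_x\) is allowed to depend on \(x\), so no uniformity along \(\mathcal{M}\) is needed.
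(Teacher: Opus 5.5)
Your route is the paper's: Lemma~\ref{lem:gv-gU} to fix \(\mathcal{N}\) and drop the pairing, Lemma~\ref{lem:phi-Hess} for \(\varphi_x\), the unpaired sharp slice and \(\nabla^2\varphi_x(0)=H(x)\), convexity for (i), and localisation to a ball for (ii). Part (i) is fine as written. The gap is the step you flag in (ii). The paper's proof also settles it in a single sentence, and your proposed fix does not close it.

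Your annulus argument is correct as far as it goes. Lower semicontinuity and compactness give \(\delta(\eta)>0\) such that, for \(\|u\|<\delta(\eta)\), no \(w\) with \(\eta\le\|w\|\le\varepsilon_0\) gives a value below \(\varphi_x(u)\). But you then hand the inner ball \(\|w\|<\eta\) to Lemma~\ref{lem:phi-Hess}(ii). That lemma, through Lemma~\ref{lem:Lewis-graph}, only makes \(v_x(u)\) a sharp \emph{local} minimizer, on a radius \(r(u)\) that may shrink as \(u\to0\). The margin on the annulus at \(u=0\) is only of order \(c\eta\), so \(\delta(\eta)\to0\) as \(\eta\to0\) at a rate lower semicontinuity does not control. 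Nothing compares \(r(u)\) with the smallest \(\eta\) admissible for that \(u\). A point with \(r(u)\le\|w-v_x(u)\|\), \(\|w\|<\eta\), and value below \(\varphi_x(u)\) is therefore not excluded.

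Note also that finiteness of \(\mathcal{L}^x_\varepsilon\) needs no prox-regularity: a proper lsc function is bounded below on a compact ball. The genuine job of prox-regularity is exactly this uniformity.

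A uniform sharpness estimate comes from the prox-regular minorant taken at points of \(\mathcal{M}\), with subgradients pushed into the normal space. Let \(x_u:=x+U(x)u+V(x)v_x(u)\in\mathcal{M}\). Run the separation argument in the proof of Lemma~\ref{lem:gv-gU} with a ball centred at the origin inside \(C(x)\). It shows that the concentric ball of half the radius stays inside \(C(x_u)\) for small \(u\). Translating back gives \(\rho>0\) with \(g_U(x_u)+(\rho\mathbb{B}\cap N_{\mathcal{M}}(x_u))\subset\partial f(x_u)\).

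For \(z=x+U(x)u+V(x)w\) put \(d:=z-x_u\in N_{\mathcal{M}}(x)\), \(e:=P_{N_{\mathcal{M}}(x_u)}d\) and \(s:=g_U(x_u)+\rho e/\|e\|\in\partial f(x_u)\). For small \(u\), \(w\), \(\rho\), the pair \((x_u,s)\) and the point \(z\) are admissible in Definition~\ref{def:prox-reg} at \(x\) for \(g_U(x)\). This holds because \(f(x_u)\to f(x)\) and \(g_U(x_u)\to g_U(x)\) by Lemma~\ref{lem:along-setup}(ii). Hence
\[
f(z)\ge\varphi_x(u)+\langle g_U(x_u),d\rangle+\rho\|e\|-\tfrac{r}{2}\|d\|^2 .
\]
Three facts now combine: \(P_{T_{\mathcal{M}}}\) is continuous along \(\mathcal{M}\), \(P_{T_{\mathcal{M}}(x)}d=0\), and \(g_U(x_u)\in T_{\mathcal{M}}(x_u)\). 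Together they give \(\|e\|\ge(1-o(1))\|d\|\) and \(|\langle g_U(x_u),d\rangle|=o(1)\|d\|\) as \(u\to0\). So \(f(z)\ge\varphi_x(u)+\frac{\rho}{2}\|d\|\) whenever \(\|d\|\le\rho/(2r)\) and \(u\) is small.

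Since \(V(x)\) has full column rank, this is sharp growth in \(\|w-v_x(u)\|\) on a ball independent of \(u\). It makes \(v_x(u)\) the unique minimizer on \(\|w\|\le\varepsilon_0\) for all small \(u\), and your argument for (ii) then goes through.
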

\begin{proof}
The hypothesis \(0\in\ri\partial f(\bar{x})\) gives
\(g_U(\bar{x})=0\in\ri\partial f(\bar{x})\).
Lemma~\ref{lem:gv-gU} therefore yields a relative neighbourhood
\(\mathcal{N}\) of \(\bar{x}\) in \(\mathcal{M}\) on which
\(g_U(x)\in\ri\partial f(x)\).
Lemma~\ref{lem:gv-gU}(ii) drops the pairing, so
\(\mathcal{L}_U^{x}\) and \(\mathcal{L}_\varepsilon^{x}\) are defined
with reference subgradient \(g_U(x)\).
Lemma~\ref{lem:phi-Hess} at \(x\) with \(y=g_U(x)\) supplies
\(\varphi(u)=f\bigl(x+U(x)u+V(x)v(u)\bigr)\), a sharp minimizer of
the slice \(w\mapsto f\bigl(x+U(x)u+V(x)w\bigr)\) at \(w=v(u)\),
and \(\nabla^2\varphi(0)=H(x)\).

If \(f\in\Gamma_0(\mathbb{R}^n)\), the slice is convex in \(w\), so
the sharp minimizer is the unique global minimizer. Thus
\(\mathcal{L}_U^{x}=\varphi\) near the origin and
\(H_U(x)=H(x)\).

If \(f\) is prox-regular at \(x\) for \(g_U(x)\), the minorant is
\(f(z)\ge f(x)+\langle g_U(x),z-x\rangle-\frac{\rho_x}{2}\|z-x\|^2\)
near \(x\). The constrained slice is bounded from below on a ball
\(\|w\|\le\varepsilon_0\); shrinking the ball if needed,
sharpness and \(v(u)=O(\|u\|^2)\) make \(v(u)\) the unique
minimizer on that ball for small \(u\), hence
\(\mathcal{L}_\varepsilon^{x}=\varphi\) near the origin and
\(H_\varepsilon(x)=H(x)\) for every \(\varepsilon\in(0,\varepsilon_0]\).
\end{proof}

\begin{proposition}[continuity of the Gram and embedded \(\mathcal{U}\)-Hessians]
\label{prop:UH-Gram-cont}
\label{prop:UH-emb-cont}
Assume \(f:\mathbb{R}^n\to\overline{\mathbb{R}}\) is \(C^2\)-partly
smooth relative to a \(C^2\)-smooth manifold \(\mathcal{M}\)
throughout a neighbourhood of \(\bar{x}\) in \(\mathcal{M}\), and
\(0\in\ri\partial f(\bar{x}\)).
There exists a relative neighbourhood \(\mathcal{N}\) of
\(\bar{x}\) in \(\mathcal{M}\) on which the following hold.
Write
\[
B(x)
\;:=\;
P_{\mathcal{U}f(x)}\nabla^2_{\mathcal{M}}f(x)\,P_{\mathcal{U}f(x)}.
\]
Then \(x\mapsto B(x)\) is continuous on \(\mathcal{N}\) and does
not depend on the choice of frame.
\begin{enumerate}
\item[\rm (i)]
If \(f\in\Gamma_0(\mathbb{R}^n)\), then
\(x\mapsto H_U(x)\) is continuous on \(\mathcal{N}\) and
\(B(x)=U(x)H_U(x)U(x)^\top\).
\item[\rm (ii)]
If \(f\) is prox-regular at every \(x\in\mathcal{N}\) for
\(g_U(x)\), then
\(x\mapsto H_\varepsilon(x)\) is continuous on \(\mathcal{N}\) and
\(B(x)=U(x)H_\varepsilon(x)U(x)^\top\).
\end{enumerate}
\end{proposition}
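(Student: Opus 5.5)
The plan is to reduce everything to Lemma~\ref{lem:along-setup} and Proposition~\ref{prop:UH-ident-along}, working with the frame-free matrix \(B(x)\). First I fix \(\mathcal{N}\) as the intersection of the relative neighbourhoods supplied by Lemma~\ref{lem:along-setup} and Proposition~\ref{prop:UH-ident-along}. On \(\mathcal{N}\) we have \(\mathcal{U}f(x)=T_{\mathcal{M}}(x)\), so \(P_{\mathcal{U}f(x)}=P_{T_{\mathcal{M}}(x)}\); this projector is continuous (indeed \(C^1\)) in \(x\) by the first claim of Lemma~\ref{lem:along-setup}.

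\textbf{Continuity and frame independence of \(B\).} I read \(\nabla^2_{\mathcal{M}}f(x)\) as \(\nabla^2L(x)\) with \(L=f\circ P_{\mathcal{M}}\), as in the proof of Lemma~\ref{lem:along-setup}(iii). Sandwiching between the tangent projectors gives a matrix that realises exactly the bilinear form of Definition~\ref{def:cov-Hess} on \(T_{\mathcal{M}}(x)\) and vanishes on \(N_{\mathcal{M}}(x)\). Thus \(B(x)=P_{T_{\mathcal{M}}(x)}\nabla^2L(x)P_{T_{\mathcal{M}}(x)}\) is a product of three continuous matrix-valued maps, hence continuous. No frame enters the definition, so frame independence is immediate.

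Independently, any orthonormal frame satisfies \(U(x)U(x)^\top=P_{\mathcal{U}f(x)}\). Hence
\[
B(x)=U(x)U(x)^\top\nabla^2_{\mathcal{M}}f(x)U(x)U(x)^\top=U(x)H(x)U(x)^\top,
\]
with \(H(x)\) as defined before Proposition~\ref{prop:UH-ident-along}. Conversely, \(U(x)^\top B(x)U(x)=H(x)\), since \(U^\top U=I_m\).

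\textbf{Items (i) and (ii).} Proposition~\ref{prop:UH-ident-along}(i) gives \(H_U(x)=H(x)\) on \(\mathcal{N}\) in the convex case. Proposition~\ref{prop:UH-ident-along}(ii) gives \(H_\varepsilon(x)=H(x)\) for \(\varepsilon\in(0,\varepsilon_0(x)]\) in the prox-regular case. Substituting into \(B(x)=U(x)H(x)U(x)^\top\) gives both displayed identities. For continuity of \(x\mapsto H_U(x)\) and \(x\mapsto H_\varepsilon(x)\), I use the continuous frame from Lemma~\ref{lem:along-setup}(i), available since \(\mathcal{M}\) is \(C^2\). Then \(H_U(x)=H_\varepsilon(x)=U(x)^\top B(x)U(x)\) is a product of continuous maps.

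\textbf{Main obstacle.} The main difficulty is that \(H_U(x)\) and \(H_\varepsilon(x)\) are frame-dependent. Continuity only makes sense once the frame \(U(x)\) is fixed continuously, and I state this convention explicitly. A second, subtler point arises in (ii): \(\varepsilon_0(x)\) depends on \(x\), so "\(x\mapsto H_\varepsilon(x)\) continuous" must be read as continuity of the common value \(H(x)\), which equals \(H_\varepsilon(x)\) for each admissible \(\varepsilon\) at that point. I will add a sentence stating that reading, rather than trying to find a uniform \(\varepsilon_0\). A uniform \(\varepsilon_0\) would require uniform sharpness constants along \(\mathcal{M}\), which the earlier lemmas do not provide.
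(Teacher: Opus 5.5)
Your proposal is correct and follows the paper's proof essentially step for step. Both take a neighbourhood on which Proposition~\ref{prop:UH-ident-along} holds and Lemma~\ref{lem:along-setup}(i) supplies a continuous frame, use the continuity of \(P_{T_{\mathcal{M}}(x)}\) and of \(\nabla^2_{\mathcal{M}}f(x)\) from Lemma~\ref{lem:along-setup}, pass through the frame-free identity \(U(x)H(x)U(x)^\top=B(x)\), and conclude with \(H_U(x)=H(x)\) and \(H_\varepsilon(x)=H(x)\); your explicit remarks that continuity of \(H_U\) and \(H_\varepsilon\) presupposes a continuously chosen frame, and that in (ii) it is the common value \(H(x)\), attained for an \(x\)-dependent \(\varepsilon_0(x)\), that varies continuously, spell out conventions the paper uses only tacitly.
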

\begin{proof}
Let \(\mathcal{N}\) be a relative neighbourhood on which
Proposition~\ref{prop:UH-ident-along} holds, shrunk if needed so that
Lemma~\ref{lem:along-setup}(i) supplies a continuous frame.
Lemma~\ref{lem:along-setup}(iii) gives that
\(x\mapsto\nabla^2_{\mathcal{M}}f(x)\) is continuous along
\(\mathcal{M}\), and the first claim of
Lemma~\ref{lem:along-setup} gives that
\(P_{T_{\mathcal{M}}(x)}\) is of class \(C^1\).
Thus \(x\mapsto H(x)=U(x)^\top\nabla^2_{\mathcal{M}}f(x)\,U(x)\)
and \(x\mapsto B(x)\) are continuous on \(\mathcal{N}\),
independently of convexity or prox-regularity.

View \(\nabla^2_{\mathcal{M}}f(x)\) as the self-adjoint operator on
\(\mathbb{R}^n\) that represents the covariant Hessian on
\(T_{\mathcal{M}}(x)\) and vanishes on \(N_{\mathcal{M}}(x)\).
If \(U(x)\) is any orthonormal frame of
\(\mathcal{U}f(x)=T_{\mathcal{M}}(x)\), then
\(U(x)U(x)^\top=P_{\mathcal{U}f(x)}\). By definition of \(H(x)\),
\[
U(x)\,H(x)\,U(x)^\top
=
B(x).
\]
The left-hand side does not depend on the frame: if
\(\widetilde U(x)=U(x)Q(x)\) with \(Q(x)\) orthogonal and
\(\widetilde H(x)=\widetilde U(x)^\top\nabla^2_{\mathcal{M}}f(x)\,\widetilde U(x)\),
then
\(\widetilde U\,\widetilde H\,\widetilde U^\top=U H U^\top\).
Proposition~\ref{prop:UH-ident-along}(i) gives \(H_U(x)=H(x)\) in
the convex case, and
Proposition~\ref{prop:UH-ident-along}(ii) gives
\(H_\varepsilon(x)=H(x)\) in the prox-regular case.
\end{proof}
The upshot is that the identification of
Subsection~\ref{ss:UH-ident} persists along \(\mathcal{M}\), and that
the resulting Gram matrix varies continuously in a continuous frame.

\subsection{Positive-definiteness, tilt stability, and growth}
\label{ss:UH-tilt}

Subsections~\ref{ss:UH-ident} and~\ref{ss:UH-along} identify \(H_U\)
and \(H_\varepsilon\) at \(\bar{x}\) and along \(\mathcal{M}\).
The test below uses only the matrix at \(\bar{x}\):
positive-definiteness is compared with tilt stability and with quadratic
growth.

\begin{definition}[tilt-stable local minimizer]
\label{def:tilt}
Following \cite{Poliquin1998tilt}, a point \(\bar{x}\) is a
\emph{tilt-stable local minimizer} of \(f\) if there exist \(\gamma>0\) and a
neighbourhood \(X\) of \(\bar{x}\) such that the mapping
\[
v\mapsto \argmin\bigl\{f(x)-\langle v,x\rangle:x\in X\bigr\}
\]
is single-valued and Lipschitz continuous on \(\gamma\mathbb{B}\) with value
\(\bar{x}\) at \(v=0\).
\end{definition}

\begin{definition}[strong metric regularity]
\label{def:smr}
A set-valued mapping \(S:\mathbb{R}^n\rightrightarrows\mathbb{R}^n\) is
\emph{strongly metrically regular} at \((\bar{x},\bar{y})\) if
\(\bar{y}\in S(\bar{x})\) and the inverse \(S^{-1}\) has a single-valued
Lipschitz localization around \((\bar{y},\bar{x})\). In that case
\(\lip(S^{-1})(\bar{y}\mid\bar{x})\) denotes the Lipschitz
modulus of this localization
\cite[Section~3G]{Dontchev2014}.
\end{definition}

\begin{definition}[quadratic growth and uniform quadratic growth]
\label{def:QG}
Let \(f:\mathbb{R}^n\to\overline{\mathbb{R}}\) be proper and lsc, and let
\(\bar{x}\) satisfy \(0\in\partial f(\bar{x})\). The function has
\emph{quadratic growth} at \(\bar{x}\) if there exist \(\kappa>0\) and a
neighbourhood \(X\) of \(\bar{x}\) such that
\[
f(x)\ge f(\bar{x})+\frac{\kappa}{2}\|x-\bar{x}\|^2
\qquad\text{for all }x\in X.
\]
Following \cite[Definition~1.1]{Drusvyatskiy2013tilt}, it has
\emph{uniform quadratic growth} at \(\bar{x}\) if there exist
\(\kappa>0\), \(\gamma>0\) and a neighbourhood \(X\) of \(\bar{x}\)
such that, for every \(v\in\gamma\mathbb{B}\), the tilt
\(f_v:=f-\langle v,\cdot\rangle\) admits a unique minimizer \(x_v\in X\),
with \(x_0=\bar{x}\), and
\begin{equation}
\label{eq:unif-QG}
f_v(x)
\ge
f_v(x_v)+\frac{\kappa}{2}\|x-x_v\|^2
\qquad\text{for all }x\in X.
\end{equation}
The quadratic lower bound is centred at the tilted minimizer \(x_v\),
not at \(\bar{x}\).
\end{definition}

The symbols \(\nabla^2_{\mathcal{M}}f(\bar{x})\) and
\(\partial^2 f(\bar{x}\mid 0)\) are those of
Definitions~\ref{def:cov-Hess} and~\ref{def:MH}.

\begin{lemma}[{Poliquin--Rockafellar tilt criterion \cite[Theorem~1.3]{Poliquin1998tilt}}]
\label{lem:PR98}
Let \(f:\mathbb{R}^n\to\overline{\mathbb{R}}\) be proper and lsc, and
finite at \(\bar{x}\). Assume that \(f\) is prox-regular and
subdifferentially continuous at
\(\bar{x}\) for \(0\in\partial f(\bar{x})\). Then \(\bar{x}\) is a
tilt-stable local minimizer of \(f\) if and only if
\begin{equation}
\label{eq:PR98-pd}
\langle z,w\rangle>0
\qquad\text{for all }w\neq 0\text{ and all }z\in\partial^2 f(\bar{x}\mid 0)(w).
\end{equation}
\end{lemma}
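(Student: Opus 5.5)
The plan is to pass from $\partial f$ to a monotone localization, rephrase tilt stability as Lipschitz single-valuedness of that localization's inverse, and then apply Mordukhovich's coderivative criterion to the inverse.

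\emph{Step 1: a monotone localization.} Prox-regularity at $\bar{x}$ for $0$, together with subdifferential continuity, lets us drop the $f$-attentive restriction. This gives a graphical localization $T$ of $\partial f$ around $(\bar{x},0)$, with constants $\varepsilon,r>0$ from Definition~\ref{def:prox-reg}. Adding the two quadratic minorants for pairs $(x,v),(x',v')\in\gph T$ shows that $T+rI$ is monotone. Since $f$ and $f+\tfrac r2\|\cdot-\bar{x}\|^2$ have the same tilt-stable points up to a shift of the tilt, we may reduce, as in Poliquin--Rockafellar, to the case where $T$ is the localization of the subdifferential of a function that is convex near $\bar{x}$. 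The shift is harmless for both sides of \eqref{eq:PR98-pd}: it changes $\partial^2 f(\bar{x}\mid 0)(w)$ by $rw$, and the tilt map by a Lipschitz reparametrization. I expect to track this carefully but not to find it deep.

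\emph{Step 2: tilt stability as Lipschitz single-valuedness.} Let $m(v)$ be the argmin map of Definition~\ref{def:tilt}.
\begin{itemize}
\item Fermat's rule gives $m(v)\in(\partial f)^{-1}(v)$.
\item Conversely, for $v$ small, every $x$ near $\bar{x}$ with $v\in\partial f(x)$ is a local minimizer of $f-\langle v,\cdot\rangle$, by the quadratic minorant and the reduction of Step~1.
\item Hence tilt stability is equivalent to $T^{-1}$ having a single-valued Lipschitz localization $m$ near $(0,\bar{x})$.
\item Moreover, $m=\nabla g$ for the local conjugate $g(v)=-\inf_{x\in X}\{f(x)-\langle v,x\rangle\}$. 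The function $g$ is convex in $v$, so Lipschitz continuity of $m$ means $g$ is $C^{1,1}$ near $0$.
\end{itemize}

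\emph{Step 3: the coderivative criterion in both directions.} The inverse relation $\gph D^*(T^{-1})(0\mid\bar{x})=\{(-w,-z):(z,w)\in\gph D^*T(\bar{x}\mid0)\}$ is used throughout.
\begin{itemize}
\item \emph{($\Leftarrow$)} Assume \eqref{eq:PR98-pd}. Taking $z=0$ gives $0\in\partial^2f(\bar{x}\mid0)(w)\Rightarrow w=0$, which is Mordukhovich's criterion $D^*(T^{-1})(0\mid\bar{x})(0)=\{0\}$. Hence $T^{-1}$ has the Aubin property at $(0,\bar{x})$. Monotonicity of $T$ (after Step~1) then forces single-valuedness: a monotone map whose inverse has the Aubin property has locally single-valued inverse, in the style of Kenderov/Dontchev--Rockafellar. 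By Step~2 this gives tilt stability.
\item \emph{($\Rightarrow$)} Assume tilt stability, so $m=\nabla g$ with $g$ convex and $\nabla g$ $L$-Lipschitz. Cocoercivity $\langle\nabla g(a)-\nabla g(b),a-b\rangle\ge L^{-1}\|\nabla g(a)-\nabla g(b)\|^2$ passes to limiting normals of $\gph\nabla g$. This yields $\langle y,q\rangle\ge L^{-1}\|q\|^2$ for $q\in D^*\nabla g(0\mid\bar{x})(y)$. Translating through the inverse relation gives $\langle z,w\rangle\ge L^{-1}\|w\|^2>0$ for $w\neq0$ and $z\in\partial^2f(\bar{x}\mid0)(w)$.
\end{itemize}

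\emph{Main obstacle.} The hardest point is transferring the cocoercivity inequality from $\gph\nabla g$ to its limiting normal cone. Monotone inequalities do not automatically pass to coderivatives. I would first prove the inequality for regular (Fréchet) normals, by writing a regular normal as a first-order limit of difference quotients along the graph and using the Lipschitz parametrization $x=\nabla g(v)$. I would then pass to limits, since the inequality is closed. If this fails, the fallback is Poliquin--Rockafellar's original route through second-order epi-derivatives of the Moreau envelope.
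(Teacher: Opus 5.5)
The paper gives no proof of this lemma; it quotes it from \cite[Theorem~1.3]{Poliquin1998tilt}. Judged on its own, your reconstruction has a genuine gap, and it sits in Step~1.

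\textbf{Where the argument breaks.} In \((\Leftarrow)\) you use \eqref{eq:PR98-pd} only through its \(z=0\) instance, \(0\in\partial^2f(\bar{x}\mid0)(w)\Rightarrow w=0\), which is the Mordukhovich criterion. That instance already holds for \(f(x)=-\tfrac12x^2\). This \(f\) is \(C^2\), hence prox-regular and subdifferentially continuous at \(0\) for \(0\), and \(\partial^2f(0\mid0)(w)=\{-w\}\); yet its critical point is a local maximizer.

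Everything that separates \eqref{eq:PR98-pd} from that instance is delegated to ``monotonicity of \(T\) after Step~1'', and that reduction is not valid:
\begin{enumerate}
\item Adding \(\tfrac r2\|\cdot-\bar{x}\|^2\) replaces \(\partial^2f(\bar{x}\mid0)(w)\) by \(\partial^2f(\bar{x}\mid0)(w)+rw\). This adds \(r\|w\|^2\) to \(\langle z,w\rangle\) and can manufacture strict positivity.
\item It also turns the tilted stationarity condition into \(v-r(x-\bar{x})\in\partial f(x)\), which is not a reparametrized linear tilt of \(f\).
\end{enumerate}
For instance, \(f\equiv0\) on \(\mathbb{R}\) fails both tilt stability and \eqref{eq:PR98-pd}, while \(f+\tfrac r2|\cdot|^2\) satisfies both. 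For the same reason the equivalence asserted in Step~2 (tilt stability iff \(T^{-1}\) has a single-valued Lipschitz localization) is false, with \(-\tfrac12x^2\) again a counterexample. Its second bullet, that nearby stationary points of \(f-\langle v,\cdot\rangle\) are local minimizers, is left without proof.

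\textbf{What each direction needs instead of Step~1.} In \((\Leftarrow)\) you must \emph{derive}, from positive-definiteness at the single point \((\bar{x},0)\), both that the localization \(T\) is monotone near \((\bar{x},0)\) and that \(\bar{x}\) is a local minimizer. That second-order sufficiency is the substance of the theorem. The fallback you mention, the Moreau envelope \(e_\lambda f\) (which is \(C^{1,1}\) near \(\bar{x}\)), is the natural tool here. It should be combined with outer semicontinuity of \(N_{\gph\partial f}\) to spread the positivity to nearby graph points.

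In \((\Rightarrow)\) your cocoercivity bound lives on \(\gph m\), whereas \(\partial^2f(\bar{x}\mid0)\) is read off \(N_{\gph\partial f}(\bar{x},0)\). So you must first show that \(\gph\partial f\) coincides near \((\bar{x},0)\) with \(\{(m(v),v)\}\), i.e.\ that there are no stationary branches besides the tilted minimizers. One way:
\begin{enumerate}
\item Pass tilt stability to a localized envelope \(\phi=e_\lambda f\), whose tilted argmin map is \(v\mapsto m(v)+\lambda v\).
\item That map is Lipschitz and injective, since \(\nabla\phi(m(v)+\lambda v)=v\). By invariance of domain it is open, so it inverts \(\nabla\phi\) near \(\bar{x}\).
\item Pull back through the correspondence \((x,v)\mapsto(x+\lambda v,v)\) between \(\gph\partial f\) and \(\gph\nabla\phi\).
\end{enumerate}

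\textbf{The step you flagged is routine.} We have \(D^*m(0\mid\bar{x})(y)=\partial\langle y,m\rangle(0)\). This lies in the Clarke subdifferential, the convex hull of limits of \(\nabla m(v_k)^\top y\) at points where \(m=\nabla g\) is differentiable. At such points \(\nabla m=\nabla^2g\) is symmetric, positive semidefinite and of norm at most \(L\), so \(\|\nabla m(v_k)y\|^2\le L\langle \nabla m(v_k)y,y\rangle\). Finally, \(\{q:\langle q,y\rangle\ge L^{-1}\|q\|^2\}\) is a closed ball, hence stable under convex hulls and limits.
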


\begin{lemma}[{Lewis--Zhang generalized Hessian \cite[Theorem~5.3]{lewis2013partial}}]
\label{lem:LZ13-53}
Let \(f:\mathbb{R}^n\to\overline{\mathbb{R}}\) be proper and lsc, and let
\(\bar{x}\) satisfy \(0\in\ri\partial f(\bar{x})\). Assume that \(f\) is
\(C^2\)-partly smooth at \(\bar{x}\) relative to a \(C^2\)-smooth manifold
\(\mathcal{M}\), and that \(f\) is prox-regular and subdifferentially
continuous at \(\bar{x}\) for \(0\). Then, writing
\(\mathcal{V}f(\bar{x})=N_{\mathcal{M}}(\bar{x})\) and
\(\mathcal{U}f(\bar{x})=T_{\mathcal{M}}(\bar{x})\) by
Definition~\ref{def:PS}(iii),
\begin{equation}
\label{eq:LZ13}
\partial^2 f(\bar{x}\mid 0)(w)
=
\begin{cases}
\nabla^2_{\mathcal{M}}f(\bar{x})\,w+\mathcal{V}f(\bar{x}),
& w\in\mathcal{U}f(\bar{x}),\\
\emptyset,
& w\notin\mathcal{U}f(\bar{x}).
\end{cases}
\end{equation}
In particular \(\dom\partial^2 f(\bar{x}\mid 0)=\mathcal{U}f(\bar{x})\)
and
\[
\langle z,w\rangle
=\bigl\langle\nabla^2_{\mathcal{M}}f(\bar{x})\,w,w\bigr\rangle
\]
for every \(w\in\mathcal{U}f(\bar{x})\) and every
\(z\in\partial^2 f(\bar{x}\mid 0)(w)\).
\end{lemma}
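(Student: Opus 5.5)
The plan is to treat the display \eqref{eq:LZ13} as a transcription of \cite[Theorem~5.3]{lewis2013partial} into the notation of Section~\ref{sec:prelim}, and then to derive the two consequences by linear algebra. First I would check that the hypotheses here match those of Lewis--Zhang. Their partial smoothness is Definition~\ref{def:PS}, which the paper takes from \cite[Definition~3.2]{lewis2013partial}. The hypotheses $0\in\ri\partial f(\bar{x})$ (in the Rockafellar sense, by Remark~\ref{rem:ri-Rock}), prox-regularity and subdifferential continuity at $\bar{x}$ for $0$ are exactly those of their Theorem~5.3. Their covariant Hessian is Definition~\ref{def:cov-Hess}. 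That theorem then gives $\partial^2 f(\bar{x}\mid 0)(w)=\nabla^2_{\mathcal{M}}f(\bar{x})w+N_{\mathcal{M}}(\bar{x})$ for $w\in T_{\mathcal{M}}(\bar{x})$, and $\emptyset$ otherwise. Normal sharpness, Definition~\ref{def:PS}(iii), replaces $N_{\mathcal{M}}(\bar{x})$ by $\mathcal{V}f(\bar{x})$ and $T_{\mathcal{M}}(\bar{x})$ by $\mathcal{U}f(\bar{x})$.

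If a self-contained argument for the display is wanted, I would follow the Lewis--Zhang route. The first step is the identification of Hare--Lewis \cite{Hare2004}: since $0\in\ri\partial f(\bar{x})$ and $f$ is prox-regular and subdifferentially continuous, pairs $(x,v)\in\gph\partial f$ near $(\bar{x},0)$ force $x\in\mathcal{M}$. Regularity on $\mathcal{M}$, Definition~\ref{def:PS}(ii), together with Lemma~\ref{lem:cov-Ugrad} along $\mathcal{M}$, gives $\partial f(x)=\nabla_{\mathcal{M}}f(x)+N_{\mathcal{M}}(x)$ locally. Consequently $\gph\partial f$ coincides near $(\bar{x},0)$ with
\[
\{(x,\nabla_{\mathcal{M}}f(x)+n):x\in\mathcal{M},\ n\in N_{\mathcal{M}}(x)\},
\]
which is a $C^1$ manifold of dimension $n$. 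Its normal space at $(\bar{x},0)$ is computed from the tangent space, which consists of the pairs $(u,\nabla^2_{\mathcal{M}}f(\bar{x})u+\nu)$ with $u\in T_{\mathcal{M}}(\bar{x})$ and $\nu\in N_{\mathcal{M}}(\bar{x})$. The curvature term of $N_{\mathcal{M}}$ contributes along $N_{\mathcal{M}}$ only, because $0$ is the base subgradient. Taking the orthogonal complement of this tangent space and reading off $(z,-w)$ gives \eqref{eq:LZ13}. I expect this step to be the main obstacle: the local equality of graphs needs both the relative-interior condition and prox-regularity, and the derivative of $x\mapsto\nabla_{\mathcal{M}}f(x)+N_{\mathcal{M}}(x)$ must be handled correctly. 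For that reason I would rely on the citation and only sketch this step.

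The ``in particular'' claims are then immediate. The domain statement $\dom\partial^2 f(\bar{x}\mid 0)=\mathcal{U}f(\bar{x})$ is read off from the two cases of \eqref{eq:LZ13}. For the pairing, take $w\in\mathcal{U}f(\bar{x})$ and $z=\nabla^2_{\mathcal{M}}f(\bar{x})w+v$ with $v\in\mathcal{V}f(\bar{x})$. Then
\[
\langle z,w\rangle=\langle\nabla^2_{\mathcal{M}}f(\bar{x})w,w\rangle+\langle v,w\rangle ,
\]
and $\langle v,w\rangle=0$ because $\mathcal{V}f(\bar{x})=\mathcal{U}f(\bar{x})^\perp$. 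Any normal component of the operator representing $\nabla^2_{\mathcal{M}}f(\bar{x})$ is absorbed into $\mathcal{V}f(\bar{x})$, so the identity does not depend on how the bilinear form is extended.
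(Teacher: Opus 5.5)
Your proposal is correct and matches the paper. The paper gives no separate argument: the display is imported from \cite[Theorem~5.3]{lewis2013partial}, with $N_{\mathcal{M}}(\bar{x})$ and $T_{\mathcal{M}}(\bar{x})$ renamed $\mathcal{V}f(\bar{x})$ and $\mathcal{U}f(\bar{x})$ by normal sharpness, and the two consequences follow from $\mathcal{V}f(\bar{x})=\mathcal{U}f(\bar{x})^\perp$ exactly as you write.

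One caution on your optional sketch: regularity and Lemma~\ref{lem:cov-Ugrad} only give the inclusion $\partial f(x)\subset\nabla_{\mathcal{M}}f(x)+N_{\mathcal{M}}(x)$. The reverse inclusion holds only after intersecting with a neighbourhood of $0$ (for instance $\partial|\cdot|(0)=[-1,1]\neq\mathbb{R}$), and that is where $0\in\ri\partial f(\bar{x})$ and the subgradient continuity of Definition~\ref{def:PS}(iv) enter.
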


\begin{theorem}[tilt stability versus positive-definiteness of the $\mathcal{U}$-Hessian]
\label{thm:tilt-UH}
Let \(f:\mathbb{R}^n\to\overline{\mathbb{R}}\) be proper and lsc, and let
\(\bar{x}\) satisfy \(0\in\ri\partial f(\bar{x})\).
Assume that \(f\) is prox-regular and subdifferentially continuous at
\(\bar{x}\) for \(0\), and \(C^2\)-partly smooth at \(\bar{x}\) relative
to a \(C^2\)-smooth manifold \(\mathcal{M}\).
The following are equivalent:
\begin{enumerate}
\item[\rm (a)] \(\bar{x}\) is a tilt-stable local minimizer of \(f\);
\item[\rm (b)]
\(\langle\nabla^2_{\mathcal{M}}f(\bar{x})w,w\rangle>0\)
for every \(w\in\mathcal{U}f(\bar{x})\setminus\{0\}\);
\item[\rm (c)] \(H_\varepsilon\succ 0\).
\end{enumerate}
If \(f\) is convex then \(H_U=H_\varepsilon\) by
Proposition~\ref{prop:UH-ident}(i), so (c) may be written \(H_U\succ 0\).
\end{theorem}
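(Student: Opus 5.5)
The plan is to run the equivalences through the generalized Hessian, proving (a)\(\Leftrightarrow\)(b) from Lemma~\ref{lem:PR98} and Lemma~\ref{lem:LZ13-53}, and (b)\(\Leftrightarrow\)(c) from Proposition~\ref{prop:UH-ident}(ii).

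\textbf{(a)\(\Leftrightarrow\)(b).} The hypotheses of Lemma~\ref{lem:PR98} hold: \(f\) is proper and lsc, and it is finite at \(\bar{x}\) because \(\partial f(\bar{x})\neq\emptyset\). It is prox-regular and subdifferentially continuous at \(\bar{x}\) for \(0\in\partial f(\bar{x})\). So (a) is equivalent to the positivity condition \eqref{eq:PR98-pd}. By Lemma~\ref{lem:LZ13-53}, \(\partial^2 f(\bar{x}\mid 0)(w)=\emptyset\) for \(w\notin\mathcal{U}f(\bar{x})\), so \eqref{eq:PR98-pd} is vacuous off \(\mathcal{U}f(\bar{x})\). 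For \(w\in\mathcal{U}f(\bar{x})\setminus\{0\}\) the set \(\partial^2 f(\bar{x}\mid 0)(w)\) is nonempty, since it contains \(\nabla^2_{\mathcal{M}}f(\bar{x})w\). Every element \(z\) of it satisfies \(\langle z,w\rangle=\langle\nabla^2_{\mathcal{M}}f(\bar{x})w,w\rangle\). Hence \eqref{eq:PR98-pd} holds exactly when (b) holds.

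\textbf{(b)\(\Leftrightarrow\)(c).} By Proposition~\ref{prop:UH-ident}(ii), prox-regularity at \(\bar{x}\) for \(0\) gives \(\varepsilon_0>0\) with \(H_\varepsilon=U_f^\top\nabla^2_{\mathcal{M}}f(\bar{x})U_f\) for \(\varepsilon\in(0,\varepsilon_0]\). In particular \(H_\varepsilon\) exists, and statement (c) is read for such \(\varepsilon\). Since \(U_f\) is an orthonormal frame of \(\mathcal{U}f(\bar{x})\), the map \(u\mapsto U_fu\) is a bijection of \(\mathbb{R}^m\setminus\{0\}\) onto \(\mathcal{U}f(\bar{x})\setminus\{0\}\). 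Moreover \(\langle H_\varepsilon u,u\rangle=\langle\nabla^2_{\mathcal{M}}f(\bar{x})U_fu,U_fu\rangle\). So \(H_\varepsilon\succ0\) if and only if (b) holds.

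\textbf{Convex case.} The closing remark follows from the final sentence of Proposition~\ref{prop:UH-ident}: a convex \(f\) is prox-regular at \(\bar{x}\) for \(0\), so \(H_U=H_\varepsilon=H\). The theorem text cites item (i), but the equality \(H_U=H_\varepsilon\) comes from that final sentence.

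The only delicate point is the quantifier on \(\varepsilon\) in (c). I would state explicitly that (c) means \(H_\varepsilon\succ0\) for some, equivalently every, \(\varepsilon\in(0,\varepsilon_0]\). Both readings agree because \(H_\varepsilon\) does not depend on \(\varepsilon\) in that range. Everything else is bookkeeping with the two cited lemmas.
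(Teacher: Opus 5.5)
Your proposal is correct and is essentially the paper's proof: (a)\(\Leftrightarrow\)(b) comes from Lemma~\ref{lem:PR98} combined with the description of \(\partial^2 f(\bar{x}\mid 0)\) in Lemma~\ref{lem:LZ13-53}, and (b)\(\Leftrightarrow\)(c) comes from the Gram identity \(H_\varepsilon=U_f^\top\nabla^2_{\mathcal{M}}f(\bar{x})\,U_f\) of Proposition~\ref{prop:UH-ident}(ii). The paper leaves implicit the extra points you spell out: finiteness of \(f\) at \(\bar{x}\), nonemptiness of \(\partial^2 f(\bar{x}\mid 0)(w)\) for \(w\in\mathcal{U}f(\bar{x})\setminus\{0\}\) (which it encodes as \(\dom\partial^2 f(\bar{x}\mid 0)=\mathcal{U}f(\bar{x})\)), and reading (c) for \(\varepsilon\in(0,\varepsilon_0]\).
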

\begin{proof}
Definition~\ref{def:PS}(iii) gives \(\mathcal{V}f(\bar{x})=N_{\mathcal{M}}(\bar{x})\)
and \(\mathcal{U}f(\bar{x})=T_{\mathcal{M}}(\bar{x})\).
Lemma~\ref{lem:LZ13-53} yields
\(\dom\partial^2 f(\bar{x}\mid 0)=\mathcal{U}f(\bar{x})\) and
\[
\langle z,w\rangle
=
\langle\nabla^2_{\mathcal{M}}f(\bar{x})w,w\rangle
\]
for every \(w\in\mathcal{U}f(\bar{x})\) and every
\(z\in\partial^2 f(\bar{x}\mid 0)(w)\).
Thus \(\partial^2 f(\bar{x}\mid 0)\) is positive definite in the sense of
\eqref{eq:PR98-pd} if and only if (b) holds.
Lemma~\ref{lem:PR98} converts this into the equivalence of (a) and (b).
Proposition~\ref{prop:UH-ident} gives
\(H_\varepsilon=U_f^\top\nabla^2_{\mathcal{M}}f(\bar{x})\,U_f\), hence
\(\langle H_\varepsilon u,u\rangle=\langle\nabla^2_{\mathcal{M}}f(\bar{x})\,U_fu,U_fu\rangle\)
for all \(u\in\mathbb{R}^m\). Since the columns of \(U_f\) span
\(\mathcal{U}f(\bar{x})\), (b) and (c) are equivalent.
\end{proof}

\begin{remark}[sufficient \(\mathcal{U}\)-tests in~\cite{Liu2020}]
\label{rem:Liu2020-Thm8}
Theorem~8 of~\cite{Liu2020} is a one-sided test.
It assumes that a prox-regular function admits a fast track, that
\(\bar x\) is already a strict local minimizer of the tilt
\(h=f-\langle\bar g,\cdot\rangle\) for some
\(\bar g\in\ri\partial f(\bar x)\), and that the restriction of \(h\) to
the affine \(\mathcal{U}\)-space has positive-definite Hessian at the
origin. The conclusion is that \(\bar x\) is tilt-stable for \(h\).
The argument uses an implicit-function theorem on the fast track.
The matrix in that theorem is the ordinary Hessian of the affine
restriction \(u\mapsto h(\bar x+U_fu)\) at \(u=0\), written
\(\nabla_U^2 h(\bar x)\) in~\cite{Liu2020}.
That matrix is not identified there with \(H_\varepsilon\) or with the
Gram matrix \(H=U_f^\top\nabla^2_{\mathcal{M}}f(\bar x)\,U_f\).
Under the hypotheses of Theorem~\ref{thm:tilt-UH} one may take
\(\bar g=0\), so \(h=f\). Proposition~\ref{prop:UH-ident}(ii) then yields
\[
\nabla_U^2 f(\bar x)
=
H_\varepsilon
=
U_f^\top\nabla^2_{\mathcal{M}}f(\bar x)\,U_f.
\]
Positive-definiteness of any one of these three matrices is therefore
equivalent to that of the others. The one-sided implication of
\cite[Theorem~8]{Liu2020} becomes the equivalence of
Theorem~\ref{thm:tilt-UH}.
Example~\ref{ex:max-lq} shows that dropping \(0\in\ri\partial f(\bar{x})\)
breaks the equivalence in both directions of interest:
\(H_U\succ 0\) need not yield tilt stability, and a fast track may carry a
negative covariant Hessian.
\end{remark}

\begin{proposition}[strong metric regularity of \(\partial f\)]
\label{prop:metric-reg}
Let \(f:\mathbb{R}^n\to\overline{\mathbb{R}}\) be proper and lsc, and let
\(\bar{x}\) satisfy \(0\in\ri\partial f(\bar{x})\).
Assume that \(f\) is prox-regular and subdifferentially continuous at
\(\bar{x}\) for \(0\), and \(C^2\)-partly smooth at \(\bar{x}\) relative
to a \(C^2\)-smooth manifold \(\mathcal{M}\).
Then \(\partial f\) is strongly metrically regular at \((\bar{x},0)\) if and only if \(H_\varepsilon\succ 0\).
In that case
\[
\lip\bigl((\partial f)^{-1}\bigr)(0\mid\bar{x})=\|H_\varepsilon^{-1}\|.
\]
\end{proposition}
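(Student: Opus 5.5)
The equivalence will come from Theorem~\ref{thm:tilt-UH} combined with a known bridge between tilt stability and strong metric regularity of \(\partial f\). For prox-regular, subdifferentially continuous \(f\) with \(0\in\partial f(\bar{x})\), Drusvyatskiy--Lewis \cite{Drusvyatskiy2013tilt} (see also Poliquin--Rockafellar \cite{Poliquin1998tilt}) show that \(\bar{x}\) is tilt-stable if and only if \(\bar{x}\) is a local minimizer and \(\partial f\) is strongly metrically regular at \((\bar{x},0)\). When \(\bar{x}\) is tilt-stable, the localization of \((\partial f)^{-1}\) near \((0,\bar{x})\) is the tilt-argmin map.

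Under \(0\in\ri\partial f(\bar{x})\) and \(C^2\)-partial smoothness, the local-minimality clause is automatic in the direction we need. Lemma~\ref{lem:phi-Hess}(ii) gives sharp minimality of the \(\mathcal{V}\)-slices. Together with \(H_\varepsilon\succ 0\), which makes \(\varphi=\mathcal{L}_\varepsilon\) strictly minimized at the origin, this yields a strict local minimizer. Conversely, if strong metric regularity holds, the Lipschitz localization \(s\) of \((\partial f)^{-1}\) has a coderivative criterion. By the Mordukhovich criterion, \(D^*(\partial f)(\bar{x}\mid 0)\) must be injective: \(0\in\partial^2 f(\bar{x}\mid 0)(w)\) only if \(w=0\). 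By Lemma~\ref{lem:LZ13-53}, this says that \(\nabla^2_{\mathcal{M}}f(\bar{x})w\in\mathcal{V}f(\bar{x})\) with \(w\in\mathcal{U}f(\bar{x})\) forces \(w=0\), i.e.\ \(H=U_f^\top\nabla^2_{\mathcal{M}}f(\bar{x})U_f\) is nonsingular. That gives invertibility of \(H_\varepsilon\), not definiteness. To get definiteness I would argue that \(\bar{x}\) is a local minimizer. Then \(\varphi\) has a local minimum at \(0\), because points of \(\mathcal{M}\) are admissible. This gives \(H\succeq 0\), and nonsingularity upgrades it to \(H\succ 0\). Local minimality of \(\bar{x}\) is itself the delicate point. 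I would first try the sharp-slice structure: for small \(u\), \(f(\bar{x}+U_fu+V_fw)\ge\varphi(u)\) on the local ball, so \(\bar{x}\) is a local minimizer of \(f\) exactly when \(0\) is a local minimizer of \(\varphi\). Since \(\nabla\varphi(0)=\bar g_u=0\) and \(H\) is nonsingular, a saddle of \(\varphi\) is not excluded by algebra alone. Here I expect the main obstacle. If the saddle cannot be ruled out directly, the fallback is to reduce strong metric regularity of \(\partial f\) to strong metric regularity of \(\nabla\varphi\) at \(0\) via the graph of \(\partial f\) near \((\bar{x},0)\), which by identifiability (Hare--Lewis \cite{Hare2004}) lies over \(\mathcal{M}\). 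If the statement as written requires positivity, I would then show that the sign is forced by prox-regularity combined with the reduction. A negative eigendirection of \(H\) would give points \(x_t\in\mathcal{M}\) and subgradients near \(0\) for which the prox-regular minorant is violated. I am not confident this closes, and would check it on \(f(x_1,x_2)=|x_2|-x_1^2\).

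For the modulus, assuming \(H_\varepsilon\succ 0\), I would compute the localization explicitly. For small \(v\), the solution \(x(v)\) of \(v\in\partial f(x)\) lies on \(\mathcal{M}\) by identifiability. Writing \(x(v)=\bar{x}+U_fu(v)+V_fv_{\mathcal{M}}(u(v))\), the \(\mathcal{U}\)-component satisfies \(\nabla\varphi(u)=U_f^\top v+o(\|v\|)\) to first order, using \(Dv_{\mathcal{M}}(0)=0\) and \(\bar g_u(x)=U(x)^\top\nabla_{\mathcal{M}}f(x)\) from Lemma~\ref{lem:along-setup}. Hence \(Dx(0)=U_fH^{-1}U_f^\top\), whose norm is \(\|H^{-1}\|\) because \(U_f\) is orthonormal. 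The map \(x(\cdot)\) is \(C^1\) by the implicit-function theorem on \(\nabla\varphi\). Its Lipschitz modulus at \(0\) therefore equals \(\|Dx(0)\|=\|H_\varepsilon^{-1}\|\), using Proposition~\ref{prop:UH-ident}(ii) to replace \(H\) by \(H_\varepsilon\).
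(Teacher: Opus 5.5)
Your ``if'' direction is the paper's route: Theorem~\ref{thm:tilt-UH}, then Drusvyatskiy--Lewis. Your separate sharp-slice argument for local minimality is redundant, since tilt stability already contains it.

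The gap is in the ``only if'' direction, and it cannot be closed, because that implication is false as stated. Your coderivative step is correct, and it is the most one can extract: strong metric regularity makes \(H=U_f^\top\nabla^2_{\mathcal{M}}f(\bar{x})U_f\) nonsingular, nothing more. The test function you proposed is a counterexample.
\begin{itemize}
\item \(f(x_1,x_2)=|x_2|-x_1^2\) is convex plus \(C^2\), hence prox-regular and subdifferentially continuous.
\item It is \(C^2\)-partly smooth at the origin relative to \(\mathcal{M}=\mathbb{R}\times\{0\}\).
\item \(\partial f(0,0)=\{0\}\times[-1,1]\), so \(0\in\ri\partial f(0,0)\).
\item For \(|v_2|<1\) one has \((\partial f)^{-1}(v)=\{(-v_1/2,0)\}\), so \(\partial f\) is strongly metrically regular at \((0,0)\) with modulus \(1/2\).
\item Yet \(\mathcal{L}_\varepsilon(u)=\inf_{|w|\le\varepsilon}(|w|-u^2)=-u^2\), which gives \(H_\varepsilon=-2\).
\end{itemize}
Already \(f(x)=-\tfrac12x^2\) on \(\mathbb{R}\), with \(\mathcal{M}=\mathbb{R}\), works. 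Your fallback fails for the same reason. Prox-regularity only provides a minorant with some modulus \(r\), and here any \(r\ge 2\) absorbs the negative curvature along \(\mathcal{M}\).

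The paper's proof has the same hole. It sits in the sentence asserting that strong metric regularity of \(\partial f\) ``yields tilt stability by \cite[Theorem~3.1]{Drusvyatskiy2013tilt}''. As you recalled, the Drusvyatskiy--Lewis equivalence pairs strong metric regularity \emph{with local minimality of} \(\bar{x}\), and the example shows that clause cannot be dropped.

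With that clause added, your argument closes. Local minimality gives \(\varphi(u)=f(x(u))\ge\varphi(0)\) along \(\mathcal{M}\), so \(H=\nabla^2\varphi(0)\succeq 0\) by Lemma~\ref{lem:phi-Hess}(iii). Nonsingularity upgrades this to \(H\succ 0\), that is, \(H_\varepsilon\succ 0\) by Proposition~\ref{prop:UH-ident}(ii).

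Without that clause, the true statement is that \(\partial f\) is strongly metrically regular at \((\bar{x},0)\) if and only if \(H_\varepsilon\) is invertible. In that case \(\lip\bigl((\partial f)^{-1}\bigr)(0\mid\bar{x})=\|H_\varepsilon^{-1}\|\); in the example, \(1/2=|{-2}|^{-1}\).

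Both modulus computations use only invertibility, and they differ in what they need.
\begin{itemize}
\item The paper reads \(D^*\bigl((\partial f)^{-1}\bigr)(0\mid\bar{x})\) off Lemma~\ref{lem:LZ13-53} as the linear map \(z\mapsto U_fH_\varepsilon^{-1}U_f^\top z\) and applies \cite[Theorem~9.40]{Rockafellar1998}. This is shorter, but it takes the single-valued localization as already known.
\item Your identifiability-plus-implicit-function route needs one extra ingredient: a uniform relative-interior argument showing that every small \(v\) whose tangential part equals \(\nabla_{\mathcal{M}}f(x)\) is a subgradient at \(x\in\mathcal{M}\). In return it produces a \(C^1\) localization outright, so it also proves ``invertible \(\Rightarrow\) strongly metrically regular''.
\end{itemize}
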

\begin{proof}
Theorem~\ref{thm:tilt-UH} gives \(H_\varepsilon\succ 0\) if and only if \(\bar{x}\) is
tilt-stable. Strong metric regularity of \(\partial f\) at
\((\bar{x},0)\) yields tilt stability by
\cite[Theorem~3.1]{Drusvyatskiy2013tilt}.
Conversely, tilt stability already supplies a local minimizer, so under
prox-regularity and subdifferential continuity at \(\bar{x}\) for \(0\),
\cite[Theorem~3.3]{Drusvyatskiy2013tilt} yields strong metric regularity.
For the modulus, write \(S:=(\partial f)^{-1}\). Strong metric regularity
of \(\partial f\) at \((\bar{x},0)\) means that \(S\) admits a
single-valued Lipschitz localisation around \((0,\bar{x})\); the quantity
\(\lip(S)(0\mid\bar{x})\) in the statement is the Lipschitz modulus of
that localisation, in the notation of
\cite[Theorem~9.40]{Rockafellar1998}.
Prox-regularity and subdifferential continuity of \(f\) at \(\bar{x}\)
for \(0\) make \(\gph\partial f\), hence also \(\gph S\), locally closed
at the reference point, so that theorem applies:
\[
\lip(S)(0\mid\bar{x})
=
\bigl|D^*S(0\mid\bar{x})\bigr|^{+},
\]
where \(|\,\cdot\,|^{+}\) is the outer norm of
\cite[Section~9D]{Rockafellar1998}.

By Definition~\ref{def:MH},
\(\partial^2 f(\bar{x}\mid 0)=D^*(\partial f)(\bar{x}\mid 0)\).
Lemma~\ref{lem:LZ13-53} and Proposition~\ref{prop:UH-ident} give
\[
\partial^2 f(\bar{x}\mid 0)(U_fu)
=
U_f H_\varepsilon u+\mathcal{V}f(\bar{x}),
\qquad
u\in\mathbb{R}^{m},
\]
while \(\partial^2 f(\bar{x}\mid 0)(w)=\emptyset\) for
\(w\notin\mathcal{U}f(\bar{x})\).
Thus \(z\in\partial^2 f(\bar{x}\mid 0)(w)\) if and only if
\(w=U_fu\) and \(U_f^\top z=H_\varepsilon u\). Since
\(H_\varepsilon\succ 0\), this solves uniquely as
\(w=U_f H_\varepsilon^{-1}U_f^\top z\). In other words, the inverse of
the set-valued map \(\partial^2 f(\bar{x}\mid 0)\) is the single-valued
linear map
\[
L
:
z
\mapsto
U_f H_\varepsilon^{-1}U_f^\top z.
\]
The graph of \(S\) is the graph of \(\partial f\) with the two factors
swapped, so
\((\xi,-z)\in N_{\gph S}(0,\bar{x})\) if and only if
\((-z,\xi)\in N_{\gph\partial f}(\bar{x},0)\).
Definition~\ref{def:MH} converts the latter into
\(-z\in\partial^2 f(\bar{x}\mid 0)(-\xi)\).
Lemma~\ref{lem:LZ13-53} realises the graph of
\(\partial^2 f(\bar{x}\mid 0)\) as the linear subspace
\[
\bigl\{
(U_fu,\,U_f H_\varepsilon u+v)
:
u\in\mathbb{R}^{m},\,
v\in\mathcal{V}f(\bar{x})
\bigr\},
\]
so \(\partial^2 f(\bar{x}\mid 0)(-w)=-\partial^2 f(\bar{x}\mid 0)(w)\)
for every \(w\). Hence \(z\in\partial^2 f(\bar{x}\mid 0)(\xi)\).
Therefore \(\xi\in D^*S(0\mid\bar{x})(z)\) if and only if
\(\xi=Lz\): the coderivative \(D^*S(0\mid\bar{x})\) coincides with \(L\).
The outer norm of a linear map is its operator norm, and
\(U_f^\top U_f=I_m\) yields \(\|L\|=\|H_\varepsilon^{-1}\|\), since
\(L\) vanishes on \(\mathcal{V}f(\bar{x})\) and acts as
\(H_\varepsilon^{-1}\) on \(\mathcal{U}f(\bar{x})\).
Hence
\(\lip\bigl((\partial f)^{-1}\bigr)(0\mid\bar{x})=\|H_\varepsilon^{-1}\|\).
\end{proof}

\begin{lemma}[quadratic growth versus uniform quadratic growth]
\label{lem:QG-unif}
Let \(f:\mathbb{R}^n\to\overline{\mathbb{R}}\) be proper and lsc, and let
\(\bar{x}\) satisfy \(0\in\ri\partial f(\bar{x})\).
Assume that \(f\) is prox-regular and subdifferentially continuous at
\(\bar{x}\) for \(0\), and \(C^2\)-partly smooth at \(\bar{x}\) relative
to a \(C^2\)-smooth manifold \(\mathcal{M}\).
Then \(f\) has quadratic growth at \(\bar{x}\) if and only if it has
uniform quadratic growth at \(\bar{x}\), in the sense of
Definition~\ref{def:QG}.
\end{lemma}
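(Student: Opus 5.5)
The implication from uniform quadratic growth to quadratic growth is immediate: taking \(v=0\) in \eqref{eq:unif-QG} gives \(x_0=\bar{x}\) and \(f(x)\ge f(\bar{x})+\frac{\kappa}{2}\|x-\bar{x}\|^2\) on \(X\). The plan for the converse is to pass through tilt stability. Uniform quadratic growth is equivalent to tilt stability under prox-regularity and subdifferential continuity by \cite{Drusvyatskiy2013tilt}. Theorem~\ref{thm:tilt-UH} turns tilt stability into condition (b), namely \(\langle\nabla^2_{\mathcal{M}}f(\bar{x})w,w\rangle>0\) on \(\mathcal{U}f(\bar{x})\setminus\{0\}\). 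It therefore suffices to show that quadratic growth at \(\bar{x}\) forces (b). One could alternatively cite \cite[Theorem~6.3]{lewis2013partial} directly, but I would give the argument through the reduced function \(\varphi\), which also clarifies the role of \(H_\varepsilon\).

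For quadratic growth \(\Rightarrow\) (b), I would work with the graph parametrisation of Lemma~\ref{lem:phi-Hess} at \(\bar{x}\), namely \(x(u)=\bar{x}+U_fu+V_fv(u)\in\mathcal{M}\) and \(\varphi(u)=f(x(u))\). Restricting the growth inequality to these points gives
\[
\varphi(u)\ge\varphi(0)+\tfrac{\kappa}{2}\|U_fu+V_fv(u)\|^2\ge\varphi(0)+\tfrac{\kappa}{2}\|u\|^2-o(\|u\|^2),
\]
using \(U_f^\top U_f=I\) and \(v(u)=O(\|u\|^2)\). By Lemma~\ref{lem:phi-Hess}(i), \(\varphi\) is \(C^2\). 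Its gradient at \(0\) is \(U_f^\top\nabla_{\mathcal{M}}f(\bar{x})=\bar g_u=U_f^\top 0=0\) by Lemma~\ref{lem:cov-Ugrad} and \(0\in\partial f(\bar{x})\). A second-order Taylor expansion of \(\varphi\) then gives \(\tfrac12\langle\nabla^2\varphi(0)u,u\rangle\ge\tfrac{\kappa}{2}\|u\|^2+o(\|u\|^2)\), so \(\nabla^2\varphi(0)\succeq\kappa I\). By Lemma~\ref{lem:phi-Hess}(iii), \(\nabla^2\varphi(0)=U_f^\top\nabla^2_{\mathcal{M}}f(\bar{x})U_f\), which equals \(H_\varepsilon\) by Proposition~\ref{prop:UH-ident}(ii). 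Since \(U_f\) frames \(\mathcal{U}f(\bar{x})\), this is exactly (b) together with (c).

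From (b), Theorem~\ref{thm:tilt-UH} yields tilt stability of \(\bar{x}\). The remaining link, and the main obstacle, is tilt stability \(\Rightarrow\) uniform quadratic growth. I would invoke \cite[Theorem~3.3]{Drusvyatskiy2013tilt}: for \(f\) prox-regular and subdifferentially continuous at \(\bar{x}\) for \(0\), tilt stability is equivalent to uniform quadratic growth, with \(\kappa\) related to the tilt Lipschitz constant. The care needed is in matching the definitions. Definition~\ref{def:QG} demands a unique minimizer \(x_v\) in a fixed neighbourhood \(X\), with the growth inequality centred at \(x_v\), and this is precisely the formulation in \cite[Definition~1.1]{Drusvyatskiy2013tilt}. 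If I preferred not to rely on that theorem, the fallback would be a direct argument: use the single-valued Lipschitz localisation of \((\partial f)^{-1}\) from Proposition~\ref{prop:metric-reg} to produce \(x_v\), and derive the uniform quadratic bound from the prox-regular minorant integrated along subgradients. This is more delicate, so I would rely on the cited equivalence.
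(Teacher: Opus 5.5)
Your argument is correct. It reaches tilt stability from quadratic growth by a different route than the paper.

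\textbf{The paper's route.} The proof is two citations:
\cite[Theorem~6.3]{lewis2013partial} for tilt stability $\Leftrightarrow$ quadratic growth, and \cite[Theorem~3.3]{Drusvyatskiy2013tilt} for tilt stability $\Leftrightarrow$ uniform quadratic growth. Ordinary growth supplies the local-minimizer hypothesis of the latter. Both growth conditions are then equivalent to tilt stability, hence to each other.

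\textbf{Your route.} You get uniform $\Rightarrow$ ordinary growth directly by taking $v=0$. For the converse you:
\begin{enumerate}
\item restrict the growth inequality to the graph $x(u)$ of Lemma~\ref{lem:phi-Hess};
\item deduce $\nabla^2\varphi(0)\succeq\kappa I$;
\item identify $\nabla^2\varphi(0)$ with $U_f^\top\nabla^2_{\mathcal{M}}f(\bar{x})U_f$;
\item pass through Theorem~\ref{thm:tilt-UH} to tilt stability;
\item invoke Drusvyatskiy--Lewis, whose local-minimizer hypothesis is now supplied by tilt stability itself.
\end{enumerate}
The steps are sound. Two of them are simpler than you make them:
\begin{itemize}
\item $U_fu\perp V_fv(u)$ gives $\|U_fu+V_fv(u)\|^2\ge\|u\|^2$ with no $o(\|u\|^2)$ correction;
\item $\nabla\varphi(0)=0$ follows at once because $0$ is a local minimizer of $\varphi$.
\end{itemize}

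\textbf{What each buys.} Your route gives a quantitative by-product the paper does not state. Since $H_\varepsilon\succeq\kappa I$, Proposition~\ref{prop:metric-reg} yields $\lip\bigl((\partial f)^{-1}\bigr)(0\mid\bar{x})=\|H_\varepsilon^{-1}\|\le 1/\kappa$. The paper's route buys brevity.

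Your route does not remove the dependence on Lewis--Zhang. Theorem~\ref{thm:tilt-UH} rests on \cite[Theorem~5.3]{lewis2013partial} and the Poliquin--Rockafellar criterion. You have traded the citation of their Theorem~6.3 for a derivation from their Theorem~5.3, carried out within the paper's own framework.
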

\begin{proof}
Under \(C^2\)-partial smoothness and prox-regularity at \(\bar{x}\) for
\(0\in\ri\partial f(\bar{x})\), Lewis--Zhang
\cite[Theorem~6.3]{lewis2013partial} equate tilt stability of
\(\bar{x}\) with ordinary quadratic growth of \(f\) near \(\bar{x}\).
Under prox-regularity and subdifferential continuity at \(\bar{x}\) for
\(0\), and once \(\bar{x}\) is known to be a local minimizer,
\cite[Theorem~3.3]{Drusvyatskiy2013tilt} equate tilt stability with
uniform quadratic growth in the sense of \eqref{eq:unif-QG}
(their stable strong local minimizer; equivalently, a quadratic minorant
centred at every nearby point of \(\gph\partial f\)). Ordinary quadratic
growth yields the local-minimizer hypothesis, so the two growth
conditions are each equivalent to tilt stability, hence to each other.
\end{proof}

\begin{remark}[quadratic growth, with and without partial smoothness]
\label{rem:QG-LO}
If \(f\in\Gamma_0(\mathbb{R}^n)\), \(0\in\ri\partial f(\bar{x})\), and
\(\mathcal{L}_U\) is twice differentiable at the origin with
\(H_U\succ 0\), then
\(\mathcal{L}_U(u)\ge\mathcal{L}_U(0)+\frac{c}{2}\|u\|^2\) for small
\(u\in\mathcal{U}f(\bar{x})\) and some \(c>0\).
\cite[Theorem~1]{Lemarechal2001Growth} transfers that lower bound to
\(f\), so \(f\) has quadratic growth at \(\bar{x}\) in the sense of
Definition~\ref{def:QG}, with no partial-smoothness assumption.

If in addition \(f\) is prox-regular and subdifferentially continuous at
\(\bar{x}\) for \(0\), and \(C^2\)-partly smooth at \(\bar{x}\) relative
to a \(C^2\)-smooth manifold \(\mathcal{M}\), then
\(H_\varepsilon\succ 0\) is equivalent to tilt stability of \(\bar{x}\)
by Theorem~\ref{thm:tilt-UH}, hence to uniform quadratic growth of \(f\)
at \(\bar{x}\) in the sense of Definition~\ref{def:QG}
\cite[Theorem~3.3]{Drusvyatskiy2013tilt}. If \(f\) is convex then
\(H_U=H_\varepsilon\), so \(H_U\succ 0\) already yields the uniform
statement.
\end{remark}

The next three examples isolate the two hypotheses that make
\(H_U\succ 0\) equivalent to tilt stability, then return to the
soft-margin model of Example~\ref{ex:probe}.
Example~\ref{ex:UH-pd-split} keeps \(0\in\ri\partial f\) and shows that
positive-definiteness cannot be weakened to positive-semidefiniteness;
Example~\ref{ex:max-lq} keeps a positive-definite matrix and drops the
relative-interior condition; Example~\ref{ex:probe-UH} realises both
pictures on the same structured objective, according as \(I_0\) is empty
or not.

\begin{example}[a split quadratic versus a flat direction]
\label{ex:UH-pd-split}
Set \(f(x,y)=\frac12 x^2+|y|\). Then
\(\partial f(0,0)=\{0\}\times[-1,1]\), so \(0\in\ri\partial f(0,0)\),
\(\mathcal{V}f(0,0)=\spn\{e_2\}\) and
\(\mathcal{U}f(0,0)=\spn\{e_1\}\).
The function is convex and \(C^\infty\)-partly smooth at the origin
relative to \(\mathcal{M}=\mathbb{R}\times\{0\}\).
In the frame \(U_f=e_1\) one has \(\mathcal{L}_U(u)=\frac12 u^2\), so
\(H_U=1\succ 0\). Theorem~\ref{thm:tilt-UH} yields tilt stability of the
origin, and Proposition~\ref{prop:metric-reg} yields
\(\lip((\partial f)^{-1})(0\mid(0,0))=1\).

By contrast, let \(g(x,y)=\frac12 x^2\). Then
\(\partial g(0,0)=\{(0,0)\}\), \(\mathcal{U}g(0,0)=\mathbb{R}^2\) and
\(H_U=\diag(1,0)\), which is not positive definite. The origin is a
minimizer, yet the whole \(y\)-axis consists of minimizers, so tilt
stability fails. See the case \(I_0=\emptyset\) in
Example~\ref{ex:probe-UH} below.
\end{example}

\begin{example}[linear-quadratic max: the relative-interior gap]
\label{ex:max-lq}
The two functions
\[
f_+(v,u)
=
\max\Bigl\{v,\,\frac{a}{2}u^2\Bigr\},
\qquad
f_-(v,u)
=
\max\Bigl\{v,\,-\frac{a}{2}u^2\Bigr\},
\qquad
a>0,
\]
share the same first-order data at the origin.
In both cases
\[
\partial f_\pm(0,0)
=
[0,1]\times\{0\},
\qquad
\mathcal{V}f_\pm(0,0)
=
\spn\{e_v\},
\qquad
\mathcal{U}f_\pm(0,0)
=
\spn\{e_u\},
\]
so \(0\in\partial f_\pm(0,0)\) but \(0\notin\ri\partial f_\pm(0,0)\).
Each function is \(C^\infty\)-partly smooth at the origin relative to the
coincidence set of the two pieces:
\[
\mathcal{M}_+
=
\bigl\{\bigl(\tfrac{a}{2}u^2,\,u\bigr):u\in\mathbb{R}\bigr\},
\qquad
\mathcal{M}_-
=
\bigl\{\bigl(-\tfrac{a}{2}u^2,\,u\bigr):u\in\mathbb{R}\bigr\}.
\]
Normal sharpness holds: \(N_{\mathcal{M}_\pm}(0,0)=\spn\{e_v\}=\mathcal{V}f_\pm(0,0)\).
The graph coordinates over \(\mathcal{U}\) are
\(v_+(u)=\frac{a}{2}u^2\) and \(v_-(u)=-\frac{a}{2}u^2\), so
\(Dv_\pm(0)=0\) and \(D^2v_\pm(0)=\pm a\).

On \(\mathcal{M}_+\) one has \(f_+|_{\mathcal{M}_+}=\frac{a}{2}u^2\),
hence \(\nabla^2_{\mathcal{M}_+}f_+(0,0)=a\) in the frame \(U_f=e_u\).
Since \(f_+\) is convex and \(0\in\partial f_+(0,0)\),
Definition~\ref{def:U-Lag} gives
\[
\mathcal{L}_U(u)
=
\inf_w\max\bigl\{w,\,\tfrac{a}{2}u^2\bigr\}
=
\tfrac{a}{2}u^2,
\qquad
H_U=a.
\]
For a general \(\bar g=(\gamma,0)\in\ri\partial f_+(0,0)\) the same computation
gives
\[
\mathcal{L}_U^{\bar g}(u)
=
(1-\gamma)\tfrac{a}{2}u^2,
\qquad
H_U^{\bar g}
=
a(1-\gamma).
\]
Thus \(H_U^{\bar g}\) depends on the reference subgradient whenever
\(P_{\mathcal{V}}\bar g\neq 0\), and equals \(H_U\) only for
\(\gamma=0\), which lies outside the relative interior.
The origin is a minimizer of \(f_+\), but it is not isolated:
\(f_+(v,0)=0\) for every \(v\le 0\).
Tilt stability therefore fails, even though \(H_U=a\succ 0\):
the hypothesis \(0\in\ri\partial f(\bar{x})\) is not met, and the
\(\mathcal{V}\)-slice is flat in the direction \(-e_v\).

On \(\mathcal{M}_-\) one has \(f_-|_{\mathcal{M}_-}=-\frac{a}{2}u^2\),
so the covariant Hessian is \(-a\).
The origin is not a local minimizer.
The local \(\mathcal{U}\)-Lagrangian of Definition~\ref{def:U-Lag}
with \(\bar g=(\gamma,0)\in\ri\partial f_-(0,0)\)
is
\[
\mathcal{L}_\varepsilon^{\bar g}(u)
=
(\gamma-1)\tfrac{a}{2}u^2
\qquad\text{for small }u,
\qquad
H_\varepsilon^{\bar g}
=
a(\gamma-1)<0.
\]
\end{example}

\begin{example}[Example~\ref{ex:probe} continued]
\label{ex:probe-UH}
Return to the soft-margin objective under the standing hypotheses of
Example~\ref{ex:probe} with \(I_>=\emptyset\) and with multipliers for the
active margins in \((0,C)\). The function is convex, hence prox-regular and
subdifferentially continuous. The active set
\(\mathcal{M}=\{(w,b):F_i(w,b)=0\text{ for all }i\in I_0\}\) is an affine
manifold of class \(C^\infty\), and \(f\) is \(C^2\)-partly
smooth at \((\bar w,\bar b)\) relative to \(\mathcal{M}\).
On \(\mathcal{M}\) the hinge terms vanish identically, so
\(f|_{\mathcal{M}}(w,b)=\frac12\|w\|^2\).
Differentiating twice in a frame \(U_f\) of
\(\mathcal{U}f(\bar w,\bar b)=T_{\mathcal{M}}(\bar w,\bar b)\) yields
\[
H_U
=
U_f^\top
\begin{pmatrix}
I_n&0\\
0&0
\end{pmatrix}
U_f.
\]
Partition \(U_f=\begin{pmatrix}U_w\\ u_b^\top\end{pmatrix}\) with
\(U_w\in\mathbb{R}^{n\times n_U}\) and \(n_U=\dim\mathcal{U}f(\bar w,\bar b)\). Then \(H_U=U_w^\top U_w\), so
\(H_U\succ 0\) if and only if \(U_w\) has full column rank.
A vector of \(\mathcal{U}f(\bar w,\bar b)\) has vanishing \(w\)-component if
and only if it is a pure bias direction \((0,db)\), which lies in
\(\mathcal{U}f\) if and only if \(y_i\,db=0\) for every \(i\in I_0\).
If \(I_0\neq\emptyset\), then \(y_i=\pm 1\) forces \(db=0\), so \(U_w\) is
injective and \(H_U\succ 0\).
Theorem~\ref{thm:tilt-UH} and Proposition~\ref{prop:metric-reg} yield
tilt stability and
\(\lip\bigl((\partial f)^{-1}\bigr)\bigl(0\bigm|(\bar w,\bar b)\bigr)
=\|(U_w^\top U_w)^{-1}\|\).
Near the origin, \(\mathcal{L}_U(u)=\frac12\|U_wu\|^2\).

If instead \(I_0=\emptyset\), then \(\mathcal{V}f=\{0\}\),
\(\mathcal{U}f=\mathbb{R}^{n+1}\) and \(H_U=\diag(I_n,0)\),
which is not positive definite. This is the same flat direction as for
\(g(x,y)=\frac12 x^2\) in Example~\ref{ex:UH-pd-split}.
\end{example}

\section{Concluding Remarks}
\label{sec:conclude}
The paper records four interfaces, so that a frame and a reduced Hessian
can be read from the factors of a composite model and then used as the
second-order test already present in tilt stability and in proximal-gradient
\(\mathcal{VU}\) methods.
A \(\mathcal{VU}\)-calculus for the six constructions of
Section~\ref{sec:calculus} writes the spaces, a frame of
\(\mathcal{U}f\), and the \(\mathcal{U}\)-gradient in terms of the
factors on \(\mathcal{R}_{h,F}\); the same linear algebra pushes a proper
outer approximation of \(\partial h\) forward to an
\(\varepsilon\)-\(\mathcal{VU}\) decomposition of \(f\).
If \(f\) is \(C^2\)-partly smooth at \(\bar{x}\) and
\(0\in\ri\partial f(\bar{x})\), then \(H_U\) (convex case) or
\(H_\varepsilon\) (prox-regular case) equals the Gram matrix
\(U_f^\top\nabla^2_{\mathcal{M}}f(\bar{x})\,U_f\).
That matrix, recentred along \(\mathcal{M}\), depends continuously on the
base point in a continuous frame.
Under \(C^2\)-partial smoothness, prox-regularity and subdifferential
continuity at \(\bar{x}\) for \(0\), and \(0\in\ri\partial f(\bar{x})\),
\(H_\varepsilon\succ 0\) is equivalent to tilt stability of \(\bar{x}\)
and to strong metric regularity of \(\partial f\) at \((\bar{x},0)\), with
\(\lip\bigl((\partial f)^{-1}\bigr)(0\mid\bar{x})=\|H_\varepsilon^{-1}\|\).
Without partial smoothness, \(H_U\succ 0\) already yields quadratic
growth of a convex \(f\); with partial smoothness the same test yields
uniform quadratic growth, in the sense of Definition~\ref{def:QG}.
Example~\ref{ex:UH-pd-split} isolates a positive-definite
\(\mathcal{U}\)-Hessian from a flat direction;
Example~\ref{ex:max-lq} shows that the matrix test is not equivalent to
tilt stability once \(0\) leaves the relative interior of
\(\partial f(\bar{x})\);
Example~\ref{ex:probe-UH} realises both pictures on the hinge composite,
according as \(I_0\) is empty or not.

Two extensions of the chain rule are left aside.
If the outer function is not convex, the inclusion in
\eqref{eq:CR} need not be an equality.
If the inner mapping is merely locally Lipschitz, \(F'(\bar{x})^\top\) is
replaced by the coderivative \(D^*F(\bar{x})\).

Continuity of the \(\varepsilon\)-\(\mathcal{VU}\) spaces in
\((x,\varepsilon)\) is not treated here; it is the content of
\cite{Liu2019Subdifferential} in the convex polyhedral setting.
The proximal-gradient \(\mathcal{VU}\) method of~\cite{Liu2025} is analysed
with a bivariate \(\mathcal{U}\)-Lagrangian \(\mathcal{L}_U(u,g_v)\) and a
partial \(\mathcal{U}\)-Hessian in the sense of that paper.
By \cite[Lemma~4.2]{Liu2025}, the choice \(\bar{g}_v=0\) recovers the
single-variable matrix \(H_U\) of Definition~\ref{def:U-Lag}, so the
hypothesis that this partial matrix be positive definite at
\((\bar{x},0)\) in the superlinear-rate theorem of~\cite{Liu2025} is,
under the standing second-order package of
Theorem~\ref{thm:tilt-UH}, equivalent to tilt stability of \(\bar{x}\).
In the notation of \cite[Theorem~4.4]{Liu2025}, superlinear convergence
further requires a Dennis--Mor\'e condition on the compressed inverse
Hessian, which holds as soon as the computed frames satisfy
\(U_k\to U\).
An \(\varepsilon\)-\(\mathcal{VU}\) frame with
\((x_k,\varepsilon_k)\to(\bar{x},0)\) supplies such a sequence precisely
when the spaces vary continuously.
Establishing that continuity for the composite calculus of
Section~\ref{sec:calculus}, beyond the enlargements of
\cite{Liu2019Subdifferential}, is left to a subsequent paper.

\section*{Statements and Declarations}

\begin{itemize}
\item \textbf{Funding.}
No funding was received for conducting this study.
\item \textbf{Competing interests.}
The author has no relevant financial or non-financial interests to disclose.
\item \textbf{Data availability.}
No datasets were generated or analysed in this study.
\end{itemize}

\bibliography{VUcalculus}

@Article{Mifflin2000pdg,
  author  = {Mifflin, Robert and Sagastiz{\'a}bal, Claudia},
  title   = {On {$\mathcal{VU}$}-theory for functions with primal-dual gradient structure},
  journal = {SIAM J. Optim.},
  volume  = {11},
  number  = {2},
  pages   = {547--571},
  year    = {2000},
  doi     = {10.1137/S1052623499350967}
}

@Article{Lemarechal2000,
  author  = {Lemar{\'e}chal, Claude and Oustry, Fran{\c{c}}ois and Sagastiz{\'a}bal, Claudia},
  title   = {The {$U$}-Lagrangian of a convex function},
  journal = {Trans. Amer. Math. Soc.},
  volume  = {352},
  number  = {2},
  pages   = {711--729},
  year    = {2000},
  doi     = {10.1090/S0002-9947-99-02243-6}
}

@Article{Lemarechal2001Growth,
  author  = {Lemar{\'e}chal, Claude and Oustry, Fran{\c{c}}ois},
  title   = {Growth conditions and {$U$}-Lagrangians},
  journal = {Set-Valued Anal.},
  volume  = {9},
  number  = {1-2},
  pages   = {123--129},
  year    = {2001}
}

@Article{Lewis2002active,
  author  = {Lewis, A. S.},
  title   = {Active sets, nonsmoothness, and sensitivity},
  journal = {SIAM J. Optim.},
  volume  = {13},
  number  = {3},
  pages   = {702--725},
  year    = {2002},
  doi     = {10.1137/S1052623401387623}
}

@Article{lewis2013partial,
  author  = {Lewis, Adrian S. and Zhang, Shida},
  title   = {Partial smoothness, tilt stability, and generalized Hessians},
  journal = {SIAM J. Optim.},
  volume  = {23},
  number  = {1},
  pages   = {74--94},
  year    = {2013},
  doi     = {10.1137/110852103}
}

@Article{Poliquin1996prox,
  author  = {Poliquin, R. A. and Rockafellar, R. T.},
  title   = {Prox-regular functions in variational analysis},
  journal = {Trans. Amer. Math. Soc.},
  volume  = {348},
  number  = {5},
  pages   = {1805--1838},
  year    = {1996}
}

@Article{Poliquin1998tilt,
  author  = {Poliquin, R. A. and Rockafellar, R. T.},
  title   = {Tilt stability of a local minimum},
  journal = {SIAM J. Optim.},
  volume  = {8},
  number  = {2},
  pages   = {287--306},
  year    = {1998},
  doi     = {10.1137/S1052623496309296}
}

@Article{Drusvyatskiy2013tilt,
  author  = {Drusvyatskiy, Dmitriy and Lewis, Adrian S.},
  title   = {Tilt stability, uniform quadratic growth, and strong metric regularity of the subdifferential},
  journal = {SIAM J. Optim.},
  volume  = {23},
  number  = {1},
  pages   = {256--267},
  year    = {2013},
  doi     = {10.1137/120876551}
}

@Article{Hare2001quadratic,
  author  = {Hare, Warren L. and Poliquin, R. A.},
  title   = {The quadratic sub-{L}agrangian of a prox-regular function},
  journal = {Nonlinear Anal.},
  volume  = {47},
  number  = {2},
  pages   = {1117--1128},
  year    = {2001},
  doi     = {10.1016/S0362-546X(01)00251-6}
}

@Article{Hare2004,
  author  = {Hare, Warren L. and Lewis, Adrian S.},
  title   = {Identifying active constraints via partial smoothness and prox-regularity},
  journal = {J. Convex Anal.},
  volume  = {11},
  number  = {2},
  pages   = {251--266},
  year    = {2004},
  url     = {https://www.heldermann-verlag.de/jca/jca11/jca0406.pdf}
}

@Article{Hare2020chain,
  author  = {Hare, Warren and Planiden, Chayne and Sagastiz{\'a}bal, Claudia},
  title   = {The chain rule for {$\mathcal{VU}$}-decompositions of nonsmooth functions},
  journal = {J. Convex Anal.},
  volume  = {27},
  number  = {1},
  pages   = {335--360},
  year    = {2020},
  eprint  = {1909.04799},
  archivePrefix = {arXiv},
  doi     = {10.48550/arXiv.1909.04799}
}

@Book{Rockafellar1998,
  author    = {Rockafellar, R. Tyrrell and Wets, Roger J.-B.},
  title     = {Variational Analysis},
  publisher = {Springer},
  address   = {Berlin},
  year      = {1998},
  series    = {Grundlehren der mathematischen Wissenschaften},
  volume    = {317}
}

@Article{Liu2020,
  author  = {Liu, Shuai and Eberhard, Andrew and Luo, Yousong},
  title   = {The {$\mathcal{U}$}-Lagrangian, fast track, and partial smoothness of a prox-regular function},
  journal = {Set-Valued Var. Anal.},
  volume  = {28},
  number  = {2},
  pages   = {369--394},
  year    = {2020},
  doi     = {10.1007/s11228-019-00518-z}
}

@Article{Liu2025,
  author  = {Liu, Shuai and Sagastiz{\'a}bal, Claudia and Solodov, Mikhail V.},
  title   = {Proximal gradient {$\mathcal{VU}$}-method with superlinear convergence for nonsmooth convex optimization},
  journal = {SIAM J. Optim.},
  volume  = {35},
  number  = {3},
  pages   = {1601--1629},
  year    = {2025},
  doi     = {10.1137/24M1697001}
}

@InCollection{Liu2019Subdifferential,
  author    = {Liu, Shuai and Sagastiz{\'a}bal, Claudia and Solodov, Mikhail},
  title     = {Subdifferential enlargements and continuity properties of the {$\mathcal{VU}$}-decomposition in convex optimization},
  booktitle = {Nonsmooth Optimization and Its Applications},
  editor    = {Hosseini, Seyedehsomayeh and Mordukhovich, Boris S. and Uschmajew, Andr{\'e}},
  series    = {International Series of Numerical Mathematics},
  volume    = {170},
  pages     = {55--87},
  publisher = {Springer},
  address   = {Cham},
  year      = {2019},
  doi       = {10.1007/978-3-030-11370-4_4}
}

@InCollection{Liu2020VUMethods,
  author    = {Liu, Shuai and Sagastiz{\'a}bal, Claudia},
  title     = {Beyond first order: {$\mathcal{VU}$}-decomposition methods},
  booktitle = {Numerical Nonsmooth Optimization},
  editor    = {Bagirov, Adil M. and Gaudioso, Manlio and Karmitsa, Napsu and M{\"a}kel{\"a}, Marko M. and Taheri, Sona},
  pages     = {297--329},
  publisher = {Springer},
  address   = {Cham},
  year      = {2020},
  doi       = {10.1007/978-3-030-34910-3_9}
}

@Article{Cortes1995,
  author  = {Cortes, Corinna and Vapnik, Vladimir},
  title   = {Support-vector networks},
  journal = {Mach. Learn.},
  volume  = {20},
  number  = {3},
  pages   = {273--297},
  year    = {1995},
  doi     = {10.1023/A:1022627411411}
}

@Article{Mifflin2003pdg,
  author  = {Mifflin, Robert and Sagastiz{\'a}bal, Claudia},
  title   = {Primal-dual gradient structured functions: second-order results; links to epi-derivatives and partly smooth functions},
  journal = {SIAM J. Optim.},
  volume  = {13},
  number  = {4},
  pages   = {1174--1194},
  year    = {2003},
  doi     = {10.1137/S1052623402412441}
}

@Article{Mifflin2005,
  author  = {Mifflin, Robert and Sagastiz{\'a}bal, Claudia},
  title   = {A {$\mathcal{VU}$}-algorithm for convex minimization},
  journal = {Math. Program.},
  volume  = {104},
  number  = {2-3},
  pages   = {583--608},
  year    = {2005},
  doi     = {10.1007/s10107-005-0630-3}
}

@Article{Miller2005,
  author  = {Miller, Stephen A. and Malick, J{\'e}r{\^o}me},
  title   = {Newton methods for nonsmooth convex minimization: connections among {$U$}-Lagrangian, {R}iemannian {Newton} and {SQP} methods},
  journal = {Math. Program.},
  volume  = {104},
  number  = {2-3},
  pages   = {609--633},
  year    = {2005},
  doi     = {10.1007/s10107-005-0626-z}
}

@Article{Shapiro2003,
  author  = {Shapiro, Alexander},
  title   = {On a class of nonsmooth composite functions},
  journal = {Math. Oper. Res.},
  volume  = {28},
  number  = {4},
  pages   = {677--692},
  year    = {2003},
  doi     = {10.1287/moor.28.4.677.20512}
}

@Book{Dontchev2014,
  author    = {Dontchev, Asen L. and Rockafellar, R. Tyrrell},
  title     = {Implicit Functions and Solution Mappings},
  subtitle  = {A View from Variational Analysis},
  edition   = {2},
  publisher = {Springer},
  address   = {New York},
  year      = {2014},
  series    = {Springer Series in Operations Research and Financial Engineering},
  doi       = {10.1007/978-1-4939-1037-3}
}
\end{document}